\newtheorem{theorem}{Theorem}[section]
\newtheorem{lemma}[theorem]{Lemma}
\theoremstyle{corollary}
\theoremstyle{proposition}
\newtheorem{proposition}[theorem]{Proposition}
\theoremstyle{definition}
\newtheorem{definition}[theorem]{Definition}
\theoremstyle{remark}
\numberwithin{equation}{section}
\newcommand{\comment}[1]{}
\begin{document}

\title [Rough Hausdorff operators on the Heisenberg group]{Weighted norm inequalities for rough Hausdorff operator and its commutators
\\
on the Heisenberg group}

\thanks{This paper is funded by Vietnam National Foundation for Science and Technology Development (NAFOSTED)}

\author{Nguyen Minh Chuong}
\address{Institute of mathematics, Vietnamese Academy of Science and Technology, Hanoi, Vietnam}
\email{nmchuong@math.ac.vn}

\author{Dao Van Duong}
\address{School of Mathematics, Mientrung University of Civil Engineering, Phu Yen, Vietnam}
\email{daovanduong@muce.edu.vn}

\author{Kieu Huu Dung}
\address{School of Mathematics, University of Transport and Communications, Ha Noi, Vietnam}
\email{khdung@utc2.edu.vn}
\keywords{Rough Hausdorff operator, commutator, Morrey-Herz space, central BMO space, Heisenberg group, $A_p$ weight.}
\subjclass[2010]{42B25, 42B99, 26D15}
\begin{abstract}
The aim of this paper is to study the sharp bounds of rough Hausdorff operators on the product of Herz, central Morrey and Morrey-Herz spaces with both power weights and  Muckenhoupt weights on the Heisenberg group. Especially, by applying the block decomposition of the Herz space, we obtain the boundedness of rough Hausdorff  operator in the case $0<p<1$. In addition,  the boundedness for the commutators of rough Hausdorff operators on such spaces with symbols in weighted central BMO space is also established.
\end{abstract}

\maketitle

\section{Introduction}
Let $\Phi$ be an integrable function on the positive half-line. 
The one dimensional Hausdorff operator associated to the function $\Phi$ is then defined by
$$ H_{\Phi}(f)(x)=\int\limits_{0}^\infty{\frac{\Phi(y)}{y}f\left(\frac{x}{y}\right)dy}.$$ 
 The Hausdorff operator may be originated by Hurwitz and Silverman \cite{Hurwitz} in order to study summability of number series (see also \cite{Hausdorff}). It is well known that the Hausdorff operator is one of important operators in harmonic analysis, and it is used to solve certain classical problems in analysis (see, for instance, \cite{Andersen2}, \cite{BM}, \cite{CFL2012}, \cite{Liflyand1}, \cite{Liflyand2} and the references therein). 
\vskip 5pt
In 2002, Brown and M\'{o}ricz \cite{BM} extended the study of  Hausdorff operator to the high dimensional space.
Given $\Phi$ be a locally integrable function on $\mathbb R^n$, the $n$-dimensional Hausdorff operator $H_{\Phi,A}$ associated to the kernel function $\Phi$ is then defined in terms of the integral form as follows
\begin{equation}\label{Hausdorff1}
H_{\Phi, A}(f)(x)=\int\limits_{\mathbb R^n}{\frac{\Phi(y)}{|y|^n}f(A(y) x)dy},\,x\in\mathbb R^n,
\end{equation}
where $A(y)$ is an $n\times n$ invertible matrix for almost everywhere $y$ in the support of $\Phi$. 
It is worth pointing out that if the kernel function $\Phi$ is taken appropriately, then  the Hausdorff operator reduces to many classcial operators in analysis such as the Hardy operator, the Ces\`{a}ro operator, the Riemann-Liouville fractional integral operator and the Hardy-Littlewood average operator (see, e.g., \cite{Andersen1}, \cite{Chuong2016},  \cite{CDH2016}, \cite{Christ}, \cite{Dzherbashyan} \cite{FGLY2015}, \cite{Miyachi}, \cite{Moricz2005}, \cite{Xiao} and references therein).
\vskip 5pt
In  2012,  Chen,  Fan and  Li \cite{CFL2012} introduced the rough Hausdorff operator on $\mathbb R^n$ which is defined as follows
\begin{equation}\label{RoughHausdorf}
{\widetilde {H}}_{\Phi,\Omega}(f)(x)=\int_{\mathbb R^n}\dfrac{\Phi(x|y|^{-1})}{|y|^n}\Omega(y')f(y)dy,\;\;x\in\mathbb R^n,
\end{equation}
where $y'=\frac{y}{|y|}$.  It is useful to remark that if $\Phi$ is a radial function, then for $\Phi(t)=t^{-n}\chi_{(1,\infty)}(t)$ and $\Omega \equiv 1$, the rough Hausdorff operator $\widetilde {H}_{\Phi,\Omega}$ reduces to the famous Hardy operator
 \begin{equation}\label{hardyoperator}
{{\mathcal H}}(f)(x)=\frac{1}{|x|^n}\int_{|y|\leq |x|}f(y)dy.
\end{equation}
Also, if  $\Omega \equiv 1$ and $\Phi(t)=\chi_{(0, 1)}(t)$, then $\widetilde {H}_{\Phi,\Omega}$ reduces to the adjoint Hardy operator 
 \begin{equation}\label{hardyoperator}
{{\mathcal H^\star}}(f)(x)=\int_{|y|> |x|} \frac{f(y)}{|y|^n}dy.
\end{equation}
\vskip 5pt
It is important to see that Hausdorff operator is focused on the $n$-dimensional Euclidean spaces $\mathbb R^n$ (see \cite{Andersen2}, \cite{CFL2012}, \cite{CDH2016}, \cite{Christ}, \cite{FGLY2015}, \cite{Fu}, \cite{Miyachi}, \cite{Moricz2005}, \cite{Tang}).  It is of interest to know whether the techniques for the investigation of Hausdorff operator in $\mathbb R^n$ can be used in different underlying spaces.  As it is known, there are many works studying on $p$-adic Hardy operator, Hardy-Littlewood average operator, $p$-adic Hardy-Ces\`{a}ro operator, $p$-adic Hausdorff operator as well as some their applications (see \cite{Chuong2016}, \cite{Chuongduong}, \cite{FWL2013}, \cite{Hung2014},  \cite{Rim},  \cite{Volosivets3} and references therein).
\vskip 5pt
It is also interesting to see that the theory of the Heisenberg group plays an important role in several branches of mathematics such as harmonic analysis, representation theory, several complex analysis, partial differential equation and quantum mechanics. For further readings on the theory of the Heisenberg group as well as its deep applications, one may find in the famous books \cite{Stein1993} and \cite{Thangavelu1998} for more details.  Let us first recall some basic about the Heisenberg group. The Heisenberg group $\mathbb H^n$ is a non-commutative nilpotent Lie group associated with underlying manifold
$\mathbb R^{2n}\times \mathbb R$ with group law as follows: 
for $x = (x_1, . . ., x_{2n}, x_{2n+1})$, $y = (y_1,..., y_{2n}, y_{2n+1})$ $\in \mathbb R^{2n}\times\mathbb R$,
$$
x\cdot y = \Big(x_1 + y_1, x_2 + y_2, . . ., x_{2n} + y_{2n}, x_{2n+1} + y_{2n+1} + 2 \sum\limits_{j=1}^n(y_jx_{n+j}- x_jy_{n+j})\Big).
$$
It is remarkable that this multiplication is non-commutative. It is obvious that by the definition, the identity element on the Heisenberg group is $0\in \mathbb R^{2n+1}$, and the reverse element of $x$ is $-x$. The vector fields
$$ X_j= \frac{\partial}{\partial {x_j}}+2x_{n+j}\frac{\partial}{\partial {x_{2n+j}}},\;\;\;j=1,...,n,$$
$$ X_{n+j}= \frac{\partial}{\partial {x_{n+j}}}-2x_{j}\frac{\partial}{\partial {x_{2n+1}}},\;\;\;j=1,...,n,$$
$$ X_{2n+1}= \frac{\partial}{\partial {x_{2n+1}}},$$
form a natural basis for the Lie algebra of left invariant vector fields. The only nontrivial commutator relations are
$$ [X_j, X_{n+j}]=-4X_{2n+1},\;\;j=1,2,...,n. $$
The Heisenberg group $\mathbb H^n$ is a space of homogeneous type in the sense of Coifman and Weiss (see \cite{Coifman}) with dilations
$$
\delta_rx = (rx_1, rx_2, . . ., rx_{2n}, r^2x_{2n+1}), \,\text{ for all }r > 0.
$$
For any measurable set $E \subseteq\mathbb H^n$, let us denote by $|E|$ the measure of $E$.  Then, one has
$$
|\delta_r(E)| = r^Q|E|,\,\,\, d(\delta_rx) = r^Qdx,
$$
where $Q = 2n + 2$ is the so-called homogeneous dimension. And, we have the norm
$$
|x|_h =\left(\Big(\sum\limits_{i=1}^{2n}x_i^2\Big)^2+ x^2_{2n+1}\right)^{\frac{1}{4}}.
$$
The distance on the Heisenberg group $\mathbb H^n$ derived from the norm above
is defined by
$$
d(x,y) = |y^{-1}\cdot x|_h,
$$
where $y^{-1}$ is the inverse of $y$. For $x\in\mathbb H^n, r > 0$, the ball with center $x$ and radius $r$ on $\mathbb H^n$ is given by
$$
B(x, r) =\{y\in\mathbb H^n : d(x,y) < r\},
$$
$$ B'(x, r) =\{y\in\mathbb H^n : d(x,y) \leq r\},$$
and its sphere is defined by
$$
S(x, r) = \{y\in\mathbb H^n : d(x,y) = r\}.
$$
It is evident that
$$
|B(x,r)| = |B(0,r)| = \nu_Qr^Q,
$$
where $\nu_Q$ is the volume of the unit ball $B(0,1)$ on $\mathbb H^n$ with
$$
\nu_Q =\dfrac{2\pi^{n+ 1/2}\Gamma(\frac{n}{2})}{\Gamma(n + 1)\Gamma(\frac{n+1}{2})}.
$$
The unit sphere $S(0,1)$ is often simply denoted by $S_{Q-1}$, and the area of $S_{Q-1}$ is $\omega_Q = Q\nu_Q.$
The reader is referred to \cite{Thangavelu1998} for more details.
\vskip 5pt
In recent years, many mathematicians have been interested in studying the Hardy operators, weighted Hardy operators and Hausdorff operators in the setting of the Heisenberg group (see \cite{Chu}, \cite{Guo}, \cite{RFW2017}, \cite{RFW2017-1}, \cite{Wu2016}, \cite{WF2017}). More details,  Ruan, Fan and Wu \cite{RFW2017} introduced and studied Hausdorff type operator on the Heisenberg group defined as follows
\begin{align}
H_{\Phi, A}(f)(x)=\int\limits_{\mathbb H^n}{\frac{\Phi(y)}{|y|_h^Q}f(A(y) x)dy},\,x\in\mathbb H^n.
\end{align} 
In addition, the fractional Hausdorff operator on $\mathbb H^n$ \cite{WF2017} is also defined by
\begin{align}
T_{\Phi,\beta}(f)(x)=\int\limits_{\mathbb H^n}{\frac{\Phi(\delta_{|y|_h^{-1}}x)}{|y|_h^{Q-\beta}}f(y)dt},\,0\leq \beta\leq Q,
\end{align} 
an  the boundedness of $T_{\Phi,\beta}$ on Hardy spaces is obtained.
\vskip 5pt
Now, it is actually natural to try to extend and study the rough type Hausdorff operator in the setting of the Heisenberg group. Namely, we have the following definition for rough Hausdorff operator on $\mathbb H^n$.
\begin{definition}
Let $\Phi:\mathbb H^n\longrightarrow [0, \infty) $ be a radial measurable function, and let $\Omega:S_{Q-1}\longrightarrow\mathbb C $ be a measurable function such that $\Omega(y)\neq 0$ for almost everywhere $y$ in $S_{Q-1}$.  Let $f$ be measurable complex-valued functions on $\mathbb H^n$. The rough Hausdorff operator  is then defined by
\begin{equation}\label{RH}
{\mathcal H}_{\Phi,\Omega}(f)(x)=\int_{\mathbb H^n} \dfrac{\Phi(\delta_{|y|_h^{-1}}x)}{|y|_h^{Q}}\Omega(\delta_{|y|_h^{-1}}y)f(y)dy,\;\;x\in\mathbb H^n.
\end{equation}
Remark that using polar coordinates in the Heisenberg group, we have 
\begin{equation}\label{RH1}
{\mathcal H}_{\Phi,\Omega}(f)(x)=\int_{0}^{\infty}\int_{S_{Q-1}} \dfrac{\Phi(t)}{t}\Omega(y')f(\delta_{t^{-1}|x|_h}y')dy'dt,\;\;x\in\mathbb H^n.
\end{equation}
\end{definition} 
It turns out  that for $\Phi(t)=t^{-Q}\chi_{(1,\infty)}(t)$ and $\Omega \equiv 1$, the rough Hausdorff operator ${\mathcal H}_{\Phi,\Omega}$ reduces to the Hardy operator on the Heisenberg group. Also, if $\Phi(t)=\chi_{(0, 1)}(t)$ and $\Omega \equiv 1$, then ${\mathcal H}_{\Phi,\Omega}$ reduces to the adjoint Hardy operator. The reader may refer to \cite{Wu2016} for the definition of the Hardy operator and adjoint Hardy operator on the Heisenberg group.
\vskip 5pt
Let $b$ be a locally integrable function. We denote by $\mathcal{M}_b$ the multiplication operator defined  by $\mathcal{M}_bf (x)=b(x) f (x)$ for any measurable function $f$. If $\mathcal{H}$ is a linear operator on some measurable function space, the commutator of Coifman-Rochberg-Weiss type formed by $\mathcal{M}_b$  and $\mathcal{H}$ is defined by $[\mathcal{M}_b, \mathcal{H}]f (x)=(\mathcal{M}_b\mathcal{H}-\mathcal{H}\mathcal{M}_b) f (x)$. Next, let us give the definition for the commutators of Coifman-Rochberg-Weiss type of rough Hausdorff operator on the Heisenberg group.
\begin{definition}
Let $\Phi, \Omega$ be as above. The Coifman-Rochberg-Weiss type commutator of rough Hausdorff operator is defined by
\begin{equation}\label{commuatatorRH}
{\mathcal H}_{\Phi,\Omega}^b(f)(x)=\int_{0}^{\infty}\int_{S_{Q-1}} \dfrac{\Phi(t)}{t}\Omega(y')(b(x)-b(\delta_{t^{-1}|x|_h}y'))f(\delta_{t^{-1}|x|_h}y')dy'dt, 
\end{equation}
where $b$ is locally integrable functions on $\mathbb H^n$  and $x\in\mathbb H^n$.
\end{definition}
\vskip 5pt
Inspired by above mentioned results, the main purpose of this paper is to establish the necessary and sufficient conditions for  the boundedness of ${\mathcal {H}}_{\Phi,\Omega}$ on the product of Herz, central Morrey and Morrey-Herz with both power weights and Muckenhoupt weights in the Heisenberg group. In each case, we estimate the corresponding operator norms. Moreover, we also obtain the boundedness of ${\mathcal H}_{\Phi,\Omega}^b$ on such spaces with symbols in weighted central BMO space.
\vskip 5pt
 Our paper is organized as follows. In Section \ref{2}, we present some notations and definitions of the Herz, central Morrey, Morrey-Herz and central BMO spaces associated with the weights on the Heisenberg group. Our main theorems are given and proved in Section \ref{3} and Section \ref{4}.
\section{Some notations and definitions}\label{2}
Throught the whole paper, we denote by $C$ a positive geometric constant that is independent of the main parameters, but can change from line to line. We also write $a \lesssim b$ to mean that there is a positive constant $C$, independent of the main parameters, such that $a\leq Cb$. The symbol $f\simeq g$ means that $f$ is equivalent to $g$ (i.e.~$C^{-1}f\leq g\leq Cf$). By $\|T\|_{X\to Y}$, we denote the norm of $T$ between two normed vector spaces $X$ and $Y$. Let $L^q_{\omega}(\mathbb{H}^n)$ $(0<q<\infty)$ be the space of all measurable functions $f$ on $\mathbb{H}^n$ such that
\begin{align*}
\|f\|_{L^q_\omega(\mathbb H^n)}=\Big(\,\int_{\mathbb{H}^n}|f(x)|^q\omega(x)dx \Big)^{\frac{1}{q}}<\infty.
\end{align*} 
The space $L^q_\text {loc}(\omega, \mathbb H^n)$ is defined as the set of all measurable functions $f$ on $\mathbb H^n$ satisfying $\int_{K}|f(x)|^q\omega(x)dx<\infty$ for any compact subset $K$ of $\mathbb H^n$. The space $L^q_\text {loc}(\omega, \mathbb H^n\setminus\{0\})$ is also defined in a similar way to the space  $L^q_\text {loc}(\omega, \mathbb H^n)$.
\vskip 5pt
In what follows, we denote $\chi_k=\chi_{C_k}$, $C_k=U_k\setminus U_{k-1}$, where $U_k = \big\{x\in \mathbb H^n: |x|_h \leq 2^k\big\}$ for all $k\in\mathbb Z$. We also denote $B_R=\{x\in\mathbb H^n: |x|_h\leq R\}$
for all $R > 0$.  As usual, the weighted function $\omega$ is a non-negative measurable function on $\mathbb H^n$. Denote by $\omega(K)$ the integral $\int_{K}\omega(x)dx$ for all subsets $K$ of $\mathbb H^n$.
\vskip 5pt
Now, we are in a position to give some notations and definitions of  Morrey, Herz and Morrey-Herz spaces with weights on the Heisenberg group.
\begin{definition} Let $1\leq q<\infty,\frac{-1}{q}<\lambda<0$.
The weighted $\lambda$-central Morrey spaces ${\mathop{B}\limits^.}^{q,\lambda}_{\omega}(\mathbb H^n)$ consists of all measurable functions $f\in L^q_{\omega,\rm loc}(\mathbb H^n\setminus \{0\})$ satisfying
\begin{equation}
\|f\|_{{\mathop{B}\limits^.}^{q,\lambda}_{\omega}(\mathbb H^n)} =\mathop{\rm sup}\limits_{R\in \mathbb R^+}\Big(\dfrac{1}{\omega(B_R)^{1+\lambda q}}\int_{B_R}|f(x)|^q\omega(x)dx\Big)^{1/q}<\infty.
\end{equation}
\end{definition}
\begin{definition} Let $\alpha\in\mathbb R, 0<q<\infty$ and $0<p <\infty$. The weighted Herz space  $K_{q,\omega}^{\alpha,p}(\mathbb H^n)$ is defined as the set of all  measurable functions $f\in L^q_{\omega,\rm loc}(\mathbb H^n\setminus \{0\})$  such that
\begin{equation}
\|f\|_{K_{q,\omega}^{\alpha,p}(\mathbb H^n)} =\Big( \sum\limits_{k=-\infty}^{\infty} 2^{k\alpha p}\|f\chi_k\|_{L^q_\omega(\mathbb H^n)}^{p}\Big)^{1/p}<\infty.
\end{equation}
\end{definition}
We also consider two Herz-type spaces.
\begin{definition} Let $\alpha\in\mathbb R, 0<q<\infty$ and $0<p <\infty$. The weighted Herz space  ${\mathop{K}\limits^{.}}_{q,\omega}^{\alpha,p}(\mathbb H^n)$ is defined as the set of all measurable functions $f\in L^q_{\omega,\rm loc}(\mathbb H^n\setminus \{0\})$  such that $\|f\|_{{\mathop{K}\limits^{.}}_{q,\omega}^{\alpha,p}(\mathbb H^n)}<\infty$,
where
\begin{equation}
\|f\|_{{\mathop{K}\limits^{.}}_{q,\omega}^{\alpha,p}(\mathbb H^n)}=\Big( \sum\limits_{k=-\infty}^{\infty} \omega(U_k)^{\alpha p/Q}\|f\chi_k\|_{L^q_\omega(\mathbb H^n)}^{p}\Big)^{1/p}.
\end{equation}
\end{definition}
\begin{definition} Let $\alpha\in\mathbb R, 0<q<\infty$, $0<p <\infty$ and $\lambda$ be a non-negative real number. The weighted Morrey-Herz space is defined by
\[
MK_{p, q,\omega}^{\alpha,\lambda}(\mathbb H^n)=\left\{f\in L^q_{\omega,\rm loc}(\mathbb H^n\setminus \{0\}):\|f\|_{MK_{p, q,\omega}^{\alpha,\lambda}(\mathbb H^n)}<\infty\right\},
\] 
where
\begin{equation}
\|f\|_{MK_{p, q,\omega}^{\alpha,\lambda}(\mathbb H^n)}=\mathop{\rm sup}\limits_{k_0\in\mathbb Z}2^{-k_0\lambda}\Big( \sum\limits_{k=-\infty}^{k_0} 2^{k\alpha p}\|f\chi_k\|_{L^q_\omega(\mathbb H^n)}^{p}\Big)^{1/p}.
\end{equation}
\end{definition}
Let us recall to define the weighted central BMO space.
\begin{definition}
Let $1\leq q<\infty$ and $\omega$ be a weighted function. The weighted central bounded mean oscillation space ${{CMO}}^q_\omega(\mathbb H^n)$ is defined as the set of all functions $f\in L^q_{\omega,\rm loc}(\mathbb H^n)$ such that
\begin{equation}
\big\|f\big\|_{{{CMO}}^q_\omega(\mathbb H^n)}=\mathop {\rm sup}\limits_{R>0}\Big( \frac{1}{\omega(B_R)}\int\limits_{B_R}{|f(x)-f_{B_R}|^q\omega(x)dx}\Big)^{\frac{1}{q}},
\end{equation}
where 
\[
f_{B_R}=\frac{1}{|B_R|}\int\limits_{B_R}{f(x)dx}.
\]
\end{definition} 
\vskip 5pt
It is well known that the theory of $A_p$ weight was first introduced by Muckenhoupt \cite{Muckenhoupt1972} in the Euclidean spaces in order to characterise  the weighted $L^p$ boundedness of Hardy-Littlewood maximal functions. For $A_p$ weights  on the homogeneous type spaces,  one can refer to the work \cite{HCE2012}. Remark that the Heisenberg group is also a space of homogeneous type space. Next, let us recall some basic properties about $A_p$ weights on the Heisenberg group which are used in the sequel.
\begin{definition} Let $1 < p < \infty$. It is said that a weight $\omega \in A_p(\mathbb H^n)$ if there exists a constant $C$ such that for all balls $B$,
$$\Big(\dfrac{1}{|B|}\int_{B}\omega(x)dx\Big)\Big(\dfrac{1}{|B|}\int_{B}\omega(x)^{-1/(p-1)}dx\Big)^{p-1}\leq C.$$
It is said that a weight $\omega\in A_1(\mathbb H^n)$ if there is a constant $C$ such that for all balls $B$,
$$\dfrac{1}{|B|}\int_{B}\omega(x)dx\leq C\mathop{\rm essinf}\limits_{x\in B}\omega(x).$$
We denote $A_{\infty}(\mathbb H^n) = \bigcup\limits_{1\leq p<\infty}A_p(\mathbb H^n)$.
\end{definition}
\begin{proposition} The following statements are true:
\begin{enumerate}
\item[(i)] $A_p(\mathbb H^n)\subsetneq A_q(\mathbb H^n)$, for $1\leq  p < q < \infty$.
\item[(ii)] If $\omega\in A_p(\mathbb H^n)$, $1 < p < \infty$, then there is an $\varepsilon > 0$ such that $p-\varepsilon > 1$ and $\omega\in A_{p-\varepsilon}(\mathbb H^n)$.
\end{enumerate}
\end{proposition}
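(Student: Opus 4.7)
The plan is to prove (i) by a direct application of Hölder's inequality together with an explicit power-weight example, and (ii) via the reverse Hölder inequality for $A_p$ weights, both of which extend routinely from the Euclidean setting to the Heisenberg group since $\mathbb H^n$ is a space of homogeneous type in the sense of Coifman–Weiss.

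For (i), fix $\omega\in A_p(\mathbb H^n)$ and a ball $B\subset\mathbb H^n$. The factor $\tfrac{1}{|B|}\int_B\omega\,dx$ appearing in the $A_q$ characterization is common to both conditions, so the task reduces to bounding $\bigl(\tfrac{1}{|B|}\int_B\omega^{-1/(q-1)}dx\bigr)^{q-1}$ by $\bigl(\tfrac{1}{|B|}\int_B\omega^{-1/(p-1)}dx\bigr)^{p-1}$. I would apply Hölder with exponent $r=(q-1)/(p-1)>1$ to the integrand $\omega^{-1/(q-1)}\cdot 1$; since $r/(q-1)=1/(p-1)$, this yields exactly the desired comparison, and multiplying by the common average of $\omega$ delivers the $A_q$ estimate. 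For the strict inclusion, I would exhibit the explicit power weight $|x|_h^{\alpha}$, which belongs to $A_p(\mathbb H^n)$ precisely when $-Q<\alpha<Q(p-1)$; choosing $\alpha$ with $Q(p-1)\leq\alpha<Q(q-1)$ then produces a weight in $A_q(\mathbb H^n)\setminus A_p(\mathbb H^n)$.

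For (ii), the standard route is through the reverse Hölder inequality. First observe that if $\omega\in A_p(\mathbb H^n)$ then the dual weight $\sigma:=\omega^{-1/(p-1)}$ satisfies $\sigma\in A_{p'}(\mathbb H^n)\subset A_\infty(\mathbb H^n)$, so there exist $s>1$ and $C>0$ such that $\bigl(\tfrac{1}{|B|}\int_B\sigma^s dx\bigr)^{1/s}\leq C\cdot\tfrac{1}{|B|}\int_B\sigma\,dx$ for every ball $B$. I would then select $\varepsilon\in(0,p-1)$ by the relation $(p-1)/(p-\varepsilon-1)=s$, equivalently $\varepsilon=(p-1)(s-1)/s$; note $\varepsilon<p-1$ automatically. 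With this choice, $\omega^{-1/(p-\varepsilon-1)}=\sigma^{s}$, and raising the reverse Hölder estimate to the power $p-\varepsilon-1=(p-1)/s$ transforms it into $\bigl(\tfrac{1}{|B|}\int_B\omega^{-1/(p-\varepsilon-1)}dx\bigr)^{p-\varepsilon-1}\leq C^{p-1}\bigl(\tfrac{1}{|B|}\int_B\sigma\,dx\bigr)^{p-1}$, which combined with the average of $\omega$ yields the $A_{p-\varepsilon}$ condition.

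The main obstacle is the reverse Hölder inequality on $\mathbb H^n$ itself. A from-scratch proof would require a Calderón–Zygmund decomposition adapted to the homogeneous dimension $Q$ together with good-$\lambda$ control of the Hardy–Littlewood maximal operator on $\mathbb H^n$. However, because $\mathbb H^n$ is a space of homogeneous type, I would simply invoke the general reverse Hölder theorem for $A_\infty$ weights established in that framework (e.g., as recorded in \cite{HCE2012}), which reduces the argument to the elementary manipulations above.
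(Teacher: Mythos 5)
The paper does not actually prove this proposition: it is stated as recalled background, with the reader pointed to the standard literature on $A_p$ weights over spaces of homogeneous type (\cite{LDY2007}, \cite{Stein1993}, \cite{HCE2012}, and \cite{IMS2015} for the $A_\infty$--reverse-H\"older equivalence). Your proposal therefore supplies a proof where the paper offers none, and the argument you give is the standard Euclidean one transplanted to $\mathbb H^n$; it is essentially correct. The H\"older step in (i) with $r=(q-1)/(p-1)$ is right, the duality observation in (ii) that $\sigma=\omega^{-1/(p-1)}\in A_{p'}$ is purely definitional (since $p'-1=1/(p-1)$, the $A_{p'}$ condition for $\sigma$ is the $A_p$ condition for $\omega$ raised to the power $1/(p-1)$), and your exponent bookkeeping $\varepsilon=(p-1)(s-1)/s$, $\omega^{-1/(p-\varepsilon-1)}=\sigma^s$ checks out. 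Invoking the reverse H\"older theorem on spaces of homogeneous type is legitimate and is exactly the route the paper's own references sanction; a from-scratch Calder\'on--Zygmund argument is not needed here.

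Two boundary cases in (i) need repair, though both are one-line fixes. First, your H\"older exponent $r=(q-1)/(p-1)$ is undefined at $p=1$, yet the statement includes $p=1$; for $A_1\subset A_q$ argue directly: the $A_1$ condition gives $\omega(x)^{-1}\leq C\bigl(\tfrac{1}{|B|}\int_B\omega\,dx\bigr)^{-1}$ for a.e.\ $x\in B$, whence $\bigl(\tfrac{1}{|B|}\int_B\omega^{-1/(q-1)}dx\bigr)^{q-1}\leq C\bigl(\tfrac{1}{|B|}\int_B\omega\,dx\bigr)^{-1}$. Second, in your strictness example the prescribed range $Q(p-1)\leq\alpha<Q(q-1)$ contains a bad endpoint when $p=1$: there $\alpha=Q(p-1)=0$ gives the constant weight, which \emph{is} in $A_1$, so for $p=1$ you must take $\alpha$ strictly positive (any $\alpha\in(0,Q(q-1))$ works, by the power-weight characterization $|x|_h^\alpha\in A_1$ iff $-Q<\alpha\leq 0$). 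For $p>1$ the endpoint $\alpha=Q(p-1)$ is fine, since the $A_p$ range is the open interval $(-Q,Q(p-1))$ while $Q(p-1)<Q(q-1)$ places it inside the $A_q$ range.
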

\vskip 5pt
A close relation to $A_{\infty}(\mathbb H^n)$ is the reverse H\"{o}lder condition. If there exist $r > 1$ and a fixed constant $C$ such that
$$\Big(\dfrac{1}{|B|}\int_{B}\omega(x)^rdx\Big)^{1/r}\leq \dfrac{C}{|B|}\int_{B}\omega(x)dx,$$
for all balls $B \subset\mathbb H^n$, we then say that $\omega$ satisfies the reverse H\"{o}lder condition of order $r$ and write $\omega\in RH_r(\mathbb H^n)$. According to Theorem 19 and Corollary 21 in \cite{IMS2015}, $\omega\in A_{\infty}(\mathbb H^n)$ if and
only if there exists some $r > 1$ such that $\omega\in RH_r(\mathbb H^n)$. Moreover, if $\omega\in RH_r(\mathbb H^n)$, $r > 1$, then $\omega\in RH_{r+\varepsilon}(\mathbb H^n)$ for some $\varepsilon > 0$. We thus write $r_\omega \equiv {\rm sup}\{r > 1: \omega\in RH_r(\mathbb H^n)\}$ to denote the critical index of $\omega$ for the reverse H\"{o}lder condition. For further properties of $A_p$ weights, one may find in the books  \cite{LDY2007} and \cite{Stein1993}.
\vskip 5pt
It is useful to note that an important example of $A_p(\mathbb H^n)$ weights is the power function $|x|_h^{\alpha}$. By the similar proofs to Propositions 1.4.3 and 1.4.4 in \cite{LDY2007} , we also obtain the following properties of power weighted functions.
\begin{proposition} The following statements are true:
\begin{itemize}
\item[(i)] $|x|^{\alpha}_h\in A_1(\mathbb H^n)$ if and only if $-Q< \alpha\leq 0$;
\item[(ii)] $|x|^{\alpha}_h\in A_p(\mathbb H^n)$, $1 < p < \infty$, if and only if $-Q < \alpha < Q(p-1)$.
\end{itemize}
\end{proposition}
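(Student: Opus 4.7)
The plan is to verify the defining inequalities of $A_p(\mathbb H^n)$ and $A_1(\mathbb H^n)$ by direct computation in polar coordinates, adapting the classical Euclidean arguments of \cite{LDY2007} to the present setting. The key ingredients are the homogeneity $|B(0,r)| = \nu_Q r^Q$, the identity
$$\int_{B(0,R)}|x|_h^{\beta}\,dx \;=\; \omega_Q\int_{0}^{R} r^{\beta+Q-1}\,dr,$$
which is finite precisely when $\beta > -Q$, and the pointwise comparison $|x|_h \simeq |x_0|_h$ valid on $B(x_0,r)$ whenever $|x_0|_h > 2r$ (a consequence of the triangle inequality for $d$).

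For necessity, I would test the $A_p$/$A_1$ condition on balls centered at the origin. The identity above with $\beta = \alpha$ forces $\alpha > -Q$ in order that $|x|_h^\alpha$ be locally integrable, and for (ii) with $\beta = -\alpha/(p-1)$ forces $\alpha < Q(p-1)$ so that the dual weight is locally integrable as well. For part (i), a direct computation on $B(0,R)$ yields average $\simeq R^\alpha$, while $\mathop{\rm essinf}_{B(0,R)}|x|_h^\alpha$ equals $R^\alpha$ if $\alpha\le 0$ but is $0$ if $\alpha > 0$, so the $A_1$ condition rules out $\alpha > 0$.

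For sufficiency on a general ball $B(x_0,r)$, I would split into the regimes $|x_0|_h \le 2r$ and $|x_0|_h > 2r$. In the second regime, the comparison $|x|_h \simeq |x_0|_h$ makes $|x|_h^\alpha$ essentially constant on $B(x_0,r)$, so both the $A_p$ quotient and the $A_1$ bound are trivially controlled. In the first regime, $B(x_0,r)\subset B(0,3r)$, and the polar-coordinate identity gives
$$\frac{1}{|B(x_0,r)|}\int_{B(x_0,r)}|x|_h^\alpha\,dx \;\lesssim\; \frac{1}{r^Q}\int_{B(0,3r)}|x|_h^\alpha\,dx \;\simeq\; r^{\alpha},$$
provided $\alpha+Q>0$, and an analogous bound $\lesssim r^{-\alpha/(p-1)}$ holds for the dual weight provided $-\alpha/(p-1)+Q>0$. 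Multiplying these two bounds, the powers of $r$ cancel and the $A_p$ quotient is uniformly controlled in exactly the claimed range of $\alpha$.

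The main obstacle I anticipate is the $A_1$ lower bound in the origin-centered regime: one needs $\mathop{\rm essinf}_{B(x_0,r)}|x|_h^\alpha \gtrsim r^\alpha$ when $\alpha \le 0$. Since $|x|_h^\alpha$ is then a nonincreasing function of $|x|_h$, its essential infimum on $B(x_0,r)$ is attained near the outer part of the ball, where $|x|_h \le |x_0|_h + r \le 3r$, giving the required bound. Matched with the $r^\alpha$ upper bound for the average from the previous display, this finishes part (i), and combining the two regimes completes part (ii) and the proposition.
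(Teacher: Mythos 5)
Your proposal is correct and takes essentially the route the paper itself points to: the paper gives no proof but invokes the classical arguments of Propositions 1.4.3 and 1.4.4 in \cite{LDY2007}, which are precisely the ingredients you carry out, namely the polar-coordinate identity $\int_{B(0,R)}|x|_h^{\beta}dx=\omega_Q\int_0^R r^{\beta+Q-1}dr$, the dichotomy between balls with $|x_0|_h\le 2r$ (absorbed into $B(0,3r)$) and $|x_0|_h>2r$ (where $|x|_h\simeq|x_0|_h$ by the triangle inequality for the homogeneous norm $|\cdot|_h$), and testing on origin-centered balls for necessity. All four implications, including the essential-infimum lower bound $\mathop{\rm essinf}_{B(x_0,r)}|x|_h^{\alpha}\gtrsim r^{\alpha}$ for $\alpha\le 0$ in the $A_1$ case, are handled correctly.
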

Let us have the following standard characterization of $A_p$ weights which is the same as the real setting  (see \cite{RFW2017} for more details).
\begin{proposition}\label{pro2.3DFan}
Let $\omega\in A_p(\mathbb H^n) \cap RH_r(\mathbb H^n)$, $p\geq 1$ and $r > 1$. Then there exist constants $C_1, C_2 > 0$ such that
$$
C_1\left(\dfrac{|E|}{|B|}\right)^p\leq \dfrac{\omega(E)}{\omega(B)}\leq C_2\left(\dfrac{|E|}{|B|}\right)^{(r-1)/r}
$$
for any measurable subset $E$  of a ball $B$.
\end{proposition}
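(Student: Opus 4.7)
My plan is to prove the two inequalities independently, using the $RH_r$ condition for the upper bound and the $A_p$ condition for the lower bound. Because $\mathbb{H}^n$ is a space of homogeneous type, the classical Euclidean arguments (see e.g.\ \cite{LDY2007}) transfer verbatim and rely on nothing more than Hölder's inequality together with the defining inequalities of the two weight classes.

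For the upper bound, I would apply Hölder's inequality with exponents $r$ and $r/(r-1)$ to the integral defining $\omega(E)$:
\[
\omega(E)\;=\;\int_E 1\cdot\omega(x)\,dx\;\leq\;|E|^{(r-1)/r}\Big(\int_E \omega(x)^r\,dx\Big)^{1/r}\;\leq\;|E|^{(r-1)/r}\Big(\int_B \omega(x)^r\,dx\Big)^{1/r}.
\]
The reverse Hölder condition then gives $\bigl(|B|^{-1}\int_B\omega^r\bigr)^{1/r}\leq C\,\omega(B)/|B|$, so factoring out $|B|^{1/r}$ and rearranging yields exactly $\omega(E)\leq C_2\,\omega(B)\bigl(|E|/|B|\bigr)^{(r-1)/r}$.

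For the lower bound, I would treat $p=1$ and $p>1$ separately. When $p=1$, the $A_1$ condition $\omega(B)/|B|\leq C\,\mathrm{essinf}_B\,\omega$ combined with $\omega(E)\geq |E|\,\mathrm{essinf}_B\,\omega$ gives the claim with exponent $p=1$ immediately. When $p>1$, I would apply Hölder's inequality with exponents $p$ and $p/(p-1)$ to the identity $|E|=\int_E \omega^{1/p}\cdot\omega^{-1/p}\,dx$, obtaining
\[
|E|^p\;\leq\;\omega(E)\Big(\int_E \omega^{-1/(p-1)}\,dx\Big)^{p-1}\;\leq\;\omega(E)\Big(\int_B \omega^{-1/(p-1)}\,dx\Big)^{p-1}.
\]
The $A_p$ condition, when multiplied through by $|B|^p$, reads $\omega(B)\bigl(\int_B\omega^{-1/(p-1)}\bigr)^{p-1}\leq C\,|B|^p$, and substituting this produces $|E|^p\leq C\,\omega(E)\,|B|^p/\omega(B)$, i.e.\ $\omega(E)/\omega(B)\geq C_1\bigl(|E|/|B|\bigr)^p$.

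There is no real obstacle here beyond careful book-keeping: both steps are pure integral inequalities, and the only thing to check is that the constants produced by $A_p$ and $RH_r$ are uniform in $B$, which is built into the definitions of these classes already recorded in the paper. If desired, one can absorb the $p=1$ case into the $p>1$ argument by observing that $A_1\subset A_p$ for all $p>1$ with comparable constants, but splitting the cases gives a slightly sharper constant at $p=1$.
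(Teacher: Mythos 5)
Your proof is correct and follows the same route as the paper, which states Proposition \ref{pro2.3DFan} without proof as a ``standard characterization'' (citing \cite{RFW2017}): the classical argument is exactly yours --- H\"{o}lder's inequality against the $RH_r$ condition for the upper bound and against the $A_p$ condition for the lower bound, with the $A_1$ case handled via the essential infimum --- and it transfers to $\mathbb H^n$ unchanged since only integral inequalities over a fixed ball $B$ and its subset $E$ are involved. One small caveat on your closing remark: absorbing $p=1$ into the $p>1$ argument via $A_1\subset A_p$ would yield the lower bound only with the \emph{weaker} exponent $p>1$ rather than the required exponent $1$, so the case split you actually perform is necessary, not merely a sharpening of the constant; this does not affect your proof, which treats $p=1$ directly and correctly.
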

\begin{proposition}\label{pro2.4DFan}
If $\omega\in A_p(\mathbb H^n)$, $1 \leq p < \infty$, then for any $f\in L^1_{\rm loc}(\mathbb H^n)$ and any ball $B \subset \mathbb H^n$,
$$
\dfrac{1}{|B|}\int_{B}|f(x)|dx\leq C\left(\dfrac{1}{\omega(B)}\int_{B}|f(x)|^p\omega(x)dx\right)^{1/p}.
$$
\end{proposition}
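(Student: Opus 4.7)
The plan is to handle $p=1$ and $p>1$ separately, in each case reducing to the definition of the $A_p$ condition.

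For the case $p=1$, the $A_1$ definition says $\frac{1}{|B|}\int_B \omega(x)\,dx \leq C\,\mathrm{essinf}_{x\in B}\omega(x)$, which rearranges to $\omega(x) \geq C^{-1}\frac{\omega(B)}{|B|}$ for almost every $x \in B$. Multiplying $|f(x)|$ by the pointwise trivial bound $1 \leq C|B|\omega(B)^{-1}\omega(x)$ and integrating over $B$ then produces the desired inequality immediately.

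For $1 < p < \infty$, the plan is to insert a factor of $\omega(x)^{1/p}\omega(x)^{-1/p}$ in the integrand and apply Hölder's inequality with exponents $p$ and $p'=p/(p-1)$, giving
\begin{equation*}
\int_B |f(x)|\,dx \leq \Bigl(\int_B |f(x)|^p\omega(x)\,dx\Bigr)^{1/p}\Bigl(\int_B \omega(x)^{-1/(p-1)}\,dx\Bigr)^{(p-1)/p}.
\end{equation*}
The $A_p$ condition can then be rewritten as $\bigl(\int_B \omega^{-1/(p-1)}\,dx\bigr)^{p-1}\leq C|B|^p\omega(B)^{-1}$, and raising to the power $1/p$ gives exactly the factor needed to convert the second integral on the right-hand side into $C^{1/p}|B|\omega(B)^{-1/p}$. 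Dividing through by $|B|$ yields the stated inequality.

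No real obstacle is anticipated; this is the standard self-improvement trick for $A_p$ weights, and the proof in the Euclidean setting transfers verbatim to $\mathbb{H}^n$ since it only uses the definition of the $A_p$ class and Hölder's inequality, both of which are available on the Heisenberg group as a space of homogeneous type.
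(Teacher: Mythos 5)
Your proof is correct: the paper states this proposition without proof, as a standard property of $A_p$ weights carried over from the Euclidean setting, and your argument (the pointwise bound $\omega(x)\gtrsim \omega(B)/|B|$ from the $A_1$ condition for $p=1$, and the factorization $|f|=|f|\omega^{1/p}\omega^{-1/p}$ with H\"{o}lder's inequality plus the rewritten $A_p$ condition for $p>1$) is exactly the canonical proof that the cited references rely on. Both steps use only the definition of $A_p(\mathbb H^n)$ and H\"{o}lder's inequality, so the reduction is complete and the constant depends only on the $A_p$ constant of $\omega$, uniformly in $B$ and $f$, as required.
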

\begin{definition}
Let $0<\alpha< \infty$ and $1\leq q <\infty$. A function $b$ on $\mathbb H^n$ is said
to be a $\textit{central}\,(\alpha, q, \omega)-\textit{block}$ if it satisfies
\[
\textit{\rm supp}(b)\subset B(0,r)\,\,\,\textit{\rm and}\,\,\,\|b\|_{L^q_{\omega}(\mathbb H^n)}\leq \omega(B(0,r))^{-\alpha/Q}.
\]
\end{definition}
 The following useful decomposition theorem follows us to show that the central blocks are  the  building blocks of Herz spaces. The proof  is the same as that of Theorem 1.1 in \cite{LY1995}. 
\begin{proposition}\label{blockHerz}
Let $0 <\alpha < \infty, 0 < p <\infty, 1\leq q <\infty$, and let $\omega\in A_1(\mathbb H^n)$.
We then have  $f\in {\mathop{K}\limits^{.}}^{\alpha,p}_{q,\omega}(\mathbb H^n)$  if and only if
\[
f=\sum\limits_{k=-\infty}^{\infty}\lambda_k b_k,
\]
where $\sum\limits_{k=-\infty}^{\infty}{|\lambda_k|^p}<\infty$, and each $b_k$ is a central $(\alpha, q, \omega)$-block with the support
in $U_k$. Moreover,
\[
\|f\|_{{\mathop{K}\limits^{.}}^{\alpha,p}_{q,\omega}(\mathbb H^n)}\simeq \textit{\rm inf}\Big\{\Big(\sum\limits_{k=-\infty}^{\infty}|\lambda_k|^p\Big)\Big\}^{1/p},
\]
where the infimum is taken over all decompositions of f as above.
\end{proposition}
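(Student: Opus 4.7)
The plan is to prove both implications of the equivalence, and along the way obtain the two-sided norm estimate. The easy direction is the "only if" part: given $f\in {\mathop{K}\limits^{.}}^{\alpha,p}_{q,\omega}(\mathbb H^n)$, I would take the canonical decomposition $f=\sum_{k\in\mathbb Z} f\chi_k$, set $\lambda_k:=\omega(U_k)^{\alpha/Q}\|f\chi_k\|_{L^q_\omega(\mathbb H^n)}$, and define $b_k:=\lambda_k^{-1}f\chi_k$ when $\lambda_k\neq 0$ (and $b_k\equiv 0$ otherwise). By construction, $\mathrm{supp}(b_k)\subset C_k\subset U_k$ and $\|b_k\|_{L^q_\omega(\mathbb H^n)}\leq \omega(U_k)^{-\alpha/Q}$, so $b_k$ is a central $(\alpha,q,\omega)$-block, and $\sum_k|\lambda_k|^p=\|f\|_{{\mathop{K}\limits^{.}}^{\alpha,p}_{q,\omega}(\mathbb H^n)}^p$. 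This immediately yields $\inf\bigl(\sum_k|\lambda_k|^p\bigr)^{1/p}\lesssim \|f\|_{{\mathop{K}\limits^{.}}^{\alpha,p}_{q,\omega}(\mathbb H^n)}$.

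For the "if" direction, suppose $f=\sum_{j\in\mathbb Z}\lambda_j b_j$ with $b_j$ a central $(\alpha,q,\omega)$-block supported in $U_j$. Since $b_j$ vanishes on $C_k$ whenever $j<k$, I would write
\[
\|f\chi_k\|_{L^q_\omega(\mathbb H^n)}\leq \sum_{j\geq k}|\lambda_j|\,\|b_j\|_{L^q_\omega(\mathbb H^n)}\leq \sum_{j\geq k}|\lambda_j|\,\omega(U_j)^{-\alpha/Q}.
\]
The key geometric input is the reverse Hölder/$A_1$ estimate of Proposition \ref{pro2.3DFan}: since $\omega\in A_1(\mathbb H^n)\subset A_\infty(\mathbb H^n)$, there exists $r>1$ with $\omega\in RH_r(\mathbb H^n)$, and for $k\leq j$ (so $U_k\subset U_j$) one has $\omega(U_k)/\omega(U_j)\leq C(|U_k|/|U_j|)^{(r-1)/r}=C\,2^{-Q(j-k)(r-1)/r}$. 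Writing $\delta:=\alpha(r-1)/r>0$, this gives $\omega(U_k)^{\alpha/Q}\omega(U_j)^{-\alpha/Q}\leq C\,2^{-\delta(j-k)}$, and therefore
\[
\|f\|_{{\mathop{K}\limits^{.}}^{\alpha,p}_{q,\omega}(\mathbb H^n)}^p\leq C\sum_{k\in\mathbb Z}\Big(\sum_{j\geq k}|\lambda_j|\,2^{-\delta(j-k)}\Big)^p.
\]

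The final step is to bound this double sum by $C\sum_j|\lambda_j|^p$, and I expect the $p<1$ regime to be the main obstacle, handled by splitting into two cases. For $p\geq 1$, Young's inequality applied to the discrete convolution with the summable kernel $2^{-\delta m}\mathbf{1}_{m\geq 0}$ on $\mathbb Z$ gives the conclusion directly. For $0<p<1$, I would instead use the elementary inequality $(\sum x_j)^p\leq \sum x_j^p$ to obtain $\sum_k\sum_{j\geq k}|\lambda_j|^p 2^{-\delta p(j-k)}$, then exchange the order of summation and evaluate the inner geometric sum $\sum_{k\leq j}2^{-\delta p(j-k)}\leq (1-2^{-\delta p})^{-1}$. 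Either way, the resulting bound $\|f\|_{{\mathop{K}\limits^{.}}^{\alpha,p}_{q,\omega}(\mathbb H^n)}\lesssim \bigl(\sum_j|\lambda_j|^p\bigr)^{1/p}$, combined with infimization over admissible decompositions, completes the proof of the equivalence and the norm identification. The delicate point is the uniform decay $2^{-\delta(j-k)}$, which crucially uses that $\alpha>0$ and that $A_1$ weights automatically satisfy a reverse Hölder inequality.
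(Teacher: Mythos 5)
Your proposal is correct and is essentially the argument the paper itself invokes: the paper offers no written proof of Proposition \ref{blockHerz}, deferring to Theorem 1.1 of \cite{LY1995}, and your two steps --- the canonical decomposition $\lambda_k=\omega(U_k)^{\alpha/Q}\|f\chi_k\|_{L^q_\omega(\mathbb H^n)}$, $b_k=\lambda_k^{-1}f\chi_k$ for the easy direction, and for the converse the geometric decay $\bigl(\omega(U_k)/\omega(U_j)\bigr)^{\alpha/Q}\lesssim 2^{-\delta(j-k)}$ extracted from $A_1(\mathbb H^n)\subset A_\infty(\mathbb H^n)$ together with the reverse H\"older bound of Proposition \ref{pro2.3DFan}, followed by Young's convolution inequality when $p\geq 1$ and $(\sum_j x_j)^p\leq \sum_j x_j^p$ when $0<p<1$ --- constitute precisely the Lu--Yang proof transplanted to the Heisenberg setting. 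The only detail worth stating explicitly is the (routine) absolute convergence of $\sum_{j\geq k}\lambda_j b_j$ in $L^q_\omega$ on each annulus $C_k$, which your decay estimate already furnishes.
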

\section{The main results about the boundness of ${\mathcal{H}}_{\Phi,\Omega}$}\label{3}
Now let us give our first main results concerning the boundedness of rough Hausdorff operators on the weighted Morrey spaces on the Heisenberg group.
\begin{theorem}\label{TheoremMorrey}
Let $1< q<\infty$, $\lambda \in \big(\frac{-1}{q},0\big)$, $\gamma\in (-Q,\infty)$, $\omega(x)= |x|^\gamma_h$ and $\Omega\in L^{q'}(S_{Q-1})$. Then,  ${\mathcal H}_{\Phi,\Omega}$ is bounded from ${\mathop B\limits^.}^{q,\lambda}_{\omega}(\mathbb H^n)$ to ${\mathop B\limits^.}^{q,\lambda}_{\omega}(\mathbb H^n)$ if and only if 
$$
\mathcal C_{1}= \int_{0}^{\infty} {\dfrac{\Phi(t)}{t^{1+(Q+\gamma)\lambda}}}dt <  + \infty.
$$
Moreover, ${\big\|{\mathcal H}_{\Phi,\Omega}\big\|_{{\mathop B\limits^.}^{{q},{\lambda}}_{{\omega}}(\mathbb H^n)\to {\mathop B\limits^.}^{q,\lambda }_\omega(\mathbb H^n)}}\simeq \mathcal C_{1}.\|\Omega\|_{L^{q'}(S_{Q-1})}.$
\end{theorem}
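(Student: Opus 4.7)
The plan is to prove the equivalence by establishing matching upper and lower bounds on the operator norm, both of the form $\mathcal{C}_1\|\Omega\|_{L^{q'}(S_{Q-1})}$. For the sufficiency, I would start from the polar representation (\ref{RH1}) and apply Minkowski's integral inequality in $t$ and $y'$ to bring the $L^q_\omega(B_R)$ norm inside, then use H\"older's inequality in $y'$ with the exponent pair $(q',q)$ to extract a factor of $\|\Omega\|_{L^{q'}(S_{Q-1})}$. Switching $x$ to polar coordinates on $\mathbb{H}^n$ and substituting $s=r/t$ with $r=|x|_h$ produces a factor $t^{(Q+\gamma)/q}$ and an inner integral equal to a constant times $\|f\|_{L^q_\omega(B_{R/t})}$. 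Bounding this by the Morrey norm via $\|f\|_{L^q_\omega(B_{R/t})}\leq\|f\|_{\mathop{B}\limits^.^{q,\lambda}_\omega(\mathbb{H}^n)}\omega(B_{R/t})^{1/q+\lambda}$ and using $\omega(B_\rho)\simeq\rho^{Q+\gamma}$ (valid because $\gamma>-Q$) collects the overall $R$-dependence $R^{(Q+\gamma)(1/q+\lambda)}$, which cancels exactly against $\omega(B_R)^{1/q+\lambda}$; the residual $t$-dependence reduces to $t^{-(Q+\gamma)\lambda-1}$, so the remaining $t$-integral is precisely $\mathcal{C}_1$. Taking the supremum over $R>0$ yields $\|\mathcal{H}_{\Phi,\Omega}\|\lesssim\mathcal{C}_1\|\Omega\|_{L^{q'}(S_{Q-1})}$.

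For necessity and the matching lower bound, I would test $\mathcal{H}_{\Phi,\Omega}$ on the dualising function $f(x)=g(x')|x|_h^{(Q+\gamma)\lambda}$, where $x'\in S_{Q-1}$ denotes the angular part of $x$ and $g(x')=\overline{\Omega(x')}|\Omega(x')|^{q'-2}$, which is finite a.e.\ by the hypothesis that $\Omega$ vanishes only on a null set. A direct polar computation gives $\|f\|_{\mathop{B}\limits^.^{q,\lambda}_\omega(\mathbb{H}^n)}\simeq\|g\|_{L^q(S_{Q-1})}=\|\Omega\|_{L^{q'}(S_{Q-1})}^{q'/q}$. Since $f(\delta_{t^{-1}|x|_h}y')=g(y')(t^{-1}|x|_h)^{(Q+\gamma)\lambda}$, the integrals in (\ref{RH1}) decouple and
\[
\mathcal{H}_{\Phi,\Omega}f(x)=|x|_h^{(Q+\gamma)\lambda}\,\|\Omega\|_{L^{q'}(S_{Q-1})}^{q'}\,\mathcal{C}_1,
\]
whose Morrey norm is of order $\|\Omega\|_{L^{q'}(S_{Q-1})}^{q'}\mathcal{C}_1$. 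Taking the ratio of norms and using the identity $q'-q'/q=1$ yields the lower bound $\|\mathcal{H}_{\Phi,\Omega}\|\gtrsim\mathcal{C}_1\|\Omega\|_{L^{q'}(S_{Q-1})}$, which simultaneously forces $\mathcal{C}_1<\infty$ whenever the operator is bounded.

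The main technical obstacle I anticipate is the precise bookkeeping of exponents in the change of variables $s=r/t$: the weight power $\gamma$, the homogeneous dimension $Q$, the Morrey exponent $\lambda$ and the factor $1/q$ must combine so that the $R$-dependences on both sides cancel exactly, leaving $\mathcal{C}_1$ as the only multiplicative constant in $t$. A secondary subtlety is the construction of the dualising test function for the lower bound, which must be non-radial (so that $\Omega$ appears non-trivially inside the $S_{Q-1}$-integration) and which relies on the a.e.\ nonvanishing of $\Omega$ to ensure that $|\Omega|^{q'-2}$ is finite even in the regime $q'<2$.
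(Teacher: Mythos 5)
Your proposal is correct and follows essentially the same route as the paper's own proof: Minkowski's inequality, H\"older with the pair $(q',q)$ to extract $\|\Omega\|_{L^{q'}(S_{Q-1})}$, the polar-coordinate dilation argument giving the factor $t^{(Q+\gamma)/q}$ and $\|f\|_{L^q_\omega(B_{t^{-1}R})}$, and for necessity exactly the paper's test function $f(x)=|x|_h^{(Q+\gamma)\lambda}|\Omega(\delta_{|x|_h^{-1}}x)|^{q'-2}\overline{\Omega}(\delta_{|x|_h^{-1}}x)$, with the identity $q'-q'/q=1$ closing the norm ratio. The exponent bookkeeping you flagged does work out exactly as you describe, and since $\Phi\geq 0$ and $\Omega\,\overline{\Omega}|\Omega|^{q'-2}=|\Omega|^{q'}\geq 0$, the pointwise evaluation of $\mathcal{H}_{\Phi,\Omega}f$ is legitimate even before knowing $\mathcal{C}_1<\infty$, so your necessity argument is sound.
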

\begin{proof}
The first step is to prove the sufficient condition of this theorem.
For $R\in\mathbb R^+$, by the Minkowski inequality and the H\"{o}lder inequality we get
\begin{align}\label{HfMorrey0}
\|{\mathcal H}_{\Phi,\Omega}(f)\|_{L^q_\omega(B_R)}&\leq \int_{0}^{\infty}\dfrac{\Phi(t)}{t}\Big(\int_{B_R}\Big|\int_{S_{Q-1}}\Omega(y') f(\delta_{t^{-1}|x|_h}y')\omega^{1/q}(x)dy'\Big|^qdx\Big)^{1/q}dt\nonumber
\\
&\leq \int_{0}^{\infty}\dfrac{\Phi(t)}{t}\Big(\int_{S_{Q-1}}|\Omega(y)|\Big(\int_{B_R}|f(\delta_{t^{-1}|x|_h}y')|^q\omega(x)dx\Big)^{1/q}dy'\Big)dt\nonumber.
\\
&\leq \|\Omega\|_{L^{q'}(S_{Q-1})}\int_{0}^{\infty}\dfrac{\Phi(t)}{t}\Big(\int_{S_{Q-1}}\int_{B_R}|f(\delta_{t^{-1}|x|_h}y')|^q\omega(x)dxdy'\Big)^{1/q}dt\nonumber.
\\
\end{align}
On the other hand, by applying polar coordinates, we have
\begin{align}\label{S0BR}
&\int_{S_{Q-1}}\int_{B_R}|f(\delta_{t^{-1}|x|_h}y')|^q\omega(x)dxdy'=\int_{S_{Q-1}}\int_{0}^{R}\int_{S_{Q-1}}|f(\delta_{t^{-1}|\delta_{r}u'|_h}y')|^q|\delta_r u'|_h^{\gamma}r^{Q-1}du'drdy'\nonumber
\\
&=\int_{S_{Q-1}}\int_{0}^{R}\int_{S_{Q-1}}|f(\delta_{t^{-1}r}y')|^q r^{Q-1+\gamma}du'drdy'\simeq \int_{S_{Q-1}}\int_{0}^{R}|f(\delta_{t^{-1}r}y')|^q r^{Q-1+\gamma}drdy'\nonumber
\\
&=t^{Q+\gamma}\int_{0}^{t^{-1}R}\int_{S_{Q-1}}|f(\delta_{z}y')|^q z^{Q-1+\gamma}dzdy'=t^{Q+\gamma}\int_{B_{t^{-1}R}}|f(x)|^q \omega(x)dx.
\end{align}
This implies that
\begin{align}\label{HfLebMorrey}
\|{\mathcal H}_{\Phi,\Omega}(f)\|_{L^q_\omega(B_R)}\lesssim \|\Omega\|_{L^{q'}(S_{Q-1})}\int_{0}^{\infty}\dfrac{\Phi(t)}{t^{1-\frac{Q+\gamma}{q}}}\|f\|_{L^q_\omega(B_{t^{-1}R})}dt.
\end{align}
Next, we need to show that
\begin{align}\label{omegaBt-1R}
\dfrac{1}{\omega(B_R)^{1/q+\lambda}}\simeq  \dfrac{t^{-(Q+\gamma)(1/q+\lambda)}}{\omega (B_{t^{-1}R})^{1/{q}+\lambda}}.
\end{align}
Indeed, we get
\begin{align}\label{omegaBr}
&\omega(B_R)=\int_{B_R}|x|_h^{\gamma}dx=\int_{0}^{R}\int_{S_{Q-1}}|\delta_{ry'}|_h^{\gamma}r^{Q-1}dy'dr=\int_{0}^{R}\int_{S_{Q-1}}r^{Q-1+\gamma}dy'dr\simeq R^{Q+\gamma},
\nonumber
\\
&\textit{\rm and}\,\, \omega(B_{t^{-1}R})\simeq (t^{-1}R)^{Q+\gamma}.
\end{align}
Hence, the proof of estimate (\ref{omegaBt-1R}) is finished.
From (\ref{HfLebMorrey}) and (\ref{omegaBr}), for $R\in\mathbb R$, it is easy to see that
$$
\dfrac{1}{\omega(B_R)^{1/q+\lambda}}\|{\mathcal H}_{\Phi,\Omega}(f)\|_{L^q_\omega(B_R)}\lesssim \|\Omega\|_{L^{q'}(S_{Q-1})}\Big(\int_{0}^{\infty}\dfrac{\Phi(t)}{t^{1+(Q+\gamma)\lambda}}dt\Big)\|f\|_{{\mathop B\limits^.}^{q,\lambda}_{\omega}(\mathbb H^n)}.
$$
Thus, by  the definition of the Morrey space, we obtain
$$\|{\mathcal H}_{\Phi,\Omega}(f)\|_{{\mathop B\limits^.}^{q,\lambda}_{\omega}(\mathbb H^n)}\lesssim \mathcal C_1.\|\Omega\|_{L^{q'}(S_{Q-1})}\|f\|_{{\mathop B\limits^.}^{q,\lambda}_{\omega}(\mathbb H^n)},
$$
which means that the operator ${\mathcal H}_{\Phi,\Omega}$ is bounded on ${\mathop B\limits^.}^{q,\lambda}_{\omega}(\mathbb H^n)$.
\vskip5pt
Now, the second step is to  assume that  ${\mathcal H}^p_{\Phi,\Omega}$ is bounded from ${\mathop B\limits^.}^{q,\lambda}_{\omega}(\mathbb H^n)$ to ${\mathop B\limits^.}^{q,\lambda}_{\omega}(\mathbb H^n)$. Let us choose an appropriately radial function as follows
$$
f(x)=|x|_h^{(Q+\gamma)\lambda}.|\Omega(\delta_{|x|_h^{-1}}x)|^{q'-2}.\overline{\Omega}(\delta_{|x|_h^{-1}}x),\,\textit{\rm for}\, x\in \mathbb H^n\setminus\{0\}.
$$
Then, we have
\begin{align}\label{normfi}
\|f\|^{q}_{L^{q}_{\omega}(B_R)}&=\int_{B_R}|x|_h^{(Q+\gamma)\lambda q+\gamma}.|\Omega(\delta_{|x|_h^{-1}}x)|^{q'}dx\nonumber
\\
&=\int_{0}^{R}\int_{S_{Q-1}}|\delta_ry'|_h^{(Q+\gamma)\lambda q+\gamma}.|\Omega(\delta_{|\delta_ry'|_h^{-1}}\delta_ry')|^{q'}r^{Q-1}dy'dr
\nonumber
\\
&=\|\Omega\|_{L^{q'}(S_{Q-1})}^{q'}\int_{0}^{R}r^{(Q+\gamma)\lambda q+\gamma+Q-1}dr\nonumber\\
&\simeq \|\Omega\|_{L^{q'}(S_{Q-1})}^{q'}R^{(Q+\gamma)(\lambda q+1)} .
\end{align}
Hence, by (\ref{omegaBr}) and (\ref{normfi}), one has
\begin{align}\label{fESMorrey}
0<\|f\|_{{\mathop B\limits^.}^{q,\lambda}_{\omega}(\mathbb H^n)}\simeq \|\Omega\|_{L^{q'}(S_{Q-1})}^{q'/q}<\infty.
\end{align}
 By taking $f$ as above, we have
\begin{align}
{\mathcal H}_{\Phi,\Omega}(f)(x)&=\int_{0}^{\infty}\int_{S_{Q-1}}\dfrac{\Phi(t)}{t}\Omega(y')|\delta_{t^{-1}|x|_h}y'|_h^{(Q+\gamma)\lambda}|\Omega(y')|^{q'-2}\overline{\Omega}(y')dy'dr\nonumber
\\
&=\|\Omega\|_{L^{q'}(S_{Q-1})}^{q'}\Big(\int_{0}^{\infty}\dfrac{\Phi(t)}{t^{1+(Q+\gamma)\lambda}}dt\Big)|x|_h^{(Q+\gamma)\lambda},\nonumber
\end{align}
which implies  that
$$
\|{\mathcal H}_{\Phi,\Omega}(f)\|_{{\mathop B\limits^.}^{q,\lambda}_{\omega}(\mathbb H^n)}= \mathcal C_1.\|\Omega\|_{L^{q'}(S_{Q-1})}^{q'}\||x|_h^{Q+\gamma}\|_{{\mathop B\limits^.}^{q,\lambda}_{\omega}(\mathbb H^n)}.
$$
By estimating as above, we also have $\||x|_h^{Q+\gamma}\|_{{\mathop B\limits^.}^{q,\lambda}_{\omega}(\mathbb H^n)}\simeq 1.$ Thus, by (\ref{fESMorrey}), it follows immediately that
$$
\|{\mathcal H}_{\Phi,\Omega}(f)\|_{{\mathop B\limits^.}^{q,\lambda}_{\omega}(\mathbb H^n)}\gtrsim \mathcal C_1.\|\Omega\|_{L^{q'}(S_{Q-1})}^{q'}\dfrac{\|f\|_{{\mathop B\limits^.}^{q,\lambda}_{\omega}(\mathbb H^n)}}{\|\Omega\|_{L^{q'}(S_{Q-1})}^{q'/q}}=\mathcal C_1.\|\Omega\|_{L^{q'}(S_{Q-1})}\|f\|_{{\mathop B\limits^.}^{q,\lambda}_{\omega}(\mathbb H^n)},
$$
As a consequence, we obtain that $\mathcal C_1 < \infty$, and the proof of the theorem is completed.
\end{proof}
\begin{theorem}\label{TheoremMorrey1}
Let $1\leq q^*,q, \zeta<\infty$, $\lambda\in (\frac{-1}{q^*},0)$, $\Omega\in L^{({\frac{q^*}{\zeta}})'}(S_{Q-1})$ and $\omega\in A_{\zeta}$ with the finite critical index $r_{\omega}$ for the reverse H\"{o}lder condition.
\\
If $q^*>q\zeta r_{\omega}/(r_{\omega}-1)$, $\delta\in (1,r_\omega)$ and 
 $$\mathcal C_2=\int_{1}^{\infty}\dfrac{\Phi(t)}{t^{1+Q\lambda}}dt+\int_{0}^{1}\dfrac{\Phi(t)}{t^{1+Q\lambda(\delta-1)/\delta}}dt<\infty,
 $$
then the operator ${\mathcal H}_{\Phi,\Omega}$ is bounded from ${\mathop B\limits^.}^{q^*,\lambda}_{\omega}(\mathbb H^n)$ to ${\mathop B\limits^.}^{q,\lambda}_{\omega}(\mathbb H^n)$.
\end{theorem}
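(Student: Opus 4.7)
The plan is to adapt the Minkowski-plus-H\"older scheme of Theorem~\ref{TheoremMorrey} to the general $A_\zeta$ setting. First I apply Minkowski's integral inequality to move the $t$-integration outside the $L^q_\omega(B_R)$ norm, so that it suffices to estimate
\[
I(t,R) := \Big\|\int_{S_{Q-1}}\Omega(y')f(\delta_{t^{-1}|\cdot|_h}y')\,dy'\Big\|_{L^q_\omega(B_R)}.
\]
Since the hypothesis $q^*>q\zeta r_\omega/(r_\omega-1)\ge q$ forces $q^*>q$, H\"older in $x$ against $\omega\,dx$ with exponents $q^*/q$ and its conjugate converts the $L^q_\omega$ norm into an $L^{q^*}_\omega$ norm at the cost of a factor $\omega(B_R)^{1/q-1/q^*}$. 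A subsequent Minkowski in $y'$ extracts $|\Omega(y')|$, and H\"older in $y'$ with the paired exponents $(q^*/\zeta)'$ and $q^*/\zeta$ absorbs $\|\Omega\|_{L^{(q^*/\zeta)'}(S_{Q-1})}$, leaving the integrated sphere norm $\bigl(\int_{S_{Q-1}}\|f(\delta_{t^{-1}|\cdot|_h}y')\|_{L^{q^*}_\omega(B_R)}^{q^*/\zeta}\,dy'\bigr)^{\zeta/q^*}$.

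The central step is to recast this quantity in terms of $\|f\|_{L^{q^*}_\omega(B_{t^{-1}R})}$. Using polar coordinates on $\mathbb H^n$, Fubini, and the change of variables $z=\delta_{t^{-1}|x|_h}y'$, the combined $(x,y')$-integration yields the identity
\[
\int_{B_R}\int_{S_{Q-1}}|f(\delta_{t^{-1}|x|_h}y')|^{q^*}\,dy'\,\omega(x)\,dx = t^{Q}\int_{B_{t^{-1}R}}|f(z)|^{q^*}\bar\omega(t|z|_h)\,dz,
\]
where $\bar\omega(r):=\int_{S_{Q-1}}\omega(\delta_ru')\,du'$ is the spherical average of $\omega$ at radius $r$. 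A further H\"older step against $\omega\,dz$ with exponents $\zeta$ and $\zeta'$ separates $\|f\|_{L^{q^*}_\omega(B_{t^{-1}R})}^{q^*/\zeta}$ from a residual $\bigl(\int_{B_{t^{-1}R}}\bar\omega(t|z|_h)^{\zeta'}\omega(z)^{-1/(\zeta-1)}\,dz\bigr)^{1/\zeta'}$, and the $A_\zeta$ characterisation of $\omega$ combined with the radial identity $\int_{B_{t^{-1}R}}\bar\omega(t|z|_h)\,dz=|S_{Q-1}|t^{-Q}\omega(B_R)$ controls this residual by a power of the ratio $\omega(B_R)/\omega(B_{t^{-1}R})$.

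Inserting the Morrey estimate $\|f\|_{L^{q^*}_\omega(B_{t^{-1}R})}\leq\omega(B_{t^{-1}R})^{1/q^*+\lambda}\|f\|_{{\mathop B\limits^.}^{q^*,\lambda}_{\omega}(\mathbb H^n)}$ and dividing through by $\omega(B_R)^{1/q+\lambda}$, the Morrey norm of $\mathcal H_{\Phi,\Omega}(f)$ is bounded, up to the factor $\|\Omega\|_{L^{(q^*/\zeta)'}}\|f\|_{{\mathop B\limits^.}^{q^*,\lambda}_{\omega}(\mathbb H^n)}$, by an integral of the form $\int_0^\infty(\Phi(t)/t)\,G(\omega(B_{t^{-1}R})/\omega(B_R))\,dt$ for an explicit power function $G$. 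I split this integral at $t=1$ and apply Proposition~\ref{pro2.3DFan}. For $t\ge 1$, $B_{t^{-1}R}\subset B_R$ and the reverse-H\"older half gives $\omega(B_{t^{-1}R})/\omega(B_R)\lesssim t^{-Q(\delta-1)/\delta}$; combined with the restriction $\lambda>-1/q^*$ this dominates the integrand by $\Phi(t)/t^{1+Q\lambda}$, giving the first summand of $\mathcal C_2$. For $0<t\le 1$, applying Proposition~\ref{pro2.3DFan} to $E=B_R\subset B=B_{t^{-1}R}$ yields $\omega(B_R)/\omega(B_{t^{-1}R})\gtrsim t^{Q(\delta-1)/\delta}$, producing the bound $\Phi(t)/t^{1+Q\lambda(\delta-1)/\delta}$ and hence the second summand. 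Taking the supremum over $R>0$ finishes the argument.

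The main obstacle is the control of the spherical-average weight $\bar\omega(t|z|_h)$ produced by the change of variables. Unlike the power-weight situation of Theorem~\ref{TheoremMorrey}, where $\bar\omega$ and $\omega$ are trivially comparable, here they genuinely differ, and the residual integral has to be handled via a delicate combination of both halves of Proposition~\ref{pro2.3DFan}. The precise hypothesis $q^*>q\zeta r_\omega/(r_\omega-1)$ together with the freedom to choose $\delta\in(1,r_\omega)$ enter precisely at this step to align the resulting power of $\omega(B_R)/\omega(B_{t^{-1}R})$ with the exponents $Q\lambda$ and $Q\lambda(\delta-1)/\delta$ appearing in $\mathcal C_2$.
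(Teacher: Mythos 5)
Your central step fails exactly at the point you flag as the main obstacle. After keeping the weight through the H\"older step in $x$ and the change of variables, you arrive at $\int_{B_{t^{-1}R}}|f(z)|^{q^*}\bar\omega(t|z|_h)\,dz$ and propose to split off $\|f\|_{L^{q^*}_\omega(B_{t^{-1}R})}^{q^*/\zeta}$ by H\"older with exponents $(\zeta,\zeta')$, leaving the $f$-free residual $\bigl(\int_{B_{t^{-1}R}}\bar\omega(t|z|_h)^{\zeta'}\omega(z)^{-1/(\zeta-1)}dz\bigr)^{1/\zeta'}$. That split is exponent-inconsistent: the H\"older factorization that produces exactly your residual is $|f|^{q^*}\bar\omega=\bigl(|f|^{q^*\zeta}\omega\bigr)^{1/\zeta}\cdot\bar\omega\,\omega^{-1/\zeta}$, whose first factor integrates to $\|f\|_{L^{q^*\zeta}_\omega}^{q^*}$, not $\|f\|_{L^{q^*}_\omega}^{q^*/\zeta}$ (the $f$-degree on the left is $q^*$, on your right-hand side $q^*/\zeta$); any split retaining only the $L^{q^*}_\omega$ norm must leave $f$ inside the residual. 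Moreover, even the $f$-free residual is not controllable as claimed: the $A_\zeta$ condition bounds $\int_B\omega^{-1/(\zeta-1)}$ against $|B|$ and $\omega(B)$, but $\bar\omega(t|z|_h)^{\zeta'}$ is not constant on $B_{t^{-1}R}$, and for a non-radial $A_\zeta$ weight there is no pointwise comparison between the spherical average $\bar\omega(t|z|_h)$ and $\omega(z)$; your radial identity $\int_{B_{t^{-1}R}}\bar\omega(t|z|_h)dz=|S_{Q-1}|t^{-Q}\omega(B_R)$ controls only the first power of $\bar\omega$, not its $\zeta'$-th power against $\omega^{-1/(\zeta-1)}$.

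The paper circumvents the weight-transport problem entirely, and this is where the hypothesis $q^*>q\zeta r_\omega/(r_\omega-1)$ actually enters (not at the final asymptotics, as you assert): it guarantees an $r\in(1,r_\omega)$ with $q^*/\zeta=qr'$, so H\"older in $x$ with exponents $(r',r)$ together with the reverse H\"older inequality $\omega\in RH_r$ strips the weight,
\begin{equation*}
\Big(\int_{B_R}|f(\delta_{t^{-1}|x|_h}y')|^{q}\omega(x)dx\Big)^{1/q}\lesssim\Big(\int_{B_R}|f(\delta_{t^{-1}|x|_h}y')|^{q^*/\zeta}dx\Big)^{\zeta/q^*}\omega(B_R)^{1/q}|B_R|^{-\zeta/q^*};
\end{equation*}
the dilation computation is then performed with unweighted integrals, where it is exact because Lebesgue measure is homogeneous, and the weight is reattached on $B_{t^{-1}R}$ via Proposition \ref{pro2.4DFan}, yielding the factor $\omega(B_R)^{1/q}\omega(B_{t^{-1}R})^{-1/q^*}\|f\|_{L^{q^*}_\omega(B_{t^{-1}R})}$. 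Your final step also has direction errors in applying Proposition \ref{pro2.3DFan}: since $\lambda<0$, bounding $(\omega(B_{t^{-1}R})/\omega(B_R))^\lambda$ from above for $t\ge1$ requires a \emph{lower} bound on the ratio, i.e.\ the $A_\zeta$ half, which gives $t^{-Q\zeta\lambda}$, not the reverse-H\"older half and $t^{-Q\lambda}$ as you use (the paper's own proof indeed ends with $\int_1^\infty\Phi(t)t^{-1-Q\zeta\lambda}dt$, which agrees with the first term of $\mathcal C_2$ only when $\zeta=1$ --- a mismatch internal to the paper that your argument cannot repair); and for $t\le1$ the correct inequality is $\omega(B_R)/\omega(B_{t^{-1}R})\lesssim t^{Q(\delta-1)/\delta}$, the opposite direction to the $\gtrsim$ you wrote, even though your final exponent there happens to coincide with the paper's.
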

\begin{proof}
It follows from the Minkowski inequality that
\begin{align}\label{HfMor1}
\|{\mathcal H}_{\Phi,\Omega}(f)\|_{L_\omega^{q}(B_R)} &\leq\int_{0}^{\infty}\dfrac{\Phi(t)}{t}\Big(\int_{S_{Q-1}}|\Omega(y')|\Big(\int_{B_R}|f(\delta_{t^{-1}|x|_h}y')|^{q}\omega(x)dx\Big)^{1/{q}}dy'\Big)dt.
\end{align}
By assuming that $q^*>q\zeta r_\omega/(r_\omega-1)$, we have $r\in(1,r_\omega)$ and $\frac{q^*}{\zeta}=q r'$. From this, by combining the H\"{o}lder inequality and the reverse H\"{o}lder condition, it is not hard to obtain that
\begin{align}\label{fq*}
&\Big(\int_{B_R}|f(\delta_{t^{-1}|x|_h}y')|^{q}\omega(x)dx\Big)^{1/{q}}\leq \Big(\int_{B_R}|f(\delta_{t^{-1}|x|_h}y')|^{\frac{q^*}{\zeta}}dx\Big)^{\frac{\zeta}{q^*}}\Big(\int_{B_R}\omega^r(x)dx\Big)^{\frac{1}{rq}}\nonumber
\\
&\,\,\,\lesssim \Big(\int_{B_R}|f(\delta_{t^{-1}|x|_h}y')|^{\frac{q^*}{\zeta}}dx\Big)^{\frac{\zeta}{q^*}}\omega(B_R)^{\frac{1}{q}}|B_R|^{\frac{-\zeta}{q^*}}.
\end{align}
Thus, by (\ref{HfMor1}) and (\ref{fq*}), we infer
\begin{align}\label{HfMor2}
\|{\mathcal H}_{\Phi,\Omega}(f)\|_{L_\omega^{q}(B_R)} &\lesssim \omega(B_R)^{\frac{1}{q}}|B_R|^{\frac{-\zeta}{q^*}}\times
\nonumber
\\
&\,\,\,\times\int_{0}^{\infty}\dfrac{\Phi(t)}{t}\Big(\int_{S_{Q-1}}|\Omega(y')|\Big(\int_{B_R}|f(\delta_{t^{-1}|x|_h}y')|^{\frac{q^*}{\zeta}}dx\Big)^{\frac{\zeta}{q^*}}dy'\Big)dt.
\end{align}
By making  the H\"{o}lder inequality and estimating as (\ref{S0BR}) above, we also get
\begin{align}
&\int_{S_{Q-1}}|\Omega(y')|\Big(\int_{B_R}|f(\delta_{t^{-1}|x|_h}y')|^{\frac{q^*}{\zeta}}dx\Big)^{\frac{\zeta}{q^*}}dy'\nonumber
\\
&\,\,\,\,\,\,\leq\|\Omega\|_{L^{(\frac{q^*}{\zeta})'}(S_{Q-1})}\Big(\int_{S_{Q-1}}\int_{B_R}|f(\delta_{t^{-1}|x|_h}y')|^{\frac{q^*}{\zeta}}dxdy'\Big)^{\frac{\zeta}{q^*}}\nonumber
\\
&\,\,\,\,\,\,\lesssim \|\Omega\|_{L^{(\frac{q^*}{\zeta})'}(S_{Q-1})}t^{\frac{\zeta Q}{q^*}}\|f\|_{L^{q^*/\zeta}(B_{t^{-1}R})}.\nonumber
\end{align}
From Proposition \ref{pro2.4DFan}, one has
$$
\|f\|_{L^{q^*/\zeta}(B_{t^{-1}R})}\lesssim |B_{t^{-1}R}|^{\frac{\zeta}{q^*}}\omega(B_{t^{-1}R})^{\frac{-1}{q^*}}\|f\|_{L_{\omega}^{q^*}(B_{t^{-1}R})}.
$$
A similar argument as (\ref{omegaBr}) above, we have $\dfrac{|B_{t^{-1}R}|}{|B_R)|}\simeq \dfrac{(t^{-1}R)^Q}{R^Q}=t^{-Q}.$
Therefore, by (\ref{HfMor2}), we obtain
\begin{align}\label{HfBRMorrey}
\|\mathcal H_{\Phi,\Omega}(f)\|_{L_{\omega}^{q}(B_R)}\lesssim \|\Omega\|_{L^{(\frac{q^*}{\zeta})'}(S_{Q-1})}\int_{0}^{\infty}\dfrac{\Phi(t)}{t}\dfrac{\omega(B_R)^{\frac{1}{q}}}{\omega(B_{t^{-1}R})^{\frac{1}{q^*}}}\|f\|_{L_{\omega}^{q^*}(B_{t^{-1}R})}dt.
\end{align}
Thus, by the definition of the Morrey space, we get
\begin{align}\label{HfMor3}
\|\mathcal H_{\Phi,\Omega}(f)\|_{\mathop{B}_{\omega}^{q,\lambda}(\mathbb H^n)}&\lesssim \|\Omega\|_{L^{(\frac{q^*}{\zeta})'}(S_{Q-1})}\|f\|_{\mathop{B}_{\omega}^{q^*,\lambda}(\mathbb H^n)}\int_{0}^{\infty}\dfrac{\Phi(t)}{t}\mathop{\rm sup}\limits_{R>0}\Big(\dfrac{\omega(B_{t^{-1}R})}{\omega(B_R)}\Big)^{\lambda}dt.
\end{align}
Next, since $\lambda<0$ and $\delta\in(1,r_\omega)$, by  Proposition \ref{pro2.3DFan}, we deduce
\begin{align}\label{t-1Rlambda}
\Big(\dfrac{\omega(B_{t^{-1}R})}{\omega(B_R)}\Big)^{\lambda}\lesssim \left\{ \begin{array}{l}
\Big(\dfrac{|B_{t^{-1}R}|}{|B_R|}\Big)^{\zeta\lambda} \lesssim t^{-Q\zeta\lambda},\,\,\,\,\,\,\,\,\,\,\,\,\,\,\,\,\,\,\,\,\,\,\,\,\,\,\,\,\,\,\,\textit{\rm if}\,\, t\geq 1,
\\
\\
\Big(\dfrac{|B_{t^{-1}R}|}{|B_R|}\Big)^{\lambda(\delta-1)/\delta} \lesssim t^{-Q\lambda(\delta-1)/\delta},\,\textit{\rm otherwise}.
\end{array} \right.
\end{align}
Consequently, by (\ref{HfMor3}), we have
\begin{align}
\|\mathcal H_{\Phi,\Omega}(f)\|_{\mathop{B}_{\omega}^{q,\lambda}(\mathbb H^n)}&\lesssim \Big(\int_{1}^{\infty}\dfrac{\Phi(t)}{t^{1+Q\zeta\lambda}}dt+\int_{0}^{1}\dfrac{\Phi(t)}{t^{1+Q\lambda(\delta-1)/\delta}}dt\Big)\|\Omega\|_{L^{(\frac{q^*}{\zeta})'}(S_{Q-1})}\|f\|_{\mathop{B}_{\omega}^{q^*,\lambda}(\mathbb H^n)},\nonumber
\end{align}
which ends the proof of this theorem.
\end{proof}
Next, we also give the necessary and sufficient conditions for the boundedness of the rough Hausdorff operator on the weighted Herz spaces on the Heisenberg group.
\begin{theorem}\label{TheoremHerz}
Let  $1\leq p<\infty$, $1< q<\infty$, $\gamma\in\mathbb R$, $\omega(x)= |x|^\gamma_h$ and $\Omega\in L^{q'}(S_{Q-1})$. Then, ${\mathcal H}_{\Phi,\Omega}$ is bounded from ${K}^{\alpha,p}_{q,\omega}(\mathbb H^n)$ to itself  if and only if 
$$
\mathcal C_{3}= \int_{0}^{\infty}{\dfrac{\Phi(t)}{t^{1-\alpha-\frac{Q+\gamma}{q}}}}dt <  + \infty.
$$
Moreover, ${\big\|{\mathcal H}_{\Phi,\Omega}\big\|_{{K}^{\alpha,p}_{q,\omega}(\mathbb H^n)\to {K}^{\alpha,p}_{q, \omega}(\mathbb H^n)}\simeq \mathcal C_{3}}.$
\end{theorem}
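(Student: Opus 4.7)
The plan is to adapt the template of Theorem \ref{TheoremMorrey} to the Herz scale, with two modifications: an additional layer of Minkowski's integral inequality for the $\ell^p(k)$ sum, and a covering argument for dilated annuli that do not align with the dyadic ones. For the sufficient direction, I would first localize by applying Minkowski and H\"older (in the spherical variable) to the representation \eqref{RH1} on each $C_k$, arriving at
$$\|{\mathcal H}_{\Phi,\Omega}(f)\chi_k\|_{L^q_\omega}\le\|\Omega\|_{L^{q'}(S_{Q-1})}\int_{0}^{\infty}\frac{\Phi(t)}{t}\Big(\int_{S_{Q-1}}\int_{C_k}|f(\delta_{t^{-1}|x|_h}y')|^q\omega(x)\,dx\,dy'\Big)^{1/q}dt.$$
The polar-coordinate/substitution computation of \eqref{S0BR} adapts verbatim with $C_k$ in place of $B_R$, converting the inner factor into $t^{(Q+\gamma)/q}\|f\chi_{\delta_{t^{-1}}C_k}\|_{L^q_\omega}$, where $\delta_{t^{-1}}C_k=\{y\in\mathbb H^n:2^{k-1}/t<|y|_h\le 2^k/t\}$.

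Next, raising to the $p$-th power, weighting by $2^{k\alpha p}$, summing in $k$, and pulling the $t$-integral outside via Minkowski's integral inequality (valid since $p\ge 1$), one reduces matters to bounding $\sum_k 2^{k\alpha p}\|f\chi_{\delta_{t^{-1}}C_k}\|^p_{L^q_\omega}$. Writing $\log_2 t=n+\theta$ with $n\in\mathbb Z$ and $\theta\in[0,1)$, each dilated annulus $\delta_{t^{-1}}C_k$ is contained in $C_{k-n-1}\cup C_{k-n}$, so subadditivity together with two index shifts $j=k-n$ and $j=k-n-1$ yields the bound $\lesssim t^{\alpha p}\|f\|^p_{K^{\alpha,p}_{q,\omega}}$. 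Collecting the powers of $t$ reduces the final $t$-integral exactly to $\mathcal C_3$, producing the desired upper bound with constant $\mathcal C_3\|\Omega\|_{L^{q'}(S_{Q-1})}$.

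For the necessary direction, the formal eigenfunction $|y|_h^a|\Omega(y')|^{q'-2}\overline{\Omega}(y')$ with $a:=-\alpha-(Q+\gamma)/q$ and $y':=\delta_{|y|_h^{-1}}y$ has infinite Herz norm, so I would truncate it to the shell $2^{-N}<|y|_h\le 2^N$ and call the result $f_N$. A polar calculation analogous to \eqref{normfi} gives $\|f_N\|_{K^{\alpha,p}_{q,\omega}}\simeq N^{1/p}\|\Omega\|^{q'/q}_{L^{q'}(S_{Q-1})}$, and \eqref{RH1} evaluates to
$$\mathcal H_{\Phi,\Omega}(f_N)(x)=\|\Omega\|^{q'}_{L^{q'}(S_{Q-1})}|x|_h^a\int_{|x|_h 2^{-N}}^{|x|_h 2^N}\Phi(t)t^{-1-a}dt.$$
For $x\in C_k$ with $|k|\le N/2$ the inner integral dominates $\mathcal C_{3,N}:=\int_{2^{-N/2}}^{2^{N/2-1}}\Phi(t)t^{-1-a}dt$, which increases to $\mathcal C_3$ as $N\to\infty$. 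Restricting the $K$-norm sum to these ``good'' indices therefore gives $\|\mathcal H_{\Phi,\Omega}(f_N)\|_{K^{\alpha,p}_{q,\omega}}\gtrsim N^{1/p}\mathcal C_{3,N}\|\Omega\|^{q'}_{L^{q'}(S_{Q-1})}$, and dividing by $\|f_N\|_{K^{\alpha,p}_{q,\omega}}$ (using $q'-q'/q=1$) yields $\|\mathcal H_{\Phi,\Omega}\|_{K\to K}\gtrsim\mathcal C_{3,N}\|\Omega\|_{L^{q'}(S_{Q-1})}$. Letting $N\to\infty$ forces $\mathcal C_3<\infty$ and matches the operator-norm lower bound. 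The main technical obstacle is the truncation bookkeeping: one must verify that the good indices $|k|\le N/2$ already capture a constant fraction of both $\|f_N\|_K$ and $\|\mathcal H_{\Phi,\Omega}(f_N)\|_K$, so that the ratio really converges to $\mathcal C_3\|\Omega\|_{L^{q'}(S_{Q-1})}$.
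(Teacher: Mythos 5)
Your proposal is correct, and the two halves compare differently with the paper. The sufficiency part is essentially the paper's own argument: the paper derives precisely your displayed inequality (its \eqref{HfHerzchik}), introduces $\ell=\ell(t)$ with $2^{\ell-1}<t^{-1}\leq 2^{\ell}$ so that $\delta_{t^{-1}}C_k$ sits inside two consecutive dyadic annuli (your $\log_2 t=n+\theta$ bookkeeping is the same device, with $\ell=-n$ up to the endpoint convention), and then applies Minkowski's integral inequality for $p\geq 1$ together with $2^{(1-\ell)\alpha}+2^{-\ell\alpha}\lesssim t^{\alpha}$ to collapse everything onto $\mathcal C_3\|\Omega\|_{L^{q'}(S_{Q-1})}$. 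The necessity part is where you genuinely diverge. The paper perturbs the critical exponent: it tests against $|x|_h^{-\alpha-\frac{Q+\gamma}{q}-\varepsilon}|\Omega(\delta_{|x|_h^{-1}}x)|^{q'-2}\overline{\Omega}(\delta_{|x|_h^{-1}}x)$ cut off to $|x|_h\geq 1$, which lies in $K^{\alpha,p}_{q,\omega}$ only because of the geometric tail $\sum_{k\geq 1}2^{-k\varepsilon p}$; it then bounds $\mathcal H_{\Phi,\Omega}(f)$ below via an auxiliary $g_\varepsilon$, and passes to the limit $\varepsilon\to 0^{+}$ using $t^{-\varepsilon}\chi_{(\varepsilon,\infty)}(t)\lesssim 1$, dominated convergence, and a final change of variables $t\mapsto 1/t$. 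You instead keep the exact quasi-eigenfunction with exponent $a=-\alpha-\frac{Q+\gamma}{q}$ and truncate two-sidedly to $2^{-N}<|y|_h\leq 2^{N}$, so that $\|f_N\|_{K^{\alpha,p}_{q,\omega}}\simeq N^{1/p}\|\Omega\|^{q'/q}_{L^{q'}(S_{Q-1})}$ (each of the $\simeq 2N$ shells contributes equally, since $2^{k\alpha}\|\,|x|_h^{a}\chi_k\|_{L^q_\omega}\simeq 1$, including the borderline case $\alpha=0$ where the radial integral is $\log 2$) and the $N^{1/p}$ factors cancel in the ratio; since $\Phi\geq 0$, monotone convergence sends $\mathcal C_{3,N}\uparrow\mathcal C_3$, which is cleaner than the paper's $\varepsilon$-limit. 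Your evaluation of $\mathcal H_{\Phi,\Omega}(f_N)$ and the inclusion $[2^{k-N},2^{k+N-1}]\supset[2^{-N/2},2^{N/2-1}]$ for $|k|\leq N/2$ both check out, and $q'-q'/q=1$ makes the $\Omega$-powers come out right. One remark: the ``technical obstacle'' you flag at the end is not actually an obstacle. For the one-sided inequality $\|\mathcal H_{\Phi,\Omega}\|\gtrsim\mathcal C_{3,N}\|\Omega\|_{L^{q'}(S_{Q-1})}$ you only need a lower bound on the numerator (restricting the $k$-sum to $|k|\leq N/2$ is automatically such a bound) and an upper bound on the denominator (your exact computation of $\|f_N\|_{K^{\alpha,p}_{q,\omega}}$ over all shells); no ``constant fraction'' statement is required, because exact convergence of the ratio is unnecessary --- the matching upper bound $\|\mathcal H_{\Phi,\Omega}\|\lesssim\mathcal C_3\|\Omega\|_{L^{q'}(S_{Q-1})}$ already comes from the sufficiency half, so together they give the asserted two-sided norm equivalence.
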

\begin{proof}
To prove the sufficient condition of theorem, let us assume that $\mathcal C_3<\infty$. By estimating as (\ref{HfMorrey0}) above, it follows that
\begin{align}
\|{\mathcal H}_{\Phi,\Omega}(f)\chi_k\|_{L^q_\omega(\mathbb H^n)}\leq \|\Omega\|_{L^{q'}(S_{Q-1})}\int_{0}^{\infty}\dfrac{\Phi(t)}{t}\Big(\int_{S_{Q-1}}\int_{C_k}|f(\delta_{t^{-1}|x|_h}y')|^q\omega(x)dxdy'\Big)^{1/q}dt\nonumber.
\end{align}
In addition, a similar argument as (\ref{S0BR}) above, we imply
\begin{align}
&\int_{S_{Q-1}}\int_{C_k}|f(\delta_{t^{-1}|x|_h}y')|^q\omega(x)dxdy'\simeq t^{Q+\gamma}\int_{\delta_{t^{-1}}C_k}|f(x)|^q \omega(x)dx,\nonumber
\end{align}
where $\delta_{t^{-1}}C_k=\{z\in\mathbb H^n: \frac{2^{k-1}}{t}<|z|_h\leq \frac{2^k}{t}\}$. 
This leads to
\begin{align}\label{HfHerzchik}
\|{\mathcal H}_{\Phi,\Omega}(f)\chi_k\|_{L^q_\omega(\mathbb H^n)}\leq \|\Omega\|_{L^{q'}(S_{Q-1})}\int_{0}^{\infty}\dfrac{\Phi(t)}{t^{1
-\frac{Q+\gamma}{q}}}\|f\chi_{\delta_{t^{-1}}C_k}\|_{L^q_\omega(\mathbb H^n)}dt.
\end{align}
Note that for any $t>0$, there exists an integer number $\ell=\ell(t)$ satisfying $2^{\ell-1}< t^{-1}\leq 2^{\ell}$. It is clear that $\delta_{t^{-1}}C_k\subset\{z\in \mathbb H^n: 2^{k+\ell-2}<|z|_h\leq 2^{k+\ell}\}.$ 
Hence, by (\ref{HfHerzchik}), it is easy to see that
\begin{align}\label{HfMHerzchik}
\|{\mathcal H}_{\Phi,\Omega}(f)\chi_k\|_{L^q_\omega(\mathbb H^n)}\leq \|\Omega\|_{L^{q'}(S_{Q-1})}\int_{0}^{\infty}\dfrac{\Phi(t)}{t^{1-\frac{Q+\gamma}{q}}}\Big(\|f\chi_{k+\ell-1}\|_{L^q_\omega(\mathbb H^n)}+\|f\chi_{k+\ell}\|_{L^q_\omega(\mathbb H^n)}\Big)dt,
\end{align}
for all $k\in\mathbb Z$. Consequently, by the Minkowski inequaltiy, we have
$\|{\mathcal H}_{\Phi,\Omega}(f)\|_{K^{\alpha,p}_{q,\omega}(\mathbb H^n)}$ is dominated by
\begin{align}
&\leq \|\Omega\|_{L^{q'}(S_{Q-1})}\Big(\sum\limits_{k=-\infty}^{\infty}\Big(2^{k\alpha}\int_{0}^{\infty}\dfrac{\Phi(t)}{t^{1-\frac{Q+\gamma}{q}}}(\|f\chi_{k+\ell-1}\|_{L^q_\omega(\mathbb H^n)}+\|f\chi_{k+\ell}\|_{L^q_\omega(\mathbb H^n)})\Big)^p\Big)^{1/p}
\nonumber
\\
&\lesssim \|\Omega\|_{L^{q'}(S_{Q-1})}\int_{0}^{\infty}\dfrac{\Phi(t)}{t^{1-\frac{Q+\gamma}{q}}}\Big\{\Big(\sum\limits_{k=-\infty}^{\infty}2^{k\alpha p}\|f\chi_{k+\ell-1}\|_{L^q_\omega(\mathbb H^n)}^p\Big)^{1/p}\nonumber
\\
&\,\,\,\,\,\,\,\,\,\,\,\,\,\,\,\,\,\,\,\,\,\,\,\,\,\,\,\,\,\,\,\,\,\,\,\,\,\,\,\,\,\,\,\,\,\,\,\,\,\,\,\,\,\,\,\,\,\,\,\,\,\,\,\,\,\,\,\,\,\,\,\,\,\,\,\,\,\,\,\,\,\,\,\,\,\,\,\,\,\,\,\,\,\,\,\,\,\,\,\,\,\,\,\,\,+\Big(\sum\limits_{k=-\infty}^{\infty}2^{k\alpha p}\|f\chi_{k+\ell-1}\|_{L^q_\omega(\mathbb H^n)}^p\Big)^{1/p}\Big\}dt.
\nonumber
\end{align}
Since $2^{\ell-1}<t^{-1}\leq 2^{\ell}$ and the definition of the Herz space, we estimate
\begin{align}
&\Big(\sum\limits_{k=-\infty}^{\infty}2^{k\alpha p}\|f\chi_{k+\ell-1}\|_{L^q_\omega(\mathbb H^n)}^p\Big)^{1/p}+\Big(\sum\limits_{k=-\infty}^{\infty}2^{k\alpha p}\|f\chi_{k+\ell-1}\|_{L^q_\omega(\mathbb H^n)}^p\Big)^{1/p}\nonumber
\\
&\leq \Big(2^{(1-\ell)\alpha}+2^{-\ell\alpha}\Big)\|f\|_{K^{\alpha,p}_{q,\omega}(\mathbb H^n)}\lesssim t^{\alpha}.\|f\|_{K^{\alpha,p}_{q,\omega}(\mathbb H^n)}\nonumber.
\end{align}
Therefore, we obtain
\begin{align}
\|{\mathcal H}_{\Phi,\Omega}(f)\|_{K^{\alpha,p}_{q,\omega}(\mathbb H^n)}\lesssim \mathcal C_3.\|\Omega\|_{L^{q'}(S_{Q-1})}\|f\|_{K^{\alpha,p}_{q,\omega}(\mathbb H^n)},\nonumber
\end{align}
 which implies that ${\mathcal H}_{\Phi,\Omega}$ is bounded from ${K}^{\alpha,p}_{q,\omega}(\mathbb H^n)$ to itself.
\vskip 5pt
Conversely, suppose that ${\mathcal H}_{\Phi,\Omega}$ is bounded on ${K}^{\alpha,p}_{q,\omega}(\mathbb H^n)$. Then, we also choose the function $f$ as follows
$$
f(x) = \left\{ \begin{array}{l}
0,\,\,\,\,\,\,\,\,\,\,\,\,\,\,\,\,\,\,\,\,\,\,\,\,\,\,\,\,\,\,\,\,\,\,\,\,\,\,\,\,\,\,\,\,\,\,\,\,\,\,\,\,\,\,\,\,\,\,\,\,\,\,\,\,\,\,\,\,\,\,\,\,\,\,\,\,\,\,\,\,\,\,\,\,\,\,\,\,\,\,\,\,\,\,\textit{\rm if }\,\,|x|_h<1,
\\
|x|_h^{-\alpha-\frac{Q+\gamma}{q}-\varepsilon}.|\Omega(\delta_{|x|_h^{-1}}x)|^{q'-2}.\overline{\Omega}(\delta_{|x|_h^{-1}}x),\,\textit{\rm otherwise.}
\end{array} \right.
$$
It is evident that $\|f\chi_k\|_{L^q_\omega(\mathbb H^n)}=0$ for every $k<0$. Otherwise, for $k\geq 0$  we have 
\begin{align}\label{fCkHerz}
\|f\chi_k\|_{L^q_\omega(\mathbb H^n)}=&\Big(\int_{C_k}|x|_h^{-\alpha q-(Q+\gamma)-\varepsilon q}|\Omega(\delta_{|x|_h^{-1}}x)|^{q'}\omega(x)dx\Big)^{1/q}\nonumber
\\
&=\Big(\int_{2^{k-1}}^{2^k}\int_{S_{Q-1}}|\delta_ry'|_h^{-\alpha q-(Q+\gamma)-\varepsilon q+\gamma}|\Omega(y')|^{q'}r^{Q-1}dy'dr\Big)^{1/q}\nonumber
\\
&=\Big(\int_{2^{k-1}}^{2^k}\int_{S_{Q-1}}r^{-(\alpha+\varepsilon)q-1}|\Omega(y')|^{q'}dy'dr\Big)^{1/q}\nonumber\\
&=2^{-(\alpha+\varepsilon)k}\Big(\frac{2^{q(\alpha+\varepsilon)-1}}{q(\alpha+\varepsilon)}\Big)^{1/q}\|\Omega\|_{L^q(S_{Q-1})}^{q'/q}.
\end{align}
Thus, by the definition of the  space ${K}^{\alpha,p}_{q,\omega}(\mathbb H^n)$, we have
\begin{align}\label{fHerznorm}
\|f\|_{K^{\alpha,p}_{q,\omega}(\mathbb H^n)}&=\Big(\sum\limits_{k=1}^{\infty}2^{k\alpha p}\Big(2^{-(\alpha+\varepsilon)k}\Big(\frac{2^{q(\alpha+\varepsilon)-1}}{q(\alpha+\varepsilon)}\Big)^{1/q}\|\Omega\|_{L^q(S_{Q-1})}^{q'/q}\Big)^p\Big)^{1/p}
\nonumber
\\
&=\Big(\frac{2^{q(\alpha+\varepsilon)-1}}{q(\alpha+\varepsilon)}\Big)^{1/q}\|\Omega\|_{L^q(S_{Q-1})}^{q'/q}\Big(\sum\limits_{k=1}^{\infty}2^{-k\varepsilon p}\Big)^{1/p}\nonumber
\\
&=\Big(\frac{2^{q(\alpha+\varepsilon)-1}}{q(\alpha+\varepsilon)}\Big)^{1/q}\|\Omega\|_{L^q(S_{Q-1})}^{q'/q}\frac{1}{(2^{\varepsilon p}-1)^{1/p}}<\infty.
\end{align}
Since $\delta_{|\delta_{t|x|_h}y'|^{-1}_h}\delta_{t|x|_h}y'=y'$, we obtain
\begin{align}
{\mathcal H}_{\Phi,\Omega}(f)(x)&=\int_{0}^{\infty}\int_{S_{Q-1}}\dfrac{\Phi(\frac{1}{t})}{t}\Omega(y')f(\delta_{t|x|_h}y')dy'dt\nonumber
\\
&=\int_{\frac{1}{|x|_h}}^{\infty}\int_{S_{Q-1}}\dfrac{\Phi(\frac{1}{t})}{t}\Omega(y')
|\delta_{t|x|_h}y'|^{-\alpha-\frac{Q+\gamma}{q}-\varepsilon}
|\Omega(y')|^{q'-2}\overline{\Omega}(y')dy'dt.\nonumber
\\
&=\|\Omega\|^{q'}_{L^{q'}(S_{Q-1})}.\Big(\int_{\frac{1}{|x|_h}}^{\infty}\dfrac{\Phi(\frac{1}{t})}{t^{1+\alpha+\frac{Q+\gamma}{q}+\varepsilon}}dt\Big).|x|_h^{-\alpha-\frac{Q+\gamma}{q}-\varepsilon}.\nonumber
\end{align}
Hence, by setting up $g_{\varepsilon}(x)=|x|_h^{-\alpha-\frac{Q+\gamma}{q}-\varepsilon}\chi_{\mathbb H^n\setminus B(0,\varepsilon^{-1})}$, we deduce
\begin{align}\label{HfHerz}
{\mathcal H}_{\Phi,\Omega}(f)(x)&\geq\|\Omega\|^{q'}_{L^{q'}(S_{Q-1})}.\Big(\int_{0}^{\infty}\dfrac{\Phi(\frac{1}{t})\chi_{(\varepsilon,\infty)}(t)}{t^{1+\alpha+\frac{Q+\gamma}{q}+\varepsilon}}dt\Big). g_{\varepsilon}(x),
\end{align}
We denote by $k_\varepsilon$ the integer number such that $2^{k_\varepsilon-1}\geq \varepsilon^{-1}>2^{k_\varepsilon-2}$. As a consequence, by making as (\ref{fCkHerz}) above, it follows that
\begin{align}
\|g_{\varepsilon}\|_{K^{\alpha,p}_{q,\omega}(\mathbb H^n)}&\geq \Big(\sum\limits_{k=k_{\varepsilon}}^{\infty}2^{k\alpha p}\|g_{\varepsilon}
\chi_k\|_{L^q_\omega(\mathbb H^n)}^p\Big)^{1/p}= \Big(\sum\limits_{k=k_{\varepsilon}}^{\infty}2^{k\alpha p}\Big(2^{-(\alpha+\varepsilon)k}\Big(\frac{2^{q(\alpha+\varepsilon)-1}}{q(\alpha+\varepsilon)}\Big)^{1/q}\Big)^p\Big)^{1/p}
\nonumber
\\
&=\Big(\frac{2^{q(\alpha+\varepsilon)-1}}{q(\alpha+\varepsilon)}\Big)^{1/q}\Big(\sum\limits_{k=k_{\varepsilon}}^{\infty}2^{-k\varepsilon p}\Big)^{1/p}=\Big(\frac{2^{q(\alpha+\varepsilon)-1}}{q(\alpha+\varepsilon)}\Big)^{1/q}\frac{2^{\varepsilon(1-k_{\varepsilon})}}{(2^{\varepsilon p}-1)^{1/p}}.\nonumber
\end{align}
From this reason, by (\ref{fHerznorm}) and (\ref{HfHerz}), it leads to
\begin{align}
\|{\mathcal H}_{\Phi,\Omega}(f)\|_{K^{\alpha,p}_{q,\omega}(\mathbb H^n)}&\geq \|\Omega\|^{q'}_{L^{q'}(S_{Q-1})}.\Big(\int_{0}^{\infty}\dfrac{\Phi(\frac{1}{t})\chi_{(\varepsilon,\infty)}(t)}{t^{1+\alpha+\frac{Q+\gamma}{q}+\varepsilon}}dt\Big).\|g_{\varepsilon}\|_{K^{\alpha,p}_{q,\omega}(\mathbb H^n)}\nonumber
\\
&\geq \|\Omega\|_{L^{q'}(S_{Q-1})}.\Big(\int_{0}^{\infty}\dfrac{\Phi(\frac{1}{t})\chi_{(\varepsilon,\infty)}(t)}{t^{1+\alpha+\frac{Q+\gamma}{q}+\varepsilon}}dt\Big).2^{\varepsilon(1-k_{\varepsilon})}.\|f\|_{K^{\alpha,p}_{q,\omega}(\mathbb H^n)}.\nonumber
\end{align} 
Since  $2^{k_\varepsilon-1}\geq \varepsilon^{-1}>2^{k_\varepsilon-2}$, we imply that $2^{k_\varepsilon}\simeq  \varepsilon^{-1}$. This implies that
\begin{align}
\|{\mathcal H}_{\Phi,\Omega}(f)\|_{K^{\alpha,p}_{q,\omega}(\mathbb H^n)}&\gtrsim 2^{\varepsilon}\varepsilon^{\varepsilon}\|\Omega\|_{L^{q'}(S_{Q-1})}.\Big(\int_{0}^{\infty}\dfrac{\Phi(\frac{1}{t})\chi_{(\varepsilon,\infty)}(t)}{t^{1+\alpha+\frac{Q+\gamma}{q}+\varepsilon}}dt\Big).\|f\|_{K^{\alpha,p}_{q,\omega}(\mathbb H^n)}.\nonumber
\end{align}
Remark that $t^{-\varepsilon}\chi_{(\varepsilon,\infty)}(t)\lesssim 1$ with $\varepsilon$ sufficiently small and $\mathop{\rm lim}\limits_{\varepsilon\to 0^{+}} 2^{\varepsilon}\varepsilon^{\varepsilon}=1$. Thus, by the dominated convergence theorem of Lebesgue, we immediately obtain
$$
\int_{0}^{\infty}\dfrac{\Phi(\frac{1}{t})}{t^{1+\alpha+\frac{Q+\gamma}{q}}}dt<\infty.
$$
By a change of variables, we finish the proof of this theorem.
\end{proof}
For the Herz spaces associated with the Muckenhoupt weights, we also have the boundedness of the rough Hausdorff operators on the Heisenberg group. More precisely, we have the following interesting results.
\begin{theorem}\label{TheoremHerz1}
Let $1\leq p, q^*, q,\zeta<\infty$, $\alpha\in\mathbb R$, $\alpha^*<0$ satisfying
\begin{align}\label{alpha*alpha}
\dfrac{1}{q^*}+\dfrac{\alpha^*}{Q}=\dfrac{1}{q}+\dfrac{\alpha}{Q}.
\end{align}
Suppose that $\omega\in A_{\zeta}$ with the finite critical index $r_{\omega}$ for the reverse H\"{o}lder condition and $q^*>q\zeta r_{\omega}/(r_{\omega}-1)$, $\Omega\in L^{(\frac{q^*}{\zeta})'}(S_{Q-1}),\delta\in (1,r_\omega)$.
\\

$(\rm i)$ If $\dfrac{1}{q^*}+\dfrac{\alpha^*}{Q}\geq 0$ and 
$$\mathcal C_{4.1}= \int_{0}^{1/2}\dfrac{\Phi(t)}{t^{1-Q\frac{(\delta-1)}{\delta}(\frac{1}{q^*}+\frac{\alpha}{Q^*})}}dt +\int_{1/2}^{\infty}\dfrac{\Phi(t)}{t^{1-Q\zeta(\frac{1}{q^*}+\frac{\alpha^*}{Q})}}dt<\infty,
$$
then ${\mathcal H}_{\Phi,\Omega}$ is bounded from ${\mathop{K}\limits^.}^{\alpha^*,p}_{q^*,\omega}(\mathbb H^n)$ to ${\mathop{K}\limits^.}^{\alpha, p}_{q,\omega}(\mathbb H^n)$.
\\

$\,(\rm ii)$ If $\dfrac{1}{q^*}+\dfrac{\alpha^*}{Q}<0$ and 
$$\mathcal C_{4.2}= \int_{0}^{1/2}\dfrac{\Phi(t)}{t^{1-Q\zeta(\frac{1}{q^*}+\frac{\alpha^*}{Q})}}dt+\int_{1/2}^{\infty}\dfrac{\Phi(t)}{t^{1-Q\frac{(\delta-1)}{\delta}(\frac{1}{q^*}+\frac{\alpha^*}{Q})}}dt<\infty,
$$
then ${\mathcal H}_{\Phi,\Omega}$ is bounded from ${\mathop{K}\limits^.}^{\alpha^*, p}_{q^*,\omega}(\mathbb H^n)$ to ${\mathop{K}\limits^.}^{\alpha, p}_{q,\omega}(\mathbb H^n)$.
\end{theorem}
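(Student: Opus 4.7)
The plan is to parallel the Morrey argument of Theorem \ref{TheoremMorrey1} on the dyadic annuli $C_k$ instead of on balls $B_R$, the new ingredients being the scaling identity (\ref{alpha*alpha}) and a case analysis based on the sign of $a:=\tfrac{1}{q^*}+\tfrac{\alpha^*}{Q}$.

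I would first carry out the local estimate on a single annulus $C_k$. Apply Minkowski's inequality in $t$ and $y'$, then Hölder on $C_k$ with exponent $q^*/(q\zeta)$ together with the reverse Hölder condition $\omega\in RH_r$, where $r\in(1,r_\omega)$ is chosen so that $q^*/\zeta=qr'$; the hypothesis $q^*>q\zeta r_\omega/(r_\omega-1)$ is exactly what permits such a choice. This transfers the weighted $L^q_\omega$-norm on $C_k$ to the unweighted $L^{q^*/\zeta}$-norm and produces the factor $\omega(C_k)^{1/q}|C_k|^{-\zeta/q^*}$. A second Hölder on $S_{Q-1}$ pulls out $\|\Omega\|_{L^{(q^*/\zeta)'}(S_{Q-1})}$, the polar change of variables of (\ref{S0BR}) supplies a factor $t^{Q\zeta/q^*}\|f\|_{L^{q^*/\zeta}(\delta_{t^{-1}}C_k)}$, and Proposition \ref{pro2.4DFan} converts this back to the weighted $L^{q^*}_\omega$-norm. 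Using $|\delta_{t^{-1}}C_k|\simeq t^{-Q}|U_k|$ together with $\omega(C_k)\simeq\omega(U_k)$ (both immediate from Proposition \ref{pro2.3DFan}, since $|C_k|/|U_k|\simeq 1$), the size factors collapse and the analogue of (\ref{HfBRMorrey}) reads
\[
\|\mathcal{H}_{\Phi,\Omega}(f)\chi_k\|_{L^q_\omega}\lesssim \|\Omega\|_{L^{(q^*/\zeta)'}}\int_0^\infty\frac{\Phi(t)}{t}\,\frac{\omega(U_k)^{1/q}}{\omega(\delta_{t^{-1}}C_k)^{1/q^*}}\,\|f\|_{L^{q^*}_\omega(\delta_{t^{-1}}C_k)}\,dt.
\]

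Next I choose $\ell=\ell(t)\in\mathbb{Z}$ with $2^{\ell-1}<t^{-1}\le 2^\ell$, so that $\delta_{t^{-1}}C_k\subset C_{k+\ell-1}\cup C_{k+\ell}$ and $\omega(\delta_{t^{-1}}C_k)\simeq\omega(U_{k+\ell})$. Multiplying by $\omega(U_k)^{\alpha/Q}$, taking the $\ell^p$-norm in $k$, and interchanging it with the $t$-integral by Minkowski yields
\[
\|\mathcal{H}_{\Phi,\Omega}(f)\|_{{\mathop{K}\limits^{.}}^{\alpha,p}_{q,\omega}}\lesssim \|\Omega\|_{L^{(q^*/\zeta)'}}\int_0^\infty\frac{\Phi(t)}{t}\Bigl(\sum_{k\in\mathbb{Z}}\omega(U_k)^{p(\alpha/Q+1/q)}\omega(U_{k+\ell})^{-p/q^*}\|f\chi_{k+\ell}\|_{L^{q^*}_\omega}^p\Bigr)^{1/p}dt,
\]
plus an analogous term with $k+\ell-1$. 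The scaling identity (\ref{alpha*alpha}) is precisely $\alpha/Q+1/q=a=\alpha^*/Q+1/q^*$, so the inner weight factor rewrites as $\omega(U_{k+\ell})^{p\alpha^*/Q}\bigl(\omega(U_k)/\omega(U_{k+\ell})\bigr)^{pa}$. Pulling out $\sup_k\bigl(\omega(U_k)/\omega(U_{k+\ell})\bigr)^a$ and shifting the summation index $k\mapsto k-\ell$ turns the inner sum into $\|f\|_{{\mathop{K}\limits^{.}}^{\alpha^*,p}_{q^*,\omega}}^p$.

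The main obstacle is the final step of controlling $\bigl(\omega(U_k)/\omega(U_{k+\ell})\bigr)^{a}$ by a power of $t$ uniformly in $k$, because the direction of the inequality in Proposition \ref{pro2.3DFan} depends on whether $U_k\subset U_{k+\ell}$ or the reverse, while which side of the resulting two-sided estimate is usable depends on the sign of $a$. Using $2^{-\ell}\simeq t$ and taking $r=\delta\in(1,r_\omega)$, Proposition \ref{pro2.3DFan} applied in its two natural directions gives $c\,t^{Q\zeta}\lesssim \omega(U_k)/\omega(U_{k+\ell})\lesssim C\,t^{Q(\delta-1)/\delta}$ when $\ell\ge 1$ (i.e.\ $t$ small), and the symmetric $c\,t^{Q(\delta-1)/\delta}\lesssim\omega(U_k)/\omega(U_{k+\ell})\lesssim C\,t^{Q\zeta}$ when $\ell\le 0$ (i.e.\ $t$ large). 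In case (i), $a\ge 0$, the upper bounds survive, producing $t^{Qa(\delta-1)/\delta}$ for small $t$ and $t^{Qa\zeta}$ for large $t$; the resulting $t$-integral is exactly $\mathcal{C}_{4.1}$. In case (ii), $a<0$, raising to the negative power flips the direction and the lower bounds become usable, producing $t^{Qa\zeta}$ for small $t$ and $t^{Qa(\delta-1)/\delta}$ for large $t$; the corresponding integral is $\mathcal{C}_{4.2}$. Everything else is a routine concatenation of Hölder-type inequalities.
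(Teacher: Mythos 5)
Your proof is correct, but it takes a genuinely different route from the paper's. The paper enlarges the localization to the dilated \emph{ball}: it bounds $\|{\mathcal H}_{\Phi,\Omega}(f)\chi_k\|_{L^q_\omega}$ by $\int_0^\infty \frac{\Phi(t)}{t}\,\omega(U_k)^{1/q}\omega(\delta_{t^{-1}}U_k)^{-1/q^*}\|f\|_{L^{q^*}_\omega(\delta_{t^{-1}}U_k)}\,dt$, splits $t^{-1}\in(2^j,2^{j+1}]$ so that $U_{k+j}\subset\delta_{t^{-1}}U_k\subset U_{k+j+1}$, and then expands $\|f\|_{L^{q^*}_\omega(U_{k+j+1})}$ over \emph{all} annuli $\ell\le j+1$, which produces the coefficient matrix $\mathcal A_{j,\ell}$ and requires summing the geometric series $\sum_{\ell\le j}2^{(j-\ell)\alpha^*(\delta-1)/\delta}$ --- this is exactly where the hypothesis $\alpha^*<0$ is consumed. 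You instead retain the \emph{annulus} localization $\delta_{t^{-1}}C_k\subset C_{k+\ell-1}\cup C_{k+\ell}$ with $2^{\ell-1}<t^{-1}\le 2^\ell$, precisely the bookkeeping the paper itself uses in the power-weight case (Theorem \ref{TheoremHerz}, estimate (\ref{HfMHerzchik})), so each $k$ couples to only two shifted annuli, no inner $\ell$-sum arises, and the shift $k\mapsto k-\ell$ reproduces the Herz norm directly; your two-sided application of Proposition \ref{pro2.3DFan} according to the signs of $a=\frac{1}{q^*}+\frac{\alpha^*}{Q}$ and $\ell$ recovers exactly the exponents in $\mathcal C_{4.1}$ and $\mathcal C_{4.2}$ (and incidentally exposes the typo $\frac{\alpha}{Q^*}$ in the statement, which should read $\frac{\alpha^*}{Q}$; the split at $1/2$ rather than $1$ is immaterial since the two integrands are comparable on $[1/2,1]$). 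What your route buys: it is shorter, and the hypothesis $\alpha^*<0$ plays no role in it, so your argument actually proves the statement without that restriction. What the paper's route buys: the ball localization and the $\mathcal A_{j,\ell}$ Schur-type summation are the machinery reused verbatim for the commutator bound (Theorem \ref{HerzCMO2}) and echo the block arguments of Theorems \ref{TheoremHerz2}--\ref{TheoremHerz3}. Two points you should make explicit to be airtight, though both are routine: the reverse H\"older condition and Proposition \ref{pro2.4DFan} are stated for balls, so they should be applied on $U_k$ and on $\delta_{t^{-1}}U_k$ (to $f\chi_{\delta_{t^{-1}}C_k}$), after which the comparabilities $|C_k|\simeq|U_k|$, $\omega(C_k)\simeq\omega(U_k)$ and $\omega(\delta_{t^{-1}}C_k)\simeq\omega(\delta_{t^{-1}}U_k)\simeq\omega(U_{k+\ell})$ follow uniformly from the lower bound in Proposition \ref{pro2.3DFan}, as you indicate; and for the $C_{k+\ell-1}$ term you additionally need $\omega(U_{k+\ell-1})\simeq\omega(U_{k+\ell})$, which follows the same way.
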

\begin{proof}
From Definition \ref{RH1} and the similar reason as (\ref{HfBRMorrey}), for $k\in\mathbb Z$, we estimate
\begin{align}
\|{\mathcal H}_{\Phi,\Omega}(f)\chi_k\|_{L^{q}_\omega(\mathbb H^n)}\lesssim \|\Omega\|_{L^{(\frac{q^*}{\zeta})'}(S_{Q-1})}\int_{0}^{\infty}\dfrac{\Phi(t)}{t}\dfrac{\omega(U_k)^{\frac{1}{q}}}{\omega(\delta_{t^{-1}}U_k)^{\frac{1}{q^*}}}\|f\|_{L^{q^*}_\omega(\delta_{t^{-1}}U_k)}dt,\nonumber
\end{align}
where $\delta_{t^{-1}}U_k=\{z\in\mathbb H^n: |z|_h\leq \frac{2^{k}}{t}\}.$
Hence, by making the Minkowski inequality and the relation (\ref{alpha*alpha}), $\|{\mathcal H}_{\Phi,\Omega}(f)\|_{{\mathop{K}\limits^{.}}^{\alpha,p}_{q,\omega}(\mathbb H^n)}$ is dominated by
\begin{align}
&\|\Omega\|_{L^{(\frac{q^*}{\zeta})'}(S_{Q-1})}\Big(\sum\limits_{k=-\infty}^{\infty}\omega(U_k)^{\frac{\alpha p}{Q}}\Big(\int_{0}^{\infty}\dfrac{\Phi(t)}{t}\dfrac{\omega(U_k)^{\frac{1}{q}}}{\omega(\delta_{t^{-1}}U_k)^{\frac{1}{q^*}}}\|f\|_{L^{q^*}_\omega(\delta_{t^{-1}}U_k)}dt\Big)^p\Big)^{1/p}.\nonumber
\\
&\leq \|\Omega\|_{L^{(\frac{q^*}{\zeta})'}(S_{Q-1})}\int\limits_{0}^{\infty}\dfrac{\Phi(t)}{t}\Big(\sum\limits_{k=-\infty}^{\infty}\Big(\dfrac{\omega(U_k)^{\frac{1}{q}+\frac{\alpha}{Q}}}{\omega(\delta_{t^{-1}}U_k)^{\frac{1}{q^*}}}\|f\|_{L^{q^*}_\omega(\delta_{t^{-1}}U_k)}\Big)^p\Big)^{\frac{1}{p}}dt.\nonumber
\\
&= \|\Omega\|_{L^{(\frac{q^*}{\zeta})'}(S_{Q-1})}\sum\limits_{j\in\mathbb Z}\,\int_{t^{-1}\in (2^{j},2^{j+1}]}\dfrac{\Phi(t)}{t}
\Big(\sum\limits_{k=-\infty}^{\infty}\Big(\dfrac{\omega(U_k)^{\frac{1}{q^*}+\frac{\alpha^*}{Q}}}{\omega(\delta_{t^{-1}}U_k)^{\frac{1}{q^*}}}\|f\|_{L^{q^*}_\omega(\delta_{t^{-1}}U_k)}\Big)^p\Big)^{\frac{1}{p}}dt.\nonumber
\end{align}
For $t^{-1}\in (2^{j},2^{j+1}]$, we have $U_{k+j}\subset\delta_{t^{-1}}U_k\subset U_{k+j+1}$. Hence, it follows that 
\begin{align}
\|{\mathcal H}_{\Phi,\Omega}(f)\|_{{\mathop{K}\limits^{.}}^{\alpha,p}_{q,\omega}(\mathbb H^n)}&\lesssim \|\Omega\|_{L^{(\frac{q^*}{\zeta})'}(S_{Q-1})}\sum\limits_{j\in\mathbb Z}\,\int_{t^{-1}\in (2^{j},2^{j+1}]}\dfrac{\Phi(t)}{t}\times
\nonumber
\\
&\,\,\,\,\,\,\,\,\,\,\,\,\,\,\,\times\Big(\sum\limits_{k=-\infty}^{\infty}\Big(\dfrac{\omega(U_k)^{\frac{1}{q^*}+\frac{\alpha^*}{Q}}}{\omega(U_{k+j})^{\frac{1}{q^*}}}\|f\|_{L^{q^*}_\omega(U_{k+j+1})}\Big)^p\Big)^{\frac{1}{p}}dt.\nonumber
\end{align}
Then,
\begin{align}\label{HfHerzAp1}
&\|{\mathcal H}_{\Phi,\Omega}(f)\|_{{\mathop{K}\limits^{.}}^{\alpha,p}_{q,\omega}(\mathbb H^n)}\lesssim\|\Omega\|_{L^{(\frac{q^*}{\zeta})'}(S_{Q-1})}\sum\limits_{j\in\mathbb Z}\,\int_{ t^{-1}\in (2^{j},2^{j+1}]}\dfrac{\Phi(t)}{t}
\times\nonumber
\\
&\,\,\,\times\Big(\sum\limits_{k=-\infty}^{\infty}\Big\{\Big(\dfrac{\omega(U_k)}{\omega(U_{k+j})}\Big)^{\frac{1}{q^*}+\frac{\alpha^*}{Q}}\sum\limits_{\ell=-\infty}^{j+1}\Big(\dfrac{\omega(U_{k+j})}{\omega(U_{k+\ell})}\Big)^{\frac{\alpha^*}{Q}}\omega(U_{k+\ell})^{\frac{\alpha^*}{Q}}\|f\chi_{k+\ell}\|_{L^{q^*}_\omega(\mathbb H^n)}\Big\}^p\Big)^{1/p}dt.
\end{align}
For simplicity, we denote
\[
\mathcal A_{j,\ell}= \left\{ \begin{array}{l}
2^{(j-\ell)\alpha^*(\delta-1)/\delta},\,\textit{\rm if }\, \ell\leq j,
\\
 2^{-\zeta\alpha^*},\,\,\,\,\,\,\,\,\,\,\,\,\,\,\,\,\,\,\,\,\,\textit{\rm if }\, \ell = j+1.
\end{array} \right.\]
By Proposition \ref{pro2.3DFan} with $\alpha^*<0$ and $\ell\leq j+1$, we obtain
\begin{align}\label{omegaHez11}
\Big(\dfrac{\omega(U_{k+j})}{\omega(U_{k+\ell})}\Big)^{\frac{\alpha^*}{Q}}\lesssim \left\{ \begin{array}{l}
\Big(\dfrac{|U_{k+j}|}{|U_{k+\ell}|}\Big)^{\frac{\alpha^*(\delta-1)}{Q\delta}},\,\textit{\rm if }\, \ell\leq j,
\\
\\
\Big(\dfrac{|U_{k+j}|}{|U_{k+\ell}|}\Big)^{\frac{\zeta\alpha^*}{Q}},\,\,\,\,\,\,\,\,\,\textit{\rm if }\, \ell = j+1.
\end{array} \right.
\lesssim \mathcal A_{j,\ell}.
\end{align}
It is useful to note that by Proposition \ref{pro2.3DFan} we have to consider the following two cases.
\vskip 5pt
Case 1: $\frac{1}{q^*}+\frac{\alpha^*}{Q}\geq 0$ and $t^{-1}\in (2^{j},2^{j+1}]$. We have
\begin{align}\label{omegaHez12}
\Big(\dfrac{\omega(U_k)}{\omega(U_{k+j})}\Big)^{\frac{1}{q^*}+\frac{\alpha^*}{Q}} \lesssim \left\{ \begin{array}{l}
\Big(\dfrac{|U_k|}{|U_{k+j}|}\Big)^{\zeta(\frac{1}{q^*}+\frac{\alpha^*}{Q})}\lesssim 2^{-jQ\zeta(\frac{1}{q^*}+\frac{\alpha^*}{Q})}\lesssim t^{Q\zeta(\frac{1}{q^*}+\frac{\alpha^*}{Q})},\,\,\,\,\,\,\,\,\,\,\,\,\,\,\,\,\,\,\,\,\,\,\,\,\,\,\,\textit{\rm if}\,j\leq 0,
\\
\\
\Big(\dfrac{|U_k|}{|U_{k+j}|}\Big)^{\frac{(\delta-1)}{\delta}(\frac{1}{q^*}+\frac{\alpha^*}{Q})}\lesssim 2^{-jQ\frac{(\delta-1)}{\delta}(\frac{1}{q^*}+\frac{\alpha^*}{Q})}\lesssim t^{Q\frac{(\delta-1)}{\delta}(\frac{1}{q^*}+\frac{\alpha^*}{Q})},\,\textit{\rm otherwise}.
\end{array} \right.
\end{align}
\vskip 5pt
Case 2: $\frac{1}{q^*}+\frac{\alpha^*}{Q}<0$ and $t^{-1}\in (2^{j},2^{j+1}]$. Then,  we also have
\begin{align}\label{omegaHez13}
\Big(\dfrac{\omega(U_k)}{\omega(U_{k+j})}\Big)^{\frac{1}{q^*}+\frac{\alpha^*}{Q}} \lesssim \left\{ \begin{array}{l}
\Big(\dfrac{|U_k|}{|U_{k+j}|}\Big)^{\zeta(\frac{1}{q^*}+\frac{\alpha^*}{Q})}\lesssim 2^{-jQ\zeta(\frac{1}{q^*}+\frac{\alpha^*}{Q})}\lesssim t^{\zeta Q(\frac{1}{q^*}+\frac{\alpha^*}{Q})},\,\,\,\,\,\,\,\,\,\,\,\,\,\,\,\,\,\,\,\,\,\,\,\,\,\,\textit{\rm if}\,j> 0,
\\
\\
\Big(\dfrac{|U_k|}{|U_{k+j}|}\Big)^{\frac{(\delta-1)}{\delta}(\frac{1}{q^*}+\frac{\alpha^*}{Q})}\lesssim 2^{-jQ\frac{(\delta-1)}{\delta}(\frac{1}{q^*}+\frac{\alpha^*}{Q})}\lesssim t^{Q\frac{(\delta-1)}{\delta}(\frac{1}{q^*}+\frac{\alpha^*}{Q})},\,\textit{\rm otherwise}.
\end{array} \right.
\end{align}
Now, by (\ref{HfHerzAp1}), (\ref{omegaHez11}) and (\ref{omegaHez12}), we have
\begin{align}
&\|{\mathcal H}_{\Phi,\Omega}(f)\|_{{\mathop{K}\limits^{.}}^{\alpha,p}_{q,\omega}(\mathbb H^n)}\lesssim \|\Omega\|_{L^{(\frac{q^*}{\zeta})'}(S_{Q-1})}\times\nonumber
\\
&\times\Big(\sum\limits_{j=-\infty}^{0}\,\int_{ t^{-1}\in (2^{j},2^{j+1}]}\dfrac{\Phi(t)}{t^{1-Q\zeta(\frac{1}{q^*}+\frac{\alpha^*}{Q})}}F_jdt +\sum\limits_{j=1}^{\infty}\,\int_{ t^{-1}\in (2^{j},2^{j+1}]}\dfrac{\Phi(t)}{t^{1-Q\frac{(\delta-1)}{\delta}(\frac{1}{q^*}+\frac{\alpha^*}{Q})}}F_jdt\Big),\nonumber
\end{align}
where $F_j=\Big(\sum\limits_{k=-\infty}^{\infty}\Big\{\sum\limits_{\ell=-\infty}^{j+1}\mathcal A_{j,\ell}.\omega(B_{k+\ell})^{\frac{\alpha^*}{Q}}\|f\chi_{k+\ell}\|_{L^{q^*}_\omega(\mathbb H^n)}\Big\}^p\Big)^{1/p}.$
By the Minkowski inequality and the definition of $\mathcal A_{j,\ell}$ above, we get
\begin{align}
F_j&\leq \sum\limits_{\ell=-\infty}^{j+1}\mathcal A_{j,\ell}\Big\{\sum\limits_{k=-\infty}^{\infty}\Big(\omega(B_{k+\ell})^{\frac{\alpha^*}{Q}}\|f\chi_{k+\ell}\|_{L^{q^*}_\omega(\mathbb H^n)}\Big)^p\Big\}^{1/p}\nonumber
\\
 &=\Big(\sum\limits_{\ell=-\infty}^{j}2^{(j-\ell)\alpha^*(\delta-1)/\delta}+2^{-\zeta\alpha^*}\Big)\|f\|_{{\mathop{K}\limits^{.}}^{\alpha^*, p}_{q^*,\omega}(\mathbb H^n)}\lesssim \|f\|_{{\mathop{K}\limits^{.}}^{\alpha^*, p}_{q^*,\omega}(\mathbb H^n)}.\nonumber
\end{align}
Therefore, we have
\begin{align}
\|{\mathcal H}_{\Phi,\Omega}(f)\|_{{\mathop{K}\limits^{.}}^{\alpha,p}_{q,\omega}(\mathbb H^n)}&\lesssim \|\Omega\|_{L^{(\frac{q^*}{\zeta})'}(S_{Q-1})}\Big(\sum\limits_{j=-\infty}^{0}\,\int_{ t^{-1}\in (2^{j},2^{j+1}]}\dfrac{\Phi(t)}{t^{1-Q\zeta(\frac{1}{q^*}+\frac{\alpha^*}{Q})}}dt +\nonumber
\\
&+\sum\limits_{j=1}^{\infty}\,\int_{ t^{-1}\in (2^{j},2^{j+1}]}\dfrac{\Phi(t)}{t^{1-Q\frac{(\delta-1)}{\delta}(\frac{1}{q^*}+\frac{\alpha^*}{Q})}}dt\Big)\|f\|_{{\mathop{K}\limits^{.}}^{\alpha^*, p}_{q^*,\omega}(\mathbb H^n)}\nonumber
\\
&=\mathcal C_{4.1}.\|\Omega\|_{L^{(\frac{q^*}{\zeta})'}(S_{Q-1})}.\|f\|_{{\mathop{K}\limits^{.}}^{\alpha^*, p}_{q^*,\omega}(\mathbb H^n)},\nonumber
\end{align}
which finishes the proof for part (i).
\vskip 5pt
Next, we will prove for part (ii). By combining (\ref{HfHerzAp1}), (\ref{omegaHez11}), (\ref{omegaHez13}) and making a similar estimation as above, it is not difficult to obtain that
\begin{align}
\|{\mathcal H}_{\Phi,\Omega}(f)\|_{{\mathop{K}\limits^{.}}^{\alpha,p}_{q,\omega}(\mathbb H^n)}\lesssim \mathcal C_{4.2}.\|\Omega\|_{L^{(\frac{q^*}{\zeta})'}(S_{Q-1})}.\|f\|_{{\mathop{K}\limits^{.}}^{\alpha^*, p}_{q^*,\omega}(\mathbb H^n)}.\nonumber
\end{align}
Therefore, the proof of the theorem is achieved.
\end{proof}
By using the block decomposition of the weighted Herz space as mentioned in Section \ref{2}, we also obtain the boundedness of rough Hausdorff  operator on the weighted Herz space for the case $0<p<1$. More details, we have the following result.
\begin{theorem}\label{TheoremHerz2}
Let $0<p<1\leq q^*, q <\infty$, $\alpha^* >0,\alpha>0$ and $\sigma>(1-p)/p$ such that
$$\dfrac{1}{q^*}+\dfrac{\alpha^*}{Q}=\dfrac{1}{q}+\dfrac{\alpha}{Q}.
$$
At the same time, let $\omega\in A_{1}$ with the finite critical index $r_{\omega}$ for the reverse H\"{o}lder condition and $q^*>q r_{\omega}/(r_{\omega}-1)$, $\Omega\in L^{{q^*}'}(S_{Q-1}),\delta\in (1,r_\omega)$ and the following condition is true:
$$
\mathcal C_5=\int_{0}^{1/2}\dfrac{\Phi(t)}{t^{1-\frac{Q(\delta-1)}{\delta}(\frac{1}{q^*}+\frac{\alpha^*}{Q})}}|{\rm log}_2(t)|^{\sigma}dt + \int_{1/2}^{\infty}\dfrac{\Phi(t)}{t^{1-\frac{Q}{q^*}-\alpha^*}}({\rm log}_2(t)+1)^{\sigma}dt<\infty.
$$
Then, ${\mathcal H}_{\Phi,\Omega}$ is bounded from ${\mathop{K}\limits^.}^{\alpha^*,p}_{q^*,\omega}(\mathbb H^n)$ to ${\mathop{K}\limits^.}^{\alpha, p}_{q,\omega}(\mathbb H^n)$.
\end{theorem}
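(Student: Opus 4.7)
The plan is to employ the block decomposition of the weighted Herz space provided by Proposition \ref{blockHerz}, which is the natural tool when $0<p<1$, since the Minkowski inequality on the outer sum (used in the proof of Theorem \ref{TheoremHerz1}) is no longer available. Writing $f=\sum_{s\in\mathbb Z}\lambda_s b_s$ with each $b_s$ a central $(\alpha^*,q^*,\omega)$-block supported in $U_s$ and $\sum_s|\lambda_s|^p\lesssim\|f\|_{{\mathop{K}\limits^{.}}^{\alpha^*,p}_{q^*,\omega}(\mathbb H^n)}^p$, and using the $p$-triangle inequality that the Herz quasi-norm enjoys for $0<p<1$, linearity of $\mathcal H_{\Phi,\Omega}$ yields
\[
\|\mathcal H_{\Phi,\Omega}f\|_{{\mathop{K}\limits^{.}}^{\alpha,p}_{q,\omega}(\mathbb H^n)}^p\le\sum_s|\lambda_s|^p\,\|\mathcal H_{\Phi,\Omega}b_s\|_{{\mathop{K}\limits^{.}}^{\alpha,p}_{q,\omega}(\mathbb H^n)}^p.
\]
Hence it suffices to prove the uniform estimate $\|\mathcal H_{\Phi,\Omega}b_s\|_{{\mathop{K}\limits^{.}}^{\alpha,p}_{q,\omega}(\mathbb H^n)}\lesssim\mathcal C_5\,\|\Omega\|_{L^{{q^*}'}(S_{Q-1})}$ for every central block $b_s$.

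Next, fix a block $b_s$ and bound $\|\mathcal H_{\Phi,\Omega}b_s\,\chi_m\|_{L^q_\omega(\mathbb H^n)}$ by the same H\"older / reverse-H\"older mechanism that appears in the proof of Theorem \ref{TheoremHerz1}, which applies here with $\zeta=1$ since $\omega\in A_1$ and $q^*>qr_\omega/(r_\omega-1)$. The support condition $\mathrm{supp}\,b_s\subset U_s$ is essential: for $x\in C_m$ the integrand $b_s(\delta_{t^{-1}|x|_h}y')$ vanishes unless $t\ge|x|_h/2^s$, hence unless $t\ge 2^{m-s-1}$. Together with $\|b_s\|_{L^{q^*}_\omega(\mathbb H^n)}\le\omega(U_s)^{-\alpha^*/Q}$, this gives
\[
\|\mathcal H_{\Phi,\Omega}b_s\,\chi_m\|_{L^q_\omega(\mathbb H^n)}\lesssim\|\Omega\|_{L^{{q^*}'}(S_{Q-1})}\omega(U_s)^{-\alpha^*/Q}\int_{2^{m-s-1}}^{\infty}\frac{\Phi(t)}{t}\frac{\omega(U_m)^{1/q}}{\omega(\delta_{t^{-1}}U_m)^{1/q^*}}\,dt.
\]
Then split the $t$-integral dyadically by $t^{-1}\in(2^j,2^{j+1}]$, so that $\omega(\delta_{t^{-1}}U_m)\simeq\omega(U_{m+j})$ and the cutoff translates into $j\le s-m+1$. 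Raising to the $p$-th power by $(\sum_j a_j)^p\le\sum_j a_j^p$ ($0<p<1$, $a_j\ge 0$), swapping the order of summation in $m$ and $j$, and invoking Proposition \ref{pro2.3DFan}---the $A_1$ bound for $j\le 0$ and the $RH_\delta$ bound for $j\ge 1$---together with the identity $\alpha/Q+1/q=\alpha^*/Q+1/q^*=:\kappa$ and the assumption $\alpha^*>0$, the inner sum over $m\le s-j+1$ of $\omega(U_m)^{p\alpha^*/Q}$ is comparable to $\omega(U_{s-j+1})^{p\alpha^*/Q}$, and a further application of Proposition \ref{pro2.3DFan} compares $\omega(U_{s-j+1})$ with $\omega(U_s)$. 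All $\omega(U_s)$ factors cancel, and after bookkeeping the geometric prefactors one arrives at
\[
\|\mathcal H_{\Phi,\Omega}b_s\|_{{\mathop{K}\limits^{.}}^{\alpha,p}_{q,\omega}(\mathbb H^n)}^p\lesssim\|\Omega\|_{L^{{q^*}'}(S_{Q-1})}^p\sum_j W_j^p\,(|j|+1)^{-p\sigma}\,\Phi_j^p,
\]
where $\Phi_j=\int_{I_j}\Phi(t)\,dt$ for $I_j=\{t:t^{-1}\in(2^j,2^{j+1}]\}$ and $W_j$ is the discretization of the integrand of $\mathcal C_5$ on $I_j$, so that $\sum_j W_j\Phi_j\simeq\mathcal C_5$.

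The heart of the argument---and what I expect to be the main obstacle---is the concluding passage from $\sum_j(\cdot)^p$ to $(\sum_j\cdot)^p\simeq\mathcal C_5^p$. This will be carried out by H\"older's inequality for sequences with the conjugate exponents $1/p$ and $1/(1-p)$:
\[
\sum_j (W_j\Phi_j)^p\,(|j|+1)^{-p\sigma}\le\Bigl(\sum_j W_j\Phi_j\Bigr)^p\Bigl(\sum_j(|j|+1)^{-p\sigma/(1-p)}\Bigr)^{1-p}.
\]
The residual sum converges exactly when $p\sigma/(1-p)>1$, i.e.\ when $\sigma>(1-p)/p$, which is precisely the standing hypothesis. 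This step absorbs the logarithmic correction $(\log_2 t)^\sigma$ carried by $W_j$ in $\mathcal C_5$ and yields the uniform bound $\|\mathcal H_{\Phi,\Omega}b_s\|_{{\mathop{K}\limits^{.}}^{\alpha,p}_{q,\omega}(\mathbb H^n)}\lesssim\mathcal C_5\|\Omega\|_{L^{{q^*}'}(S_{Q-1})}$ that closes the proof.
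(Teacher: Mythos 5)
Your proposal is correct, and its skeleton coincides with the paper's: decompose $f$ into central $(\alpha^*,q^*,\omega)$-blocks via Proposition \ref{blockHerz}, reduce to a uniform estimate on a single block, split the $t$-integral dyadically, compare weights through Proposition \ref{pro2.3DFan} (with $\zeta=1$ for $A_1$ and the $RH_\delta$ exponent $(\delta-1)/\delta$), and close with exactly the H\"older step you flag as the heart of the matter --- the paper performs the identical $\sum_j|c_{kj}|^p\lesssim\big(\sum_j|j|^\sigma|c_{kj}|\big)^p$ maneuver in its $K_1+K_2$ estimate, using $\sigma>(1-p)/p$ in the same way. Where you genuinely diverge is the middle of the argument. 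The paper never sums over annuli at all: it writes ${\mathcal B}_{\Phi,\Omega}(b_k)=\sum_j g_{kj}$ with ${\rm supp}(g_{kj})\subset U_{k+j}$, normalizes $h_{kj}=g_{kj}/c_{kj}$, observes that each $h_{kj}$ is itself a central $(\alpha,q,\omega)$-block, and then invokes the \emph{synthesis} direction of Proposition \ref{blockHerz} to convert the whole problem into $\big(\sum_j|c_{kj}|^p\big)^{1/p}\lesssim\mathcal C_5$; the support bookkeeping is thereby automatic and no cutoff in $t$ is ever used. You instead estimate the Herz norm of $\mathcal H_{\Phi,\Omega}b_s$ annulus by annulus, which forces you to exploit the support restriction $t\gtrsim 2^{m-s}$ (otherwise the sum over annuli diverges, since $\alpha^*>0$ makes $\omega(U_m)^{p\alpha^*/Q}$ grow) and to carry out the geometric summation $\sum_{m\le s-j+1}\omega(U_m)^{p\alpha^*/Q}\lesssim\omega(U_{s-j+1})^{p\alpha^*/Q}$, legitimate by the reverse H\"older decay in Proposition \ref{pro2.3DFan} together with $\alpha^*>0$. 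I checked your exponent bookkeeping: with $\kappa=1/q^*+\alpha^*/Q$ the net dyadic factors come out to $2^{-jpQ\kappa(\delta-1)/\delta}$ for $j\ge1$ and $2^{-jpQ\kappa}$ for $j\le0$, which are precisely the power weights in the two integrals of $\mathcal C_5$, so your $W_j$ discretization claim is sound. The trade-off: the paper's re-blocking is cleaner and uses Proposition \ref{blockHerz} in both directions, while your route uses only its analysis direction (plus the $p$-triangle inequality of the quasi-norm, where the paper uses the sublinear majorant ${\mathcal B}_{\Phi,\Omega}$ instead --- an equivalent device) at the price of the extra cutoff and geometric-summation bookkeeping, which you correctly identified as essential and which does go through.
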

\begin{proof}
Let $f\in {\mathop{K}\limits^.}^{\alpha^*,p}_{q^*,\omega}(\mathbb H^n)$. By Proposition \ref{blockHerz}, we have a block decomposition for $f$ as follows

$$f=\sum\limits_{k=-\infty}^{\infty}\lambda_k.b_k,$$
where $\textit{\rm and}\, \Big(\sum\limits_{k=-\infty}^{\infty}|\lambda_k|^p\Big)^{1/p}\lesssim \|f\|_{{\mathop{K}\limits^.}^{\alpha^*,p}_{q^*,\omega}(\mathbb H^n)}.$
Note that, for $k\in\mathbb Z$, $b_k$ is central $(\alpha^*,q^*,\omega)$-block satisfying
$${\rm supp} (b_k)\subset U_k\,\,\textit{\rm and}\,\,\|b_k\|_{L^{q^*}_\omega(\mathbb H^n)}\leq \omega(U_k)^{\frac{-\alpha^*}{Q}}.
$$
For convenience, we set
$${\mathcal B}_{\Phi,\Omega}(b_k)(x)=\int_{0}^{\infty}\int_{S_{Q-1}} \dfrac{\Phi(t)}{t}|\Omega(y')|.|b_k(\delta_{t^{-1}|x|_h}y')|dy'dt.
$$
Then, we estimate $|{\mathcal H}_{\Phi,\Omega}(f)(x)|\leq \sum\limits_{k=-\infty}^{\infty}|\lambda_k|.{\mathcal B}_{\Phi,\Omega}(b_k)(x).$
\vskip 5pt
Now, it is necessary  to prove that 
\begin{align}\label{BomgeaK}
\|{\mathcal B}_{\Phi,\Omega}(b_k)\|_{{\mathop{K}\limits^.}^{\alpha,p}_{q,\omega}(\mathbb H^n)}\lesssim \mathcal C_5,\,\,\textit{\rm for all}\,k\in\mathbb Z,
\end{align}
which leads the desired result, that is, 
\begin{align}
\|{\mathcal H}_{\Phi,\Omega}(f)\|_{{\mathop{K}\limits^.}^{\alpha,p}_{q,\omega}(\mathbb H^n)}&\leq \Big(\sum\limits_{k=-\infty}^{\infty}|\lambda_k|^p.\|{\mathcal B}_{\Phi,\Omega}(b_k)\|_{{\mathop{K}\limits^.}^{\alpha,p}_{q,\omega}(\mathbb H^n)}^p\Big)^{\frac{1}{p}}\nonumber
\\
&\lesssim \mathcal C_5\Big(\sum\limits_{k=-\infty}^{\infty}|\lambda_k|^p\Big)^{1/p}\nonumber\\
&\lesssim \mathcal C_5\|f\|_{{\mathop{K}\limits^.}^{\alpha^*,p}_{q^*,\omega}(\mathbb H^n)}.\nonumber
\end{align}
To begin with the estimation (\ref{BomgeaK}), we set
\begin{align}\label{sumgkj}
{\mathcal B}_{\Phi,\Omega}(b_k)(x)&=\sum\limits_{j\in\mathbb Z}\int_{(2^{j-1}, 2^j]}\int_{S_{Q-1}} \dfrac{\Phi(t)}{t}|\Omega(y')|.|b_k(\delta_{t^{-1}|x|_h}y')|dy'dt\nonumber\\
&:=\sum\limits_{j\in\mathbb Z} g_{kj}(x).
\end{align}
Since ${\rm supp}(b_k)\subset U_k$, it is not hard to check that 
\begin{align}\label{suppgkj}
{\rm supp}(g_{kj})\subset U_{k+j}.
\end{align}
Thus, by the Minkowski inequality, we get
$$
\|g_{kj}\|_{L^{q}_\omega(\mathbb H^n)}\leq \int_{(2^{j-1},2^j]}\int_{S_{Q-1}} \dfrac{\Phi(t)}{t}|\Omega(y')|\Big(\int_{U_{k+j}}|b_k(\delta_{t^{-1}|x|_h}y')|^{q}\omega(x) dx\Big)^{1/q}dy'dt.
$$
By making as (\ref{HfBRMorrey}) above and  ${\rm supp}(b_k)\subset U_k$, we obtain
\begin{align}
\|g_{kj}\|_{L^{q}_\omega(\mathbb H^n)}&\lesssim \|\Omega\|_{L^{{q^*}'}(S_{Q-1})}\int_{(2^{j-1}, 2^j]}\dfrac{\Phi(t)}{t}\dfrac{\omega(U_{k+j})^{\frac{1}{q}}}{\omega(\delta_{t^{-1}}U_{k+j})^{\frac{1}{q^*}}}\|b_k\|_{L_\omega^{q^*}(\delta_{t^{-1}}U_{k+j})}dt.\nonumber
\\
&\leq\|\Omega\|_{L^{{q^*}'}(S_{Q-1})}\int_{(2^{j-1}, 2^j]}\dfrac{\Phi(t)}{t}\dfrac{\omega(U_{k+j})^{\frac{1}{q}}}{\omega(U_k)^{\frac{1}{q^*}}}\|b_k\|_{L_\omega^{q^*}(U_k)}dt.
\end{align}
By having the inequality $\|b_k\|_{L^{q^*}_\omega(U_k)}\leq \omega(U_k)^{\frac{-\alpha^*}{Q}}$ and the relation $\frac{1}{q*}+\frac{\alpha^*}{Q}=\frac{1}{q}+\frac{\alpha}{Q}$, we infer
\begin{align}\label{gkjHerz}
\|g_{kj}\|_{L^{q}_\omega(\mathbb H^n)}&\lesssim \|\Omega\|_{L^{{q^*}'}(S_{Q-1})}\Big(\int_{(2^{j-1}, 2^j]}\dfrac{\Phi(t)}{t}dt\Big)\dfrac{\omega(U_{k+j})^{\frac{1}{q}}}{\omega(U_k)^{\frac{1}{q^*}+\frac{\alpha^*}{Q}}}.\nonumber
\\
&\lesssim \|\Omega\|_{L^{{q^*}'}(S_{Q-1})}\Big(\int_{(2^{j-1}, 2^j]}\dfrac{\Phi(t)}{t}dt\Big)\Big(\dfrac{\omega(U_{k+j})}{\omega(U_k)}\Big)^{{\frac{1}{q^*}+\frac{\alpha^*}{Q}}}\omega(U_{k+j})^{\frac{-\alpha}{Q}}.
\end{align}
From Proposition \ref{pro2.3DFan}, it follows that
\begin{align}\label{BkjBk}
\Big(\dfrac{\omega(U_{k+j})}{\omega(U_{k})}\Big)^{\frac{1}{q^*}+\frac{\alpha^*}{Q}} \lesssim \left\{ \begin{array}{l}
\Big(\dfrac{|U_{k+j}|}{|U_{k}|}\Big)^{\frac{1}{q^*}+\frac{\alpha^*}{Q}}\lesssim 2^{jQ(\frac{1}{q^*}+\frac{\alpha^*}{Q})},\,\,\,\,\,\,\,\,\,\,\,\,\,\,\,\,\,\,\,\,\,\,\,\,\,\,\textit{\rm if}\,j\geq 0,
\\
\\
\Big(\dfrac{|U_{k+j}|}{|U_{k}|}\Big)^{\frac{(\delta-1)}{\delta}(\frac{1}{q^*}+\frac{\alpha^*}{Q})}\lesssim 2^{jQ\frac{(\delta-1)}{\delta}(\frac{1}{q^*}+\frac{\alpha^*}{Q})},\,\textit{\rm otherwise}.
\end{array} \right.
\end{align}
For simplicity, we write
\begin{align}
c_{kj}=\left\{ \begin{array}{l}
\|\Omega\|_{L^{{q^*}'}(S_{Q-1})}\Big(\int_{(2^{j-1}, 2^j]}\dfrac{\Phi(t)}{t}dt\Big)2^{jQ(\frac{1}{q^*}+\frac{\alpha^*}{Q})},\,\,\,\,\,\,\,\,\,\,\,\textit{\rm if}\,j\geq 0,
\\
\\
\|\Omega\|_{L^{{q^*}'}(S_{Q-1})}\Big(\int_{(2^{j-1}, 2^j]}\dfrac{\Phi(t)}{t}dt\Big)2^{jQ\frac{(\delta-1)}{\delta}(\frac{1}{q^*}+\frac{\alpha^*}{Q})},\,\textit{\rm otherwise}.\nonumber
\end{array} \right.
\end{align}
Then, by (\ref{gkjHerz}) and (\ref{BkjBk}),  we have
\begin{align}\label{gkjLq*}
\|g_{kj}\|_{L^{q}_\omega(\mathbb H^n)}\lesssim c_{kj}\omega(U_{k+j})^{\frac{-\alpha}{Q}}.
\end{align}
\\
On the other hand, from the definition of $c_{kj}$, we put
\begin{align}
h_{kj}=\left\{ \begin{array}{l}
\dfrac{g_{kj}}{c_{kj}},\,\,\,\,\,\,\,\,\,\,\,\textit{\rm if }\,c_{kj}\neq 0,
\\
\\
g_{kj}=0,\,\,\textit{\rm if }\,c_{kj}= 0.\nonumber
\end{array} \right.
\end{align}
From this, by (\ref{sumgkj}), it is easy to obtain
$$
{\mathcal B}_{\Phi,\Omega}(b_k)(x)=\sum\limits_{j\in\mathbb Z}c_{kj}.h_{kj}(x). 
$$
Moreover, each $h_{kj}$ is a central $(\alpha,q,\omega)$-block. 
Indeed, by (\ref{suppgkj}) and (\ref{gkjLq*}), we have ${\rm supp}(h_{kj})\subset U_{k+j}$ and $\|h_{kj}\|_{L^{q}_\omega(\mathbb H^n)}\lesssim \omega(U_{k+j})^{\frac{-\alpha}{Q}}.$ As an application, by Proposition \ref{blockHerz}, to prove the estimation (\ref{BomgeaK}) above, we need to make that
\begin{align}\label{sumckj}
\Big(\sum\limits_{j=-\infty}^{\infty}|c_{kj}|^p\Big)^{1/p}\lesssim \mathcal C_5,\,\,\textit{\rm for all}\,k\in\mathbb Z.
\end{align}
In fact, we decompose 
\begin{align}\label{K1K2}
\sum\limits_{j=-\infty}^{\infty}|c_{kj}|^p&=\sum\limits_{j= 0}^{\infty}|c_{kj}|^p+\sum\limits_{j=-\infty}^{-1}|c_{kj}|^p\nonumber\\
&:=K_1+K_2.
\end{align}
Since $\sigma>(1-p)/p$ and the H\"{o}lder inequality, we get
\begin{align}\label{K1esti}
K_1&\lesssim \Big(\sum\limits_{j= 0}^{\infty}j^{\sigma}|c_{kj}|\Big)^{p}\nonumber
\\
&=\Big(\sum\limits_{j=0}^{\infty}j^{\sigma}\|\Omega\|_{L^{{q^*}'}(S_{Q-1})}\Big(\int_{(2^{j-1}, 2^j]}\dfrac{\Phi(t)}{t}dt\Big)2^{jQ(\frac{1}{q^*}+\frac{\alpha^*}{Q})}\Big)^{p}\nonumber
\\
&\lesssim \|\Omega\|^p_{L^{{q^*}'}(S_{Q-1})}\Big(\sum\limits_{j=0}^{\infty}\int_{(2^{j-1}, 2^j]}\dfrac{\Phi(t)}{t^{1-\frac{Q}{q^*}-\alpha^*}}({\rm log}_2(t)+1)^{\sigma}dt\Big)^{p}\nonumber
\\
&=\|\Omega\|^p_{L^{{q^*}'}(S_{Q-1})}\Big(\int_{1/2}^{\infty}\dfrac{\Phi(t)}{t^{1-\frac{Q}{q^*}-\alpha^*}}({\rm log}_2(t)+1)^{\sigma}dt\Big)^{p}.
\end{align}
By estimating as above, we also have
\begin{align}\label{K2esti}
K_2&\lesssim \Big(\sum\limits_{j=-\infty}^{-1}|j|^{\sigma}|c_{kj}|\Big)^{p}\nonumber
\\
&=\Big(\sum\limits_{j=-\infty}^{-1}|j|^{\sigma}\|\Omega\|_{L^{{q^*}'}(S_{Q-1})}\Big(\int_{(2^{j-1}, 2^j]}\dfrac{\Phi(t)}{t}dt\Big)2^{jQ\frac{(\delta-1)}{\delta}(\frac{1}{q^*}+\frac{\alpha^*}{Q})}\Big)^{p}\nonumber
\\
&\lesssim \|\Omega\|^p_{L^{{q^*}'}(S_{Q-1})}\Big(\sum\limits_{j=-\infty}^{-1}\int_{(2^{j-1}, 2^j]}\dfrac{\Phi(t)}{t^{1-\frac{Q(\delta-1)}{\delta}(\frac{1}{q^*}+\frac{\alpha^*}{Q})}}|{\rm log}_2(t)|^{\sigma}dt\Big)^{p}\nonumber
\\
&=\|\Omega\|^p_{L^{{q^*}'}(S_{Q-1})}\Big(\int_{0}^{1/2}\dfrac{\Phi(t)}{t^{1-\frac{Q(\delta-1)}{\delta}(\frac{1}{q^*}+\frac{\alpha^*}{Q})}}|{\rm log}_2(t)|^{\sigma}dt\Big)^{p}.
\end{align}
Therefore, by (\ref{K1K2})-(\ref{K2esti}), we obtain 
\begin{align}
\Big(\sum\limits_{j=-\infty}^{\infty}|c_{kj}|^p\Big)^{1/p}&\lesssim \|\Omega\|_{L^{{q^*}'}(S_{Q-1})}\Big\{\Big(\int_{1/2}^{\infty}\dfrac{\Phi(t)}{t^{1-\frac{Q}{q^*}-\alpha^*}}({\rm log}_2(t)+1)^{\sigma}dt\Big)^{p}+\nonumber
\\
&\,\,\,\,\,\,\,\,\,\,\,\,\,\,\,\,\,\,\,\,\,\,\,\,\;\;\;\;\;\;\;\;\;\;\;\;\;\;\;\;\;+\Big(\int_{0}^{1/2}\dfrac{\Phi(t)}{t^{1-\frac{Q(\delta-1)}{\delta}(\frac{1}{q^*}+\frac{\alpha^*}{Q})}}|{\rm log}_2(t)|^{\sigma}dt\Big)^{p}\Big\}^{1/p}\nonumber\\
&\simeq \mathcal C_5.\|\Omega\|_{L^{{q^*}'}(S_{Q-1})},\nonumber
\end{align}
which implies the inequality (\ref{sumckj}) above. Thus, the proof of theorem is completed.
\end{proof}
\begin{theorem}\label{TheoremHerz3}
Let $0<p<1\leq q<\infty$, $\alpha\in\mathbb R^+$, $\Omega\in L^{{q}'}(S_{Q-1})$, $\sigma>(1-p)/p$ and $\omega(x)=|x|^{\gamma}$ with $\gamma\in (-Q,0]$. If the following condition holds,
$$
\mathcal C_6=\int_{1}^{\infty}\dfrac{\Phi(t)}{t^{1-\frac{Q+\gamma}{q}-\frac{(Q+\gamma)\alpha}{Q}}}({\rm log}_2(t)+1)^{\sigma}dt +\int_{0}^{1}\dfrac{\Phi(t)}{t^{1-\frac{Q+\gamma}{q}-\frac{(Q+\gamma)\alpha}{Q}}}|{\rm log}_2(t)|^{\sigma}dt<\infty,
$$
then ${\mathcal H}_{\Phi,\Omega}$ is bounded from ${\mathop{K}\limits^.}^{\alpha,p}_{q,\omega}(\mathbb H^n)$ to itself.
\end{theorem}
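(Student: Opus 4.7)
The plan is to follow the same block-decomposition strategy as in Theorem \ref{TheoremHerz2}, but exploit the explicit form of the power weight to avoid the $\delta/\zeta$ case split and to produce the single exponent $\tfrac{Q+\gamma}{q}+\tfrac{(Q+\gamma)\alpha}{Q}$ appearing in $\mathcal{C}_6$. Since $\gamma\in(-Q,0]$, the power weight $\omega(x)=|x|_h^\gamma$ lies in $A_1(\mathbb{H}^n)$, so Proposition~\ref{blockHerz} applies: write $f=\sum_{k\in\mathbb{Z}}\lambda_k b_k$ with central $(\alpha,q,\omega)$-blocks $b_k$ supported in $U_k$ satisfying $\|b_k\|_{L^q_\omega(\mathbb{H}^n)}\leq \omega(U_k)^{-\alpha/Q}$ and $(\sum_k |\lambda_k|^p)^{1/p}\lesssim \|f\|_{\dot K^{\alpha,p}_{q,\omega}(\mathbb{H}^n)}$. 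Because $p<1$, we have $|\mathcal{H}_{\Phi,\Omega}(f)(x)|\leq \sum_k |\lambda_k|\,\mathcal{B}_{\Phi,\Omega}(b_k)(x)$ and hence $\|\mathcal{H}_{\Phi,\Omega}(f)\|^p_{\dot K^{\alpha,p}_{q,\omega}}\leq \sum_k |\lambda_k|^p \|\mathcal{B}_{\Phi,\Omega}(b_k)\|^p_{\dot K^{\alpha,p}_{q,\omega}}$, so the whole task reduces to the uniform block estimate $\|\mathcal{B}_{\Phi,\Omega}(b_k)\|_{\dot K^{\alpha,p}_{q,\omega}(\mathbb{H}^n)}\lesssim \mathcal{C}_6\,\|\Omega\|_{L^{q'}(S_{Q-1})}$ for every $k\in\mathbb{Z}$.

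Next, I would dyadically split the $t$-integral, writing $\mathcal{B}_{\Phi,\Omega}(b_k)=\sum_{j\in\mathbb{Z}} g_{kj}$ where $g_{kj}$ integrates $t\in(2^{j-1},2^j]$. Since $\mathrm{supp}(b_k)\subset U_k$ and $\delta_{t^{-1}|x|_h}y'\in U_k$ forces $|x|_h\leq t\,2^k\lesssim 2^{k+j}$, we get $\mathrm{supp}(g_{kj})\subset U_{k+j}$. Repeating the polar-coordinate computation used in the proofs of Theorems~\ref{TheoremMorrey} and \ref{TheoremHerz} (with $\omega(B_R)\simeq R^{Q+\gamma}$), one obtains
\[
\|g_{kj}\|_{L^q_\omega(\mathbb{H}^n)}\lesssim \|\Omega\|_{L^{q'}(S_{Q-1})}\Bigl(\int_{2^{j-1}}^{2^j}\tfrac{\Phi(t)}{t^{1-(Q+\gamma)/q}}dt\Bigr)\cdot 2^{k(Q+\gamma)(-\alpha/Q)},
\]
which, together with $\omega(U_{k+j})^{-\alpha/Q}\simeq 2^{-(k+j)(Q+\gamma)\alpha/Q}$, can be rewritten as $\|g_{kj}\|_{L^q_\omega}\lesssim c_{kj}\,\omega(U_{k+j})^{-\alpha/Q}$ with
\[
c_{kj}=\|\Omega\|_{L^{q'}(S_{Q-1})}\Bigl(\int_{2^{j-1}}^{2^j}\tfrac{\Phi(t)}{t}dt\Bigr)\,2^{j(Q+\gamma)(1/q+\alpha/Q)}.
\]
Setting $h_{kj}=g_{kj}/c_{kj}$, each $h_{kj}$ is a central $(\alpha,q,\omega)$-block supported in $U_{k+j}$, so by Proposition~\ref{blockHerz} again $\|\mathcal{B}_{\Phi,\Omega}(b_k)\|_{\dot K^{\alpha,p}_{q,\omega}(\mathbb{H}^n)}\lesssim \bigl(\sum_{j\in\mathbb{Z}}|c_{kj}|^p\bigr)^{1/p}$, and crucially the right-hand side is independent of $k$.

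Finally, to bound $\sum_j |c_{kj}|^p$ I would split into $j\geq 0$ and $j<0$ and apply H\"older's inequality with the weights $(j+1)^{-\sigma p/(1-p)}$, respectively $|j|^{-\sigma p/(1-p)}$, which are summable exactly because $\sigma>(1-p)/p$. On the range $t\in(2^{j-1},2^j]$ the factor $(\log_2(t)+1)^\sigma$ (respectively $|\log_2(t)|^\sigma$) is comparable to $(j+1)^\sigma$ (respectively $|j|^\sigma$), so H\"older's inequality converts
\[
\Bigl(\sum_{j\geq 0}|c_{kj}|^p\Bigr)^{1/p}\lesssim \|\Omega\|_{L^{q'}}\int_1^\infty \tfrac{\Phi(t)}{t^{1-(Q+\gamma)/q-(Q+\gamma)\alpha/Q}}\bigl(\log_2(t)+1\bigr)^\sigma dt,
\]
with the analogous estimate for $j<0$ producing the $\int_0^1$ term, giving $(\sum_j |c_{kj}|^p)^{1/p}\lesssim \mathcal{C}_6\,\|\Omega\|_{L^{q'}(S_{Q-1})}$, which completes the proof. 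The main obstacle, exactly as in Theorem~\ref{TheoremHerz2}, is this last H\"older step: because $p<1$ one cannot simply pull $\sum_j$ inside $(\cdot)^{1/p}$, and the auxiliary weight $j^\sigma$ (requiring $\sigma>(1-p)/p$) is what makes the passage from an $\ell^p$ sum of dyadic integrals back to a single integral against $\Phi$ possible.
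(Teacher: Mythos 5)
Your proposal is correct and follows essentially the same route as the paper's own proof: block decomposition via Proposition \ref{blockHerz}, $p$-subadditivity of the quasi-norm for $0<p<1$, dyadic splitting ${\mathcal B}_{\Phi,\Omega}(b_k)=\sum_{j}g_{kj}$ with ${\rm supp}(g_{kj})\subset U_{k+j}$, normalization of the pieces into central $(\alpha,q,\omega)$-blocks with coefficients comparable to your $c_{kj}$ (the paper likewise uses the explicit power-weight relation $\omega(U_{k+j})/\omega(U_k)\simeq 2^{j(Q+\gamma)}$ in place of the $\delta/\zeta$ case split of Theorem \ref{TheoremHerz2}), and the concluding H\"older step with the auxiliary weight $|j|^{\sigma}$, which is exactly where $\sigma>(1-p)/p$ enters. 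Your treatment of $j=0$ via the weight $(j+1)^{\sigma}$ is marginally tidier than the paper's (which assigns $j=0$ the weight $|j|^{\sigma}=0$), but otherwise the two arguments coincide.
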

\begin{proof}
Let us recall that ${\mathcal B}_{\Phi,\Omega}$ and $b_k$ are defined as in the proof of Theorem \ref{TheoremHerz2}. Then, we need to prove that
\begin{align}\label{wideckjhkj}
{\mathcal B}_{\Phi,\Omega}(b_k)=\sum\limits_{j=-\infty}^{\infty} {\widetilde c}_{kj}.{\widetilde h}_{kj},
\end{align}
where every ${\widetilde h}_{kj}$ is central $(\alpha,q,\omega)$-block, and 
\begin{align}\label{wideckj}
\Big(\sum\limits_{j=-\infty}^{\infty}|{\widetilde c}_{kj}|^p\Big)^{1/p}\lesssim \mathcal C_6,\,\,\textit{\rm for all}\,k\in\mathbb Z.
\end{align}
By setting $g_{kj}$ as in (\ref{sumgkj}), we will have the relation (\ref{suppgkj}). Thus, by a similar reason as (\ref{HfLebMorrey}) above, we also have
\begin{align}
\|g_{kj}\|_{L^q_\omega(\mathbb H^n)}&=\|g_{kj}\|_{L^q_\omega(U_{k+j})}\nonumber\\
&\lesssim \|\Omega\|_{L^{q'}(S_{Q-1})}\int_{(2^{j-1},2^j]}\dfrac{\Phi(t)}{t^{1-\frac{Q+\gamma}{q}}}\|b_k\|_{L^q_\omega(\delta_{t^{-1}}U_{k+j})}dt.\nonumber
\end{align}
For $t\in (2^{j-1},2^j]$, by applying ${\rm supp}(b_k)\subset U_k$ and $\|b_k\|_{L^q_\omega(U_k)}\leq \omega(U_k)^{-\alpha/Q}$, it follows that
$$
\|b_k\|_{L^q_\omega(\delta_{t^{-1}}U_{k+j})}\leq \|b_k\|_{L^q_\omega(U_{k+1})}=\|b_k\|_{L^q_\omega(U_k)}\leq \omega(U_k)^{-\alpha/Q},
$$
Furthermore, by (\ref{omegaBr}), we have $\frac{\omega(U_{k+j})}{\omega(U_k)}\simeq 2^{j(Q+\gamma)}$. Therefore,
\begin{align}
\|g_{kj}\|_{L^q_\omega(\mathbb H^n)}&\lesssim \|\Omega\|_{L^{q'}(S_{Q-1})}\Big(\int_{(2^{j-1},2^j]}\dfrac{\Phi(t)}{t^{1-\frac{Q+\gamma}{q}}}dt\Big).\Big(\dfrac{\omega(U_{k+j})}{\omega(U_k)}\Big)^{\alpha/Q}\omega(U_{k+j})^{-\alpha/Q}\nonumber
\\
&\lesssim \|\Omega\|_{L^{q'}(S_{Q-1})}\Big(\int_{(2^{j-1},2^j]}\dfrac{\Phi(t)}{t^{1-\frac{Q+\gamma}{q}}}dt\Big).2^{j(Q+\gamma)\alpha/Q}\omega(U_{k+j})^{-\alpha/Q}.\nonumber
\end{align}
Now, for simplicity  we write
 $$\widetilde{c}_{kj}=\|\Omega\|_{L^{q'}(S_{Q-1})}\Big(\int_{(2^{j-1},2^j]}\dfrac{\Phi(t)}{t^{1-\frac{Q+\gamma}{q}}}dt\Big).2^{j(Q+\gamma)\alpha/Q}$$
and
\begin{align}
\widetilde{h}_{kj}=\left\{ \begin{array}{l}
\dfrac{g_{kj}}{\widetilde{c}_{kj}},\,\,\,\,\,\,\,\,\,\,\,\textit{\rm if }\,\widetilde{c}_{kj}\neq 0,
\\
\\
g_{kj}=0,\,\,\textit{\rm if }\,\widetilde{c}_{kj}= 0.\nonumber
\end{array} \right.
\end{align}
Thus, by (\ref{sumgkj}), it is easy to get the decomposition (\ref{wideckjhkj}), and each $\widetilde{h}_{kj}$ is a central $(\alpha, q,\omega)$-block with ${\rm supp}(\widetilde{h}_{kj})\subset U_{k+j}$ and $\|\widetilde{h}_{kj}\|_{L^{q}_\omega(\mathbb H^n)}\lesssim \omega(U_{k+j})^{-\alpha/Q}.$
\vskip 5pt
Next, by $\sigma>(1-p)/p$ and the H\"{o}lder inequality, we will infer as follow 
\begin{align}
\sum\limits_{j=-\infty}^{\infty}|{\widetilde c}_{kj}|^p &\lesssim \Big(\sum\limits_{j=-\infty}^{\infty}|j|^{\sigma}|\widetilde{c}_{kj}|\Big)^{p}\nonumber
\\
&=\Big(\sum\limits_{j=-\infty}^{\infty}|j|^{\sigma}\|\Omega\|_{L^{q'}(S_{Q-1})}\Big(\int_{(2^{j-1},2^j]}\dfrac{\Phi(t)}{t^{1-\frac{Q+\gamma}{q}}}dt\Big).2^{j(Q+\gamma)\alpha/Q}\Big)^p\nonumber
\\
&\lesssim \|\Omega\|^p_{L^{q'}(S_{Q-1})}\Big(\sum\limits_{j=1}^{\infty}\int_{(2^{j-1},2^j]}\dfrac{\Phi(t)}{t^{1-\frac{Q+\gamma}{q}}}|j|^{\sigma}2^{j(Q+\gamma)\alpha/Q}dt +\nonumber
\\
&\,\,\,\,\,\,\,\,\,\,\,\,\,\,\,\,\,\,\,\,\,\,\,\,\,\,\,\,\,\,\,\,\,\,\,\,\,\,\,\,\,\,\,\,\,\;\;\;\;\;\;\;\;\;\;\;\;+\sum\limits_{j=-\infty}^{0}\int_{(2^{j-1},2^j]}\dfrac{\Phi(t)}{t^{1-\frac{Q+\gamma}{q}}}|j|^{\sigma}2^{j(Q+\gamma)\alpha/Q}dt\Big)^p.\nonumber
\end{align}
From this, we have
\begin{align}
&\Big(\sum\limits_{j=-\infty}^{\infty}|{\widetilde c}_{kj}|^p\Big)^{1/p}\lesssim \|\Omega\|_{L^{q'}(S_{Q-1})}\Big(\sum\limits_{j=1}^{\infty}\int_{(2^{j-1},2^j]}\dfrac{\Phi(t)}{t^{1-\frac{Q+\gamma}{q}-\frac{(Q+\gamma)\alpha}{Q}}}({\rm log}_2(t)+1)^{\sigma}dt\,+ \nonumber
\\
&\,\,\,\,\,\,\,\,\,\,\,\,\,\,\,\,\,\,\,\,\,\,\,\,\,\,\,\,\,\,\,\,\,\,\,\,\,\,\,\,\,\,\,\,\,\,\,\,\,\,\,\,\,\,\,\,\,\,\,\,\,\,\,\,\,\,\,\,\,\,\,\,\,\,\,\,\,\,\,\,\,\,\,\,\,\,\,\,\,\,\,\,\,\,\,\,+\sum\limits_{j=-\infty}^{0}\int_{(2^{j-1},2^j]}\dfrac{\Phi(t)}{t^{1-\frac{Q+\gamma}{q}-\frac{(Q+\gamma)\alpha}{Q}}}|{\rm log}_2(t)|^{\sigma}dt\Big)\nonumber
\\
&\lesssim \|\Omega\|_{L^{q'}(S_{Q-1})}\Big(\int_{1}^{\infty}\dfrac{\Phi(t)}{t^{1-\frac{Q+\gamma}{q}-\frac{(Q+\gamma)\alpha}{Q}}}({\rm log}_2(t)+1)^{\sigma}dt +\int_{0}^{1}\dfrac{\Phi(t)}{t^{1-\frac{Q+\gamma}{q}-\frac{(Q+\gamma)\alpha}{Q}}}|{\rm log}_2(t)|^{\sigma}dt\Big).\nonumber
\end{align}
This finishes the proof of the inequality (\ref{wideckj}). Moreover, by Proposition \ref{blockHerz}, we obtain
\begin{align}
\|{\mathcal H}_{\Phi,\Omega}(f)\|_{{\mathop{K}\limits^.}^{\alpha,p}_{q,\omega}(\mathbb H^n)}\lesssim \mathcal C_6.\|\Omega\|_{L^{q'}(S_{Q-1})}.\|f\|_{{\mathop{K}\limits^.}^{\alpha,p}_{q,\omega}(\mathbb H^n)}.\nonumber
\end{align}
Therefore, the proof of the theorem is completed.
\end{proof}
\begin{theorem}\label{TheoremMorreyHerz}
Let $0<p<\infty$, $1<q<\infty$, $\gamma\in\mathbb R$, $\lambda>0$, $\omega(x)=|x|_h^{\gamma}$ and $\Omega\in L^{q'}(S_{Q-1})$. Then, ${\mathcal H}_{\Phi,\Omega}$ is  a bounded operator from ${MK}^{\alpha,\lambda}_{p, q, \omega}(\mathbb H^n)$ to itself if and only if 
$$
\mathcal C_{7}= \int_{0}^{\infty} {\dfrac{\Phi(t)}{t^{1-\alpha-\frac{Q+\gamma}{q}+\lambda}}}dt <  + \infty.
$$
Furthermore, ${\big\|{\mathcal H}_{\Phi,\Omega}\big\|_{{MK}^{\alpha,\lambda}_{p, q, \omega}(\mathbb H^n)\to {MK}^{\alpha,\lambda}_{p, q, \omega}(\mathbb H^n)}\simeq \mathcal C_{7}}.\|\Omega\|_{L^{q'}(S_{Q-1})}.$
\end{theorem}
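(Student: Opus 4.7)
The plan is to follow the two-sided strategy used in the proof of Theorem \ref{TheoremHerz}, adapting the weighted Herz computation to the extra supremum-over-$k_0$ structure that defines the Morrey-Herz norm. For sufficiency, I would first repeat the Minkowski--H\"older--polar-coordinates computation of \eqref{HfMorrey0}--\eqref{HfLebMorrey} together with the dilation trick that led to \eqref{HfMHerzchik}. This gives, for every $k\in\mathbb{Z}$,
\begin{equation*}
\|\mathcal{H}_{\Phi,\Omega}(f)\chi_k\|_{L^q_\omega(\mathbb H^n)}\lesssim \|\Omega\|_{L^{q'}(S_{Q-1})}\int_0^\infty \frac{\Phi(t)}{t^{1-(Q+\gamma)/q}}\bigl(\|f\chi_{k+\ell-1}\|_{L^q_\omega(\mathbb H^n)}+\|f\chi_{k+\ell}\|_{L^q_\omega(\mathbb H^n)}\bigr)dt,
\end{equation*}
where $\ell=\ell(t)$ denotes the integer with $2^{\ell-1}<t^{-1}\le 2^\ell$.

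Next I would insert this bound into the Morrey-Herz norm. Fixing $k_0\in\mathbb{Z}$, raising to the $p$-th power, multiplying by $2^{k\alpha p}$ and summing over $k\le k_0$, Minkowski's integral inequality pulls the $\ell^p$ norm inside the $t$-integral. After the change of index $k\mapsto k-\ell$ the inner sum becomes $2^{-\ell\alpha p}\sum_{k'\le k_0+\ell}2^{k'\alpha p}\|f\chi_{k'}\|_{L^q_\omega(\mathbb H^n)}^p$, which is controlled by $2^{-\ell\alpha p}\cdot 2^{(k_0+\ell)\lambda p}\|f\|_{MK^{\alpha,\lambda}_{p,q,\omega}(\mathbb H^n)}^p$. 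Multiplying by $2^{-k_0\lambda p}$ and taking $p$-th roots, the factor $2^{\ell(\lambda-\alpha)}\simeq t^{\alpha-\lambda}$ pops out and combines with $t^{-(1-(Q+\gamma)/q)}$ to produce exactly the integrand $\Phi(t)/t^{1-\alpha-(Q+\gamma)/q+\lambda}$ of $\mathcal{C}_7$. Taking the supremum over $k_0$ then yields $\|\mathcal{H}_{\Phi,\Omega}(f)\|_{MK^{\alpha,\lambda}_{p,q,\omega}(\mathbb H^n)}\lesssim \mathcal{C}_7\,\|\Omega\|_{L^{q'}(S_{Q-1})}\,\|f\|_{MK^{\alpha,\lambda}_{p,q,\omega}(\mathbb H^n)}$.

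For the converse, following the Morrey case of Theorem \ref{TheoremMorrey}, I would test on the radial extremizer
\begin{equation*}
f(x)=|x|_h^{-\alpha-(Q+\gamma)/q+\lambda}\,|\Omega(\delta_{|x|_h^{-1}}x)|^{q'-2}\,\overline{\Omega}(\delta_{|x|_h^{-1}}x),
\end{equation*}
whose homogeneity is tuned precisely so that a direct polar-coordinate computation gives $\mathcal{H}_{\Phi,\Omega}(f)(x)=\|\Omega\|_{L^{q'}(S_{Q-1})}^{q'}\,\mathcal{C}_7\,|x|_h^{-\alpha-(Q+\gamma)/q+\lambda}$. The same computation shows $\|f\chi_k\|_{L^q_\omega(\mathbb H^n)}\simeq \|\Omega\|_{L^{q'}(S_{Q-1})}^{q'/q}\,2^{k(\lambda-\alpha)}$, so $\sum_{k\le k_0}2^{k\alpha p}\|f\chi_k\|_{L^q_\omega(\mathbb H^n)}^p\simeq \|\Omega\|_{L^{q'}(S_{Q-1})}^{q'p/q}\sum_{k\le k_0}2^{k\lambda p}$. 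Since $\lambda>0$, this geometric series converges to a constant multiple of $2^{k_0\lambda p}$, giving $\|f\|_{MK^{\alpha,\lambda}_{p,q,\omega}(\mathbb H^n)}\simeq \|\Omega\|_{L^{q'}(S_{Q-1})}^{q'/q}$ and, specialising to $\Omega\equiv 1$, $\||x|_h^{-\alpha-(Q+\gamma)/q+\lambda}\|_{MK^{\alpha,\lambda}_{p,q,\omega}(\mathbb H^n)}\simeq 1$. The assumed boundedness of $\mathcal{H}_{\Phi,\Omega}$ then forces $\mathcal{C}_7<\infty$, and the combination of the two estimates produces the norm equivalence stated in the theorem.

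The main obstacle will be the shift-reindexing step in the second paragraph: the integer $\ell(t)$ depends on the continuous variable $t$, so the reduction must be performed uniformly in $k_0$ in order that the outcome depends only on $\|f\|_{MK^{\alpha,\lambda}_{p,q,\omega}(\mathbb H^n)}$ and never on the particular truncation index. The hypothesis $\lambda>0$ is the other delicate point: it is what makes the Morrey-Herz bound $2^{(k_0+\ell)\lambda p}$ on the shifted tail sum legitimate in the sufficient direction, and it is what makes the geometric series $\sum_{k\le k_0}2^{k\lambda p}$ convergent so that the extremizer actually lies in $MK^{\alpha,\lambda}_{p,q,\omega}(\mathbb H^n)$ in the converse direction.
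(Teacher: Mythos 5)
Your necessity argument coincides with the paper's: the same extremizer $f(x)=|x|_h^{-\alpha-\frac{Q+\gamma}{q}+\lambda}|\Omega(\delta_{|x|_h^{-1}}x)|^{q'-2}\overline{\Omega}(\delta_{|x|_h^{-1}}x)$, the same computation $\|f\chi_k\|_{L^q_\omega}\simeq 2^{k(\lambda-\alpha)}\|\Omega\|_{L^{q'}(S_{Q-1})}^{q'/q}$, and the same use of $\lambda>0$ to make the geometric series $\sum_{k\leq k_0}2^{k\lambda p}\simeq 2^{k_0\lambda p}$ converge so that $f$ genuinely lies in ${MK}^{\alpha,\lambda}_{p,q,\omega}(\mathbb H^n)$ with no $\varepsilon$-truncation needed (in contrast to Theorem \ref{TheoremHerz}). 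That half is sound for all $0<p<\infty$.

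The sufficiency direction, however, has a genuine gap: your plan to ``pull the $\ell^p$ norm inside the $t$-integral'' via Minkowski's integral inequality is only valid for $p\geq 1$, whereas the theorem is stated for $0<p<\infty$. For $0<p<1$ the quantity $\big(\sum_k|\cdot|^p\big)^{1/p}$ is merely a quasi-norm and the continuous Minkowski inequality fails; this is precisely why the paper's plain Herz result (Theorem \ref{TheoremHerz}), whose proof does use this exchange, is restricted to $1\leq p<\infty$, and why the $0<p<1$ Herz results (Theorems \ref{TheoremHerz2} and \ref{TheoremHerz3}) require the block decomposition of Proposition \ref{blockHerz} and logarithmically strengthened integral conditions with $\sigma>(1-p)/p$. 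The paper's proof of Theorem \ref{TheoremMorreyHerz} sidesteps the exchange entirely: it first exploits the Morrey-Herz structure termwise \emph{inside} the $t$-integral, namely the elementary bound (\ref{esLqMH}), $\|f\chi_k\|_{L^q_\omega}\leq 2^{k(\lambda-\alpha)}\|f\|_{{MK}^{\alpha,\lambda}_{p,q,\omega}}$, which for $t^{-1}\in(2^{\ell-1},2^{\ell}]$ gives $\|f\chi_{k+\ell-1}\|_{L^q_\omega}+\|f\chi_{k+\ell}\|_{L^q_\omega}\lesssim t^{\alpha-\lambda}2^{k(\lambda-\alpha)}\|f\|_{{MK}^{\alpha,\lambda}_{p,q,\omega}}$. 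Integrating this scalar inequality in $t$ yields the per-annulus estimate $\|\mathcal H_{\Phi,\Omega}(f)\chi_k\|_{L^q_\omega}\lesssim \mathcal C_7\|\Omega\|_{L^{q'}(S_{Q-1})}2^{k(\lambda-\alpha)}\|f\|_{{MK}^{\alpha,\lambda}_{p,q,\omega}}$, and only afterwards is the $\ell^p$ sum over $k\leq k_0$ taken --- now of the deterministic sequence $2^{k\lambda p}$, which converges for every $p>0$ since $\lambda>0$. Your reindexing route works verbatim for $p\geq 1$ and produces the same constant $\mathcal C_7$, but to cover the full stated range you must replace the Minkowski step by this termwise estimate; note the trade-off is that the per-$k$ bound discards the tail-sum information and works only because the Morrey-Herz norm, unlike the Herz norm, controls each single annulus with the factor $2^{k(\lambda-\alpha)}$.
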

\begin{proof}
We begin with proving the sufficient condition of this theorem. Recall the estimation (\ref{HfMHerzchik}) that
\begin{align}
\|{\mathcal H}_{\Phi,\Omega}(f)\chi_k\|_{L^q_\omega(\mathbb H^n)}\leq \|\Omega\|_{L^{q'}(S_{Q-1})}\int_{0}^{\infty}\dfrac{\Phi(t)}{t^{1-\frac{Q+\gamma}{q}}}\Big(\|f\chi_{k+\ell-1}\|_{L^q_\omega(\mathbb H^n)}+\|f\chi_{k+\ell}\|_{L^q_\omega(\mathbb H^n)}\Big)dt,\nonumber
\end{align}
where $\ell(t)\in\mathbb Z$ such that $2^{\ell(t)-1}< t^{-1}\leq 2^{\ell(t)}$. On the other hand, from the definition of the Morrey-Herz space, we get
\begin{align}\label{esLqMH}
\|f\chi_k\|_{L^q_\omega(\mathbb H^n)}\leq 2^{k(\lambda-\alpha)}\|f\|_{{MK}^{\alpha,\lambda}_{p, q, \omega}(\mathbb H^n)},\,\,\textit{\rm for all}\,k\in\mathbb Z.
\end{align}
By $t^{-1}\in (2^{\ell-1},2^{\ell}]$ and (\ref{esLqMH}), we imply
\begin{align}
\Big(\|f\chi_{k+\ell-1}\|_{L^q_\omega(\mathbb H^n)}+\|f\chi_{k+\ell}\|_{L^q_\omega(\mathbb H^n)}\Big)&\lesssim 2^{(k+\ell)(\lambda-\alpha)}\|f\|_{{MK}^{\alpha,\lambda}_{p, q, \omega}(\mathbb H^n)}\nonumber
\\
&\lesssim t^{\alpha-\lambda}2^{k(\lambda-\alpha)}\|f\|_{{MK}^{\alpha,\lambda}_{p, q, \omega}(\mathbb H^n)}.\nonumber
\end{align}
Thereforce, we infer
\begin{align}
\|{\mathcal H}_{\Phi,\Omega}(f)\chi_k\|_{L^q_\omega(\mathbb H^n)}\lesssim \|\Omega\|_{L^{q'}(S_{Q-1})}\Big(\int_{0}^{\infty}\dfrac{\Phi(t)}{t^{1-\frac{Q+\gamma}{q}-\alpha+\lambda}}dt\Big)2^{k(\lambda-\alpha)}\|f\|_{{MK}^{\alpha,\lambda}_{p, q, \omega}(\mathbb H^n)}.\nonumber
\end{align}
As a consequence, by $\lambda>0$, we deduce
\begin{align}
&\|{\mathcal H}_{\Phi,\Omega}(f)\|_{{MK}^{\alpha,\lambda}_{p, q, \omega}(\mathbb H^n)}=\mathop{\rm sup}\limits_{k_0\in\mathbb Z}2^{-k_0\lambda}\Big(\sum\limits_{k=-\infty}^{k_0}2^{k\alpha p}\|{\mathcal H}_{\Phi,\Omega}(f)\chi_k\|^p_{L^q_\omega(\mathbb H^n)}\Big)^{1/p}\nonumber
\\
&\,\,\,\lesssim \mathcal C_7.\|\Omega\|_{L^{q'}(S_{Q-1})}\Big\{\mathop{\rm sup}\limits_{k_0\in\mathbb Z}2^{-k_0\lambda}\Big(\sum\limits_{k=-\infty}^{k_0}2^{k\alpha p}2^{k(\lambda-\alpha)p}\Big)^{1/p}\Big\}\|f\|_{{MK}^{\alpha,\lambda}_{p, q, \omega}(\mathbb H^n)}\nonumber
\\
&\,\,\,\lesssim \mathcal C_7.\|\Omega\|_{L^{q'}(S_{Q-1})}.\|f\|_{{MK}^{\alpha,\lambda}_{p, q, \omega}(\mathbb H^n)},\nonumber
\end{align}
which shows that ${\mathcal H}_{\Phi,\Omega}$ is  a bounded operator from ${MK}^{\alpha,\lambda}_{p, q, \omega}(\mathbb H^n)$ to itself.
\\

To make the proof for the necessary condition, let us now take
$$
f(x)=|x|_h^{-\alpha-\frac{Q+\gamma}{q}+\lambda}|\Omega(\delta_{|x|_h^{-1}}.x)|^{q'-2}\overline{\Omega}(\delta_{|x|_h^{-1}}.x),\,\textit{\rm for all}\,\,x\neq 0.
$$
Then, we have
\begin{align}
\|f\chi_k\|_{L^q_\omega(\mathbb H^n)}&=\Big(\int_{C_k}|x|_h^{-\alpha q-(Q+\gamma)+\lambda q}|\Omega(\delta_{|x|_h^{-1}}x)|^{q'}\omega(x)dx\Big)^{1/q}\nonumber
\\
&=\Big(\int_{2^{k-1}}^{2^k}\int_{S_{Q-1}}|\delta_ry'|_h^{-\alpha q-(Q+\gamma)+\lambda q+\gamma}|\Omega(y')|^{q'}r^{Q-1}dy'dr\Big)^{1/q}\nonumber
\\
&=\Big(\int_{2^{k-1}}^{2^k}\int_{S_{Q-1}}r^{-(\alpha-\lambda)q-1}|\Omega(y')|^{q'}dy'dr\Big)^{1/q}.\nonumber
\end{align}
This deduces that
\begin{align}
\|f\chi_k\|_{L^q_\omega(\mathbb H^n)} &= \left\{ \begin{array}{l}
2^{-(\alpha-\lambda)k}\Big(\dfrac{2^{q(\alpha-\lambda)-1}}{q(\alpha-\lambda)}\Big)^{1/q}.\|\Omega\|_{L^q(S_{Q-1})}^{q'/q},\,\,\textit{\rm if}\,\,\alpha \neq\lambda,\nonumber
\\
\\
{\rm log}(2).\|\Omega\|_{L^q(S_{Q-1})}^{q'/q},\,\,\,\,\,\,\,\,\,\,\,\,\,\,\,\,\,\,\,\,\,\,\,\,\,\,\,\,\,\,\,\,\,\,\,\,\,\,\,\,\,\,\,\,\,\textit{\rm otherwise}.\nonumber\\
\end{array} \right.\\
&\simeq 2^{-(\alpha-\lambda)k}.\|\Omega\|_{L^q(S_{Q-1})}^{q'/q}.\nonumber
\end{align}
From the definition of the Morrey-Herz space, we have
\begin{align}\label{fMHerznorm}
\|f\|_{M^{\alpha,\lambda}_{p, q,\omega}(\mathbb H^n)}&\simeq\mathop{\rm sup}\limits_{k_0\in\mathbb Z}2^{-k_0\lambda}\Big(\sum\limits_{k=-\infty}^{k_0}2^{k\alpha p}\Big(2^{-(\alpha-\lambda)k}\|\Omega\|_{L^q(S_{Q-1})}^{q'/q}\Big)^p\Big)^{1/p}
\nonumber
\\
&=\|\Omega\|_{L^q(S_{Q-1})}^{q'/q}\mathop{\rm sup}\limits_{k_0\in\mathbb Z}2^{-k_0\lambda}\Big(\sum\limits_{k=-\infty}^{k_0}2^{k\lambda p}\Big)^{1/p}\simeq \|\Omega\|_{L^q(S_{Q-1})}^{q'/q}.
\end{align}
By choosing $f$ and having the relation $\delta_{|\delta_{t^{-1}|x|_h}y'|^{-1}_h}\delta_{t^{-1}|x|_h}y'=y'$, we obtain
\begin{align}
{\mathcal H}_{\Phi,\Omega}(f)(x)&=\int_{0}^{\infty}\int_{S_{Q-1}}\dfrac{\Phi(t)}{t}\Omega(y')
|\delta_{t^{-1}|x|_h}y'|^{-\alpha-\frac{Q+\gamma}{q}+\lambda}
|\Omega(y')|^{q'-2}\overline{\Omega}(y')dy'dt.\nonumber
\\
&=\|\Omega\|^{q'}_{L^{q'}(S_{Q-1})}.\Big(\int_{0}^{\infty}\dfrac{\Phi(t)}{t^{1-\alpha-\frac{Q+\gamma}{q}+\lambda}}dt\Big).|x|_h^{-\alpha-\frac{Q+\gamma}{q}+\lambda}\nonumber\\
&:=\mathcal C_7.\|\Omega\|^{q'}_{L^{q'}(S_{Q-1})}.g(x).\nonumber
\end{align}
Note that, by a similar argument as (\ref{fMHerznorm}), we immediately have 
$$\|g\|_{M^{\alpha,\lambda}_{p, q,\omega}(\mathbb H^n)}\simeq 1.
$$
From this, by (\ref{fMHerznorm}), we obtain
\begin{align}
\|\mathcal H_{\Phi,\Omega}(f)\|_{M^{\alpha,\lambda}_{p, q,\omega}(\mathbb H^n)}&=\mathcal C_7.\|\Omega\|^{q'}_{L^{q'}(S_{Q-1})}.\|g\|_{M^{\alpha,\lambda}_{p, q,\omega}(\mathbb H^n)}\nonumber
\\
&\mathcal C_7.\simeq \|\Omega\|^{q'}_{L^{q'}(S_{Q-1})}.\dfrac{\|f\|_{M^{\alpha,\lambda}_{p, q,\omega}(\mathbb H^n)}}{\|\Omega\|_{L^q(S_{Q-1})}^{q'/q}}\nonumber\\
&=\mathcal C_7. \|\Omega\|_{L^{q'}(S_{Q-1})}.\|f\|_{M^{\alpha,\lambda}_{p, q,\omega}(\mathbb H^n)}.\nonumber
\end{align}
Therefore, the proof of theorem is achieved.
\end{proof}
\section{The main results about the boundness of ${\mathcal{H}}^b_{\Phi,\Omega}$}\label{4}
Before giving our main results in this section, we need to prove the following useful lemmas.
\begin{lemma}\label{LemmaCMO1}
Let $1\leq q<\infty$, $1<q_1,r_1<\infty$, $-Q<\gamma<\frac{Q}{r'_1-1}$ and $\omega(x)=|x|_h^{\gamma}$. Assume that $\omega\in L^{q'}(S_{Q-1})$, $b\in CMO_\omega^{r_1}(\mathbb H^n)$ and the following condition is true:
\begin{align}
\dfrac{1}{q}=\dfrac{1}{q_1}+\dfrac{1}{r_1}.\nonumber
\end{align}
Then, for any $R>0$, we have
\begin{align}
\|{\mathcal{H}}^b_{\Phi,\Omega}(f)\|_{L^q_\omega(B_R)}\lesssim \|\Omega\|_{L^{q'}(S_{Q-1})}\|b\|_{CMO^{r_1}_\omega(\mathbb H^n)}R^{\frac{Q+\gamma}{r_1}}.\int_{0}^{\infty}\dfrac{\Phi(t)}{t^{1-\frac{Q+\gamma}{q_1}}}(2+\Psi(t))\|f\|_{L^{q_1}_\omega(B_{t^{-1}R})}dt,\nonumber
\end{align}
where $\Psi(t)=t^{-Q}\chi_{(0,1]}(t)+ t^{Q}\chi_{(1,\infty)}(t).$
\end{lemma}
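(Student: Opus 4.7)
The plan is to linearize the commutator by introducing the average $b_{B_R}:=\tfrac{1}{|B_R|}\int_{B_R}b$. Writing $b(x)-b(\delta_{t^{-1}|x|_h}y') = (b(x)-b_{B_R}) - (b(\delta_{t^{-1}|x|_h}y')-b_{B_R})$ inside the defining integral produces the pointwise identity
\[
\mathcal H^b_{\Phi,\Omega}(f)(x) = (b(x)-b_{B_R})\,\mathcal H_{\Phi,\Omega}(f)(x) + \mathcal H_{\Phi,\Omega}\bigl((b_{B_R}-b)f\bigr)(x),
\]
so $\|\mathcal H^b_{\Phi,\Omega}(f)\|_{L^q_\omega(B_R)}\le I_1+I_2$. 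The constant $2$ in the claimed bound will arise as the sum of the $I_1$-contribution and the ``triangle'' piece of $I_2$, whereas $\Psi(t)$ will come solely from the residual piece of $I_2$.

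For $I_1$, the hypothesis $\tfrac1q=\tfrac1{q_1}+\tfrac1{r_1}$ lets me apply H\"older on $B_R$ with the conjugate pair $(r_1/q,q_1/q)$ against the weight $\omega$, giving
$I_1\le \|b-b_{B_R}\|_{L^{r_1}_\omega(B_R)}\,\|\mathcal H_{\Phi,\Omega}(f)\|_{L^{q_1}_\omega(B_R)}$.
The first factor is controlled by $\|b\|_{CMO^{r_1}_\omega}\omega(B_R)^{1/r_1}\simeq \|b\|_{CMO^{r_1}_\omega}R^{(Q+\gamma)/r_1}$. For the second factor I reuse (\ref{HfLebMorrey}) with $q_1$ in place of $q$; since $q<q_1$ and the sphere has finite measure, $\|\Omega\|_{L^{q_1'}(S_{Q-1})}\lesssim \|\Omega\|_{L^{q'}(S_{Q-1})}$, which is the only cosmetic twist.

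For $I_2$, I apply (\ref{HfLebMorrey}) directly to $g=(b_{B_R}-b)f$ and then H\"older on each $B_{t^{-1}R}$ with the same conjugate pair, reducing matters to controlling $\|b_{B_R}-b\|_{L^{r_1}_\omega(B_{t^{-1}R})}$. Triangulating through $b_{B_{t^{-1}R}}$ splits this as $\|b-b_{B_{t^{-1}R}}\|_{L^{r_1}_\omega(B_{t^{-1}R})}\le \|b\|_{CMO^{r_1}_\omega}\omega(B_{t^{-1}R})^{1/r_1}$ plus a residual $|b_{B_R}-b_{B_{t^{-1}R}}|\,\omega(B_{t^{-1}R})^{1/r_1}$. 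Using $\omega(B_{t^{-1}R})^{1/r_1}\simeq (t^{-1}R)^{(Q+\gamma)/r_1}$ and the identity $\tfrac{Q+\gamma}{q}-\tfrac{Q+\gamma}{r_1}=\tfrac{Q+\gamma}{q_1}$ extracts the factor $R^{(Q+\gamma)/r_1}$ and shifts the $t$-exponent from $1-(Q+\gamma)/q$ to $1-(Q+\gamma)/q_1$, matching the target.

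The main obstacle is producing the $\Psi(t)$ weight from $|b_{B_R}-b_{B_{t^{-1}R}}|$. I split into the regimes $t\le 1$ (so $B_R\subset B_{t^{-1}R}$) and $t>1$ (so $B_{t^{-1}R}\subset B_R$): in each case, writing the average over the smaller ball as an average of $b-b_{\text{larger}}$ over the smaller ball shows that $|b_{B_R}-b_{B_{t^{-1}R}}|$ is dominated by the Lebesgue volume ratio ($t^{-Q}$ or $t^Q$ respectively) times the unweighted $L^1$-mean of $|b-b_{\text{larger}}|$ on the larger ball. The hypothesis $-Q<\gamma<Q/(r_1'-1)=Q(r_1-1)$ is precisely the condition for $\omega=|x|_h^\gamma\in A_{r_1}(\mathbb H^n)$, so Proposition \ref{pro2.4DFan} upgrades the unweighted mean to the weighted $L^{r_1}$-mean, which is controlled by $\|b\|_{CMO^{r_1}_\omega}$. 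This yields $|b_{B_R}-b_{B_{t^{-1}R}}|\lesssim \Psi(t)\|b\|_{CMO^{r_1}_\omega}$, and assembling all pieces gives the stated inequality.
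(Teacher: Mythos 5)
Your proof is correct, and it takes a genuinely different route from the paper's. The paper never linearizes the commutator: it applies Minkowski's inequality and a three-factor H\"older inequality (exponents $q'$, $r_1$, $q_1$, using $\tfrac1q=\tfrac1{q_1}+\tfrac1{r_1}$) so as to keep the full oscillation $b(x)-b(\delta_{t^{-1}|x|_h}y')$ in a single factor, and then proves the sphere-integrated estimate
\begin{equation*}
\Big(\int_{S_{Q-1}}\|b(\cdot)-b(\delta_{t^{-1}|\cdot|_h}y')\|^{r_1}_{L^{r_1}_{\omega}(B_R)}\,dy'\Big)^{\frac{1}{r_1}}\lesssim \big(2+\Psi(t)\big)R^{\frac{Q+\gamma}{r_1}}\|b\|_{CMO^{r_1}_\omega(\mathbb H^n)}
\end{equation*}
via the three-term split through $b_{B_R}$ and $b_{B_{t^{-1}R}}$, where the third term ($b_{B_{t^{-1}R}}-b(\delta_{t^{-1}|x|_h}y')$) is handled by a polar-coordinates change of variables converting it into an oscillation over $B_{t^{-1}R}$. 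You instead use the Coifman--Rochberg--Weiss splitting $\mathcal H^b_{\Phi,\Omega}f=(b-b_{B_R})\mathcal H_{\Phi,\Omega}f+\mathcal H_{\Phi,\Omega}((b_{B_R}-b)f)$ and reduce everything to two applications of the already-proven linear estimate \eqref{HfLebMorrey} plus weighted H\"older, which makes the change-of-variables step for $b$ composed with the dilation unnecessary; the two triangulation pieces of your $I_2$ play exactly the roles of the paper's $I_3$ and $I_2$, and your accounting of $2+\Psi(t)$ ($1$ from $I_1$, $1+\Psi(t)$ from $I_2$) matches the paper's ($I_1$ and $I_3$ give the $2$, the average-difference gives $\Psi$). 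Your treatment of $|b_{B_R}-b_{B_{t^{-1}R}}|$ via the observation that $-Q<\gamma<Q/(r_1'-1)=Q(r_1-1)$ means precisely $\omega\in A_{r_1}(\mathbb H^n)$, followed by Proposition \ref{pro2.4DFan}, is cleaner than this lemma's explicit computation of $\int_{B_{t^{-1}R}}\omega(x)^{1-r_1'}dx$ for the power weight (which amounts to verifying the $A_{r_1}$ condition by hand), and is in fact the route the paper itself takes at the corresponding step of Lemma \ref{LemmaCMO2}. Two small costs of your organization, neither a gap: you need $\|\Omega\|_{L^{q_1'}(S_{Q-1})}\lesssim\|\Omega\|_{L^{q'}(S_{Q-1})}$, which holds since $q<q_1$ forces $q_1'<q'$ and the sphere has finite measure; and the pointwise splitting identity presupposes absolute convergence, so one should formally run the argument with $|\Omega|$, $|f|$ and absolute values of the $b$-differences --- harmless, since every subsequent bound is on absolute values anyway, and the paper's Minkowski step implicitly makes the same reduction.
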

\begin{proof}
By making the Minkowski inequality and the H\"{o}lder inequality, we have
\begin{align}
&\|{\mathcal{H}}^b_{\Phi,\Omega}(f)\|_{L^q_\omega(B_R)}\nonumber
\\
&\leq \int_{0}^{\infty}\dfrac{\Phi(t)}{t}\int_{S_{Q-1}}|\Omega(y')|\Big(\int_{B_R}|b(x)-b(\delta_{{t^{-1}}|x|_h}y')|^q.|f(\delta_{t^{-1}|x|_h}y')|^q\omega dx\Big)^{1/q}dy'dt
\nonumber
\\
&\leq \int_{0}^{\infty}\dfrac{\Phi(t)}{t}\int_{S_{Q-1}}|\Omega(y')|. \|b(\cdot)-b(\delta_{t^{-1}|\cdot|_h}y')\|_{L^{r_1}_{\omega}(B_R)}.\|f(\delta_{t^{-1}|\cdot|_h}y')\|_{L^{q_1}_{\omega}(B_R)}dy'dt
\nonumber.
\end{align}
Thus, by using the H\"{o}lder inequality again, we infer
\begin{align}\label{HfMorCMO}
\|{\mathcal{H}}^b_{\Phi,\Omega}(f)\|_{L^q_\omega(B_R)}&\leq \|\Omega\|_{L^{q'}(S_{Q-1})}\int_{0}^{\infty}\dfrac{\Phi(t)}{t}\Big(\int_{S_{Q-1}}\|b(\cdot)-b(\delta_{t^{-1}|\cdot|_h}y')\|^{r_1}_{L^{r_1}_{\omega}(B_R)}dy'\Big)^{\frac{1}{r_1}}\times
\nonumber
\\
&\,\,\,\,\,\,\;\;\;\;\;\;\;\;\;\;\;\;\;\;\;\;\;\;\;\;\times \Big(\int_{S_{Q-1}}\|f(\delta_{t^{-1}|x|_h}y')\|^{q_1}_{L^{q_1}_{\omega}(B_R)}dy'\Big)^{\frac{1}{q_1}}dt.
\end{align}
Now, we need to show that
\begin{align}\label{esCMO}
\Big(\int_{S_{Q-1}}\|b(\cdot)-b(\delta_{t^{-1}|\cdot|_h}y')\|^{r_1}_{L^{r_1}_{\omega}(B_R)}dy'\Big)^{\frac{1}{r_1}}\lesssim \Big(2+\Psi(t)\Big)R^{\frac{Q+\gamma}{r_1}}\|b\|_{ {{CMO}}^{r_1}_{\omega}(\mathbb H^n)}.
\end{align}
Infact, it is not hard to see that
\begin{align}\label{I123}
&\Big(\int_{S_{Q-1}}\|b(\cdot)-b(\delta_{t^{-1}|\cdot|_h}y')\|^{r_1}_{L^{r_1}_{\omega}(B_R)}dy'\Big)^{\frac{1}{r_1}}\leq\Big(\int_{S_{Q-1}}\|b(\cdot)-b_{B_{R}}\|^{r_1}_{L^{r_1}_{\omega}(B_R)}dy'\Big)^{\frac{1}{r_1}}+\nonumber
\\
&+ \Big(\int_{S_{Q-1}}\|b_{B_R}-b_{B_{t^{-1}R}}\|^{r_1}_{L^{r_1}_{\omega}(B_R)}dy'\Big)^{\frac{1}{r_1}}+ \Big(\int_{S_{Q-1}}\|b_{B_{t^{-1}R}}-b({\delta_{t^{-1}|\cdot|_h}y'})\|^{r_1}_{L^{r_1}_{\omega}(B_R)}dy'\Big)^{\frac{1}{r_1}}\nonumber
\\
&:= I_1+I_2+I_3.
\end{align}
By the definition of the space ${{CMO}}_{\omega}^{r_1}(\mathbb H^n)$, we may estimate $I_1$ as follows
\begin{align}\label{esI1}
I_1&=\Big(\omega(B_R)\int_{S_{Q-1}}\frac{1}{\omega(B_R)}\|b(\cdot)-b_{B_{R}}\|^{r_1}_{L^{r_1}_{\omega}(B_R)}dy'\Big)^{\frac{1}{r_1}}\nonumber\\
&\lesssim R^{\frac{Q+\gamma}{r_1}}\|b\|_{{{CMO}}_{\omega}^{r_1}(\mathbb H^n)}.
\end{align}
Similarly, we have
\begin{align}\label{I2CMO}
I_2&\lesssim \omega(B_R)^{{\frac{1}{r_1}}}.|b_{B_R}-b_{B_{t^{-1}R}}|.
\end{align}
For $t\leq 1$, by the H\"{o}lder inequality, it follows that
\begin{align}
&|b_{B_R}-b_{t^{-1}R}|\leq \dfrac{1}{|B_R|}\int_{B_{t^{-1}R}}|b(x)-b_{B_{t^{-1}R}}|dx\nonumber
\\
&\leq \dfrac{\omega(B_{t^{-1}R})^{\frac{1}{r_1}}}{|B_R|}\Big(\dfrac{1}{\omega(B_{t^{-1}R})}\int_{B_{t^{-1}R}}|b(x)-b_{B_{t^{-1}R}}|^{r_1}\omega(x)dx\Big)^{\frac{1}{r_1}}\Big(\int_{B_{t^{-1}R}}\omega(x)^{1-r_1'}dx\Big)^{\frac{1}{r_1'}}
\nonumber
\\
&\leq \dfrac{\omega(B_{t^{-1}R})^{\frac{1}{r_1}}}{|B_R|}\Big(\int_{B_{t^{-1}R}}\omega(x)^{1-r_1'}dx\Big)^{\frac{1}{r_1'}}.\|b\|_{{{CMO}}_{\omega}^{r_1}(\mathbb H^n)}.\nonumber
\end{align}
Note that, by (\ref{omegaBr}) and $\gamma\in (-Q,\frac{Q}{r_1'-1}
)$, we get
$$
\dfrac{\omega(B_{t^{-1}R})^{\frac{1}{r_1}}}{|B_R|}\Big(\int_{B_{t^{-1}R}}\omega(x)^{1-r_1'}dx\Big)^{\frac{1}{r_1'}}\simeq\dfrac{(t^{-1}R)^{\frac{Q+\gamma}{r_1}}}{R^Q}(t^{-1}R)^{\frac{\gamma(1-r_1')+Q}{r_1'}}=t^{-Q}.
$$
This implies  $|b_{B_R}-b_{t^{-1}R}|\lesssim t^{-Q}\|b\|_{{{CMO}}_{\omega}^{r_1}(\mathbb H^n)}$. In the case $t>1$, by estimating as above, we also get $|b_{B_R}-b_{t^{-1}R}|\lesssim t^Q\|b\|_{{{CMO}}_{\omega}^{r_1}(\mathbb H^n)}$. Therefore, by (\ref{I2CMO}), we obtain that
\begin{align}\label{esI2}
I_2\lesssim \omega(B_R)^{\frac{1}{r_1}}\Psi(t).\|b\|_{{{CMO}}_{\omega}^{r_1}(\mathbb H^n)}\lesssim R^{\frac{Q+\gamma}{r_1}}\Psi(t).\|b\|_{{{CMO}}_{\omega}^{r_1}(\mathbb H^n)}.
\end{align}
Next, we have 
\begin{align}\label{I3CMO}
I_3^{r_1} \leq &\int_{S_{Q-1}}\int_{B_R}|b_{B_{t^{-1}R}}-b(\delta_{t^{-1}|x|_h}y')|^{r_1}\omega(x)dxdy'\nonumber
\\
&=\int_{S_{Q-1}}\int_{0}^R\int_{S_{Q-1}}|b_{B_{t^{-1}R}}-b(\delta_{t^{-1}r}y')|^{r_1}r^{Q+\gamma-1}du'drdy'
\nonumber
\\
&\lesssim t^{Q+\gamma}\int_{0}^{t^{-1}R}\int_{S_{Q-1}}|b_{B_{t^{-1}R}}-b(\delta_{z}y')|^{r_1}z^{Q+\gamma-1}dy'dz\nonumber
\\
&=t^{Q+\gamma}\int_{B_{t^{-1}R}}|b(x)-b_{B_{t^{-1}R}}|^{r_1}\omega(x)dx.
\end{align}
This deduces that
\begin{align}
I_3\lesssim t^{\frac{Q+\gamma}{r_1}}\omega(B_{t^{-1}R})^{\frac{1}{r_1}}\|b\|_{{{CMO}}_{\omega}^{r_1}(\mathbb H^n)}\lesssim R^{\frac{Q+\gamma}{r_1}}\|b\|_{{{CMO}}_{\omega}^{r_1}(\mathbb H^n)}.\nonumber
\end{align}
Consequently, by (\ref{esI1}) and (\ref{esI2}), the inequality (\ref{esCMO}) is true. On the other hand, by (\ref{S0BR}), we have
\begin{align}
 \Big(\int_{S_{Q-1}}\|f(\delta_{t^{-1}|x|_h}y')\|^{q_1}_{L^{q_1}_{\omega}(B_R)}dy'\Big)^{\frac{1}{q_1}}\lesssim t^{\frac{Q+\gamma}{q_1}}\|f\|_{L^{q_1}_\omega(B_{t^{-1}R})}.\nonumber
\end{align}
Therefore, by (\ref{HfMorCMO}) and (\ref{esCMO}), the proof of this lemma is completed.
\end{proof}
\begin{lemma}\label{LemmaCMO2}
Let $1\leq q, q^*_1, r^*_1<\infty$, $1\leq\zeta\leq r^*_1$, 
$\omega\in A_{\zeta}$ with the finite
critical index $r_\omega$ for the reverse H\"{o}lder condition. Assume that $\omega\in L^{q'}(S_{Q-1})$, $b\in CMO_\omega^{r^*_1}(\mathbb H^n)$ and the following condition is true:
\begin{align}\label{r*q*}
\dfrac{1}{q}>\Big(\dfrac{1}{q^*_1}+\dfrac{1}{r^*_1}\Big)\zeta\dfrac{r_\omega}{r_\omega-1}.
\end{align}
Then, for any $R>0$, we have
\begin{align}
\|{\mathcal{H}}^b_{\Phi,\Omega}(f)\|_{L^q_\omega(B_R)}&\lesssim \|\Omega\|_{L^{q'}(S_{Q-1})}\|b\|_{CMO^{r^*_1}_\omega(\mathbb H^n)}\times
\nonumber
\\
&\,\,\,\,\,\,\,\,\,\,\times\int_{0}^{\infty}\dfrac{\Phi(t)}{t}(2+\Psi(t))\dfrac{\omega(B_R)^{\frac{1}{q}}}{\omega(B_{t^{-1}R})^{\frac{1}{q^*_1}}}\|f\|_{L^{q_1^*}_\omega(B_{t^{-1}R})}dt.\nonumber
\end{align}
\end{lemma}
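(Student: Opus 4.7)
The plan is to combine the $CMO$-splitting approach of Lemma \ref{LemmaCMO1} with the Muckenhoupt/reverse-H\"older trick used in the proof of Theorem \ref{TheoremMorrey1}. Writing $g_{ty}(x):=g(\delta_{t^{-1}|x|_h}y')$ for brevity, Minkowski's inequality first reduces the problem to estimating the inner weighted $L^q$-norm of $(b-b_{ty})f_{ty}$ on $B_R$, uniformly in $y'$ and $t$. The key choice is to pick $r\in(1,r_\omega)$ so that $qr'=s$ with $\tfrac{1}{s}=\zeta\bigl(\tfrac{1}{q_1^*}+\tfrac{1}{r_1^*}\bigr)$; the hypothesis (\ref{r*q*}) is precisely what guarantees that this $r$ lies strictly below $r_\omega$. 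Applying H\"older with exponent $r$ and invoking the reverse H\"older condition on $\omega^r$ extracts the scalar $\omega(B_R)^{1/q}|B_R|^{-1/s}$ and leaves an unweighted $L^s$-integral of $(b-b_{ty})f_{ty}$ on $B_R$.

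Next, a second H\"older split on this unweighted integral with the conjugate pair $r_1^*/\zeta$ and $q_1^*/\zeta$ (whose reciprocals sum to $1/s$ by the choice of $r$) separates the commutator factor from $f_{ty}$. For the sphere integration, a generalized four-factor H\"older with exponents $q'$, $r_1^*/\zeta$, $q_1^*/\zeta$ and an auxiliary $p_0$ satisfying $\tfrac{1}{p_0}=\tfrac{1}{q}-\zeta\bigl(\tfrac{1}{q_1^*}+\tfrac{1}{r_1^*}\bigr)>0$ (again by the hypothesis) extracts $\|\Omega\|_{L^{q'}(S_{Q-1})}$ and reduces everything to two double integrals over $S_{Q-1}\times B_R$. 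The $f$-piece is controlled by the unweighted polar change of variables of (\ref{S0BR})---absent the weight, it produces a clean $t^Q$---followed by Proposition \ref{pro2.4DFan} with $p=\zeta$ to convert back to $\|f\|_{L^{q_1^*}_\omega(B_{t^{-1}R})}$, yielding the factor $|B_R|^{\zeta/q_1^*}\omega(B_{t^{-1}R})^{-1/q_1^*}$. For the commutator piece, the triangle decomposition $b(x)-b(\delta_{t^{-1}|x|_h}y')=(b(x)-b_{B_R})+(b_{B_R}-b_{B_{t^{-1}R}})+(b_{B_{t^{-1}R}}-b(\delta_{t^{-1}|x|_h}y'))$ gives three summands: the first by Proposition \ref{pro2.4DFan} and Jensen's inequality (legal since $1\leq\zeta\leq r_1^*$); the middle by the telescoping mean-value argument of Lemma \ref{LemmaCMO1} with Proposition \ref{pro2.4DFan} replacing the explicit power-weight computation, producing the $(2+\Psi(t))$ factor; and the third by the unweighted polar change of variables followed by Proposition \ref{pro2.4DFan} on $B_{t^{-1}R}$. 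All $|B_R|$-powers from the successive H\"older splits cancel exactly, leaving the desired ratio $\omega(B_R)^{1/q}/\omega(B_{t^{-1}R})^{1/q_1^*}$.

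The main obstacle is the third summand in the commutator decomposition. In the power-weight setting of Lemma \ref{LemmaCMO1}, the substitution $u=t^{-1}\rho$ yielded the clean scaling $\omega(tu)=t^\gamma\omega(u)$ and allowed direct reduction to a weighted $CMO_\omega^{r_1}$-integral on $B_{t^{-1}R}$, but for a general $A_\zeta$ weight no such scaling is available. The workaround is to perform the dilation at the \emph{unweighted} level, where only the Jacobian $t^Q$ appears, and to defer the passage back to the weighted $CMO^{r_1^*}_\omega$-norm to a single final application of Proposition \ref{pro2.4DFan} on $B_{t^{-1}R}$. One then verifies that the $\omega(B_{t^{-1}R})^{1/\zeta}$-factor produced by dominating the unweighted $L^{r_1^*/\zeta}$-average by the weighted $L^{r_1^*}$-average (via Jensen, using $\zeta\leq r_1^*$) cancels against the $\omega(B_{t^{-1}R})^{-1/\zeta}$ coming from Proposition \ref{pro2.4DFan}, so that no stray weight-ratio contaminates the $\|b\|_{CMO^{r_1^*}_\omega}$ coefficient in the final estimate.
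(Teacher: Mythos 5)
Your proof is correct, but it organizes the exponent bookkeeping differently from the paper. The paper never drops the weight all at once: using the strict inequality (\ref{r*q*}) it first \emph{constructs} intermediate exponents $q_1,r_1$ with $\tfrac{1}{q_1}+\tfrac{1}{r_1}=\tfrac{1}{q}$, $\tfrac{1}{q_1}>\tfrac{\zeta}{q_1^*}\tfrac{r_\omega}{r_\omega-1}$ and $\tfrac{1}{r_1}>\tfrac{\zeta}{r_1^*}\tfrac{r_\omega}{r_\omega-1}$, which lets it reuse verbatim the weighted splitting (\ref{HfMorCMO}) from Lemma \ref{LemmaCMO1} (a three-factor H\"older on the sphere with $q'$, $r_1$, $q_1$), and only then applies the H\"older--reverse-H\"older trick \emph{separately} inside the $I_3$-estimate (choosing $r\in(1,r_\omega)$ with $\tfrac{r_1^*}{\zeta}=r_1 r'$) and inside the $f$-estimate, reinstating the weight each time via Proposition \ref{pro2.4DFan}; the telescoping-free comparison of $b_{B_R}$ with $b_{B_{t^{-1}R}}$ through Proposition \ref{pro2.4DFan} produces $\Psi(t)$ exactly as you describe. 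You instead perform a \emph{single} global reverse-H\"older reduction at the level of the full product, choosing $r$ so that $s=qr'$ satisfies $\tfrac{1}{s}=\zeta\bigl(\tfrac{1}{q_1^*}+\tfrac{1}{r_1^*}\bigr)$ (and indeed (\ref{r*q*}) is equivalent to $1<r<r_\omega$ for this $r$), then split unweightedly into $L^{r_1^*/\zeta}\times L^{q_1^*/\zeta}$ and compensate on the sphere with the slack exponent $p_0$; your cancellation check of the $|B_R|$-powers ($-\tfrac1s+\tfrac{\zeta}{r_1^*}+\tfrac{\zeta}{q_1^*}=0$) is right, and your handling of the third $b$-summand --- unweighted dilation with Jacobian $t^Q$, then one application of Proposition \ref{pro2.4DFan} on $B_{t^{-1}R}$ --- matches the paper's mechanism for $I_3$ in substance. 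What each route buys: the paper's keeps every factor a genuine weighted norm throughout, so $I_1,I_2,I_3$ are direct analogues of Lemma \ref{LemmaCMO1} and no dummy H\"older factor is needed; yours avoids the existence argument for the auxiliary pair $(q_1,r_1)$ and replaces two reverse-H\"older applications by one, at the cost of carrying unweighted norms through the middle of the argument and of the extra factor $|S_{Q-1}|^{1/p_0}$, whose legitimacy rests on the strictness $\tfrac1q-\zeta\bigl(\tfrac{1}{q_1^*}+\tfrac{1}{r_1^*}\bigr)>0$, as you note. Two small points you left implicit but which do hold: $\zeta<q_1^*$ (needed for the exponent $q_1^*/\zeta\geq 1$) follows automatically from (\ref{r*q*}) since $q\geq 1$ and $r_\omega>1$; and no Jensen step is actually required in the first summand, since Proposition \ref{pro2.4DFan} with $p=\zeta$ applied to $|b-b_{B_R}|^{r_1^*/\zeta}$ already yields the weighted $CMO^{r_1^*}_{\omega}$ average exactly, with no stray weight ratio.
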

\begin{proof}
By the inequality (\ref{r*q*}), there exist two real numbers  $r_1, q_1$ such that
$$
\frac{1}{q_1}>\frac{\zeta}{q_1^*}\frac{r_\omega}{r_\omega-1},\frac{1}{r_1}>\frac{\zeta}{r_1^*}\frac{r_\omega}{r_\omega-1},$$
and $$\frac{1}{q_1}+ \frac{1}{r_1}=\frac{1}{q}.$$
Because of having  $\dfrac{1}{q_1}+ \dfrac{1}{r_1}=\dfrac{1}{q}$, we also obtain the inequality (\ref{HfMorCMO}).
\vskip 5pt
Now, we will refine the estimations of $I_1,I_2, I_3$ in Lemma \ref{LemmaCMO1}. From (\ref{esI1}) above and $r_1< r_1^*$, we infer
\begin{align}\label{esI1end}
I_1\leq \omega(B_R)^{\frac{1}{r_1}}\|b\|_{{CMO}_{\omega}^{r_1}(\mathbb H^n)}\leq \omega(B_R)^{\frac{1}{r_1}}\|b\|_{{CMO}_{\omega}^{r_1^*}(\mathbb H^n)}.
\end{align}
To compute $I_2$, we consider the case that $t\leq 1$. Since $\omega\in A_{\zeta}\subset A_{r_1^*}$ and Proposition \ref{pro2.4DFan}, one has
\begin{align}
|b_R-b_{B_{t^{-1}R}}|&\leq \dfrac{1}{|B_R|}\int_{B_{t^{-1}R}}|b(x)-b_{B_{t^{-1}R}}|dx\nonumber\\
&= \dfrac{|B_{t^{-1}R}|}{|B_R|}\Big(\dfrac{1}{|B_{t^{-1}R}|}\int_{B_{t^{-1}R}}|b(x)-b_{B_{t^{-1}R}}|dx\Big)\nonumber
\\
&\leq t^{-Q}\Big(\dfrac{1}{\omega(B_{t^{-1}R})}\int_{B_{t^{-1}R}}|b(x)-b_{B_{t^{-1}R}}|^{r_1^*}\omega(x)dx\Big)^{\frac{1}{r_1^*}}\nonumber
\\
&\leq  t^{-Q}\|b\|_{CMO^{r_1^*}_\omega(\mathbb H^n)}.\nonumber
\end{align}
Otherwise, for $t>1$ we get  $|b_R-b_{B_{t^{-1}R}}|\leq t^{Q}\|b\|_{CMO^{r_1^*}_\omega(\mathbb H^n)}.$ Thus, by (\ref{I2CMO}), we infer
\begin{align}\label{esI2end}
I_{2}\lesssim \omega(B_R)^{^{\frac{1}{r_1}}}\Psi(t).\|b\|_{{{CMO}}_{\omega}^{r_1^*}(\mathbb H^n)}.
\end{align}
Because of having $\frac{1}{r_1}>\frac{1}{r_1^*}\zeta\frac{r_\omega}{r_\omega-1}$, there exists $r\in (1, r_\omega)$ such that $\frac{r_1^*}{\zeta}=r_1.r'$. By using the H\"{o}lder inequality and the reverse H\"{o}lder condition again, we get
\begin{align}
I_{3}^{r_1}&\leq \int_{S_{Q-1}}\Big(\int_{B_R}{|b_{B_{t^{-1}R}}-b(\delta_{t^{-1}|x|_h}y')|^{\frac{r_1^*}{\zeta}}dx\Big)^{\frac{\zeta r_1}{r_1^*}}\Big(\int_{B_R}\omega(x)^{r}dx\Big)^{\frac{1}{r}}dy'}\nonumber
\\
&\lesssim |B_R|^{\frac{-1}{r'}}\omega(B_R)\Big(\int_{S_{Q-1}}\int_{B_R}{|b_{B_{t^{-1}R}}-b(\delta_{t^{-1}|x|_h}y')|^{\frac{r_1^*}{\zeta}}dxdy'}\Big)^{\frac{\zeta r_1}{r_1^*}}.\nonumber
\end{align}
By making as (\ref{I3CMO}) and Proposition \ref{pro2.4DFan}, we have
\begin{align}\label{esI3end}
I_3 &\lesssim  |B_R|^{\frac{-\zeta}{r^*_1}}\omega(B_R)^{\frac{1}{r_1}}\Big(\int_{S_{Q-1}}\int_{B_R}{|b_{B_{t^{-1}R}}-b(\delta_{t^{-1}|x|_h}y')|^{\frac{r_1^*}{\zeta}}dxdy'}\Big)^{\frac{\zeta}{r_1^*}}\nonumber
\\
&\lesssim |B_R|^{\frac{-\zeta}{r^*_1}}|B_{t^{-1}R}|^{\frac{\zeta}{r^*_1}}\omega(B_R)^{\frac{1}{r_1}}\Big(t^Q\dfrac{1}{|B_{t^{-1}R}|}\int_{B_{t^{-1}R}}|b(x)-b_{t^{-1}R}|^{\frac{r_1^*}{\zeta}}dx\Big)^{\frac{\zeta}{r_1^*}}\nonumber
\\
&\lesssim \omega(B_R)^{\frac{1}{r_1}}\Big(\dfrac{1}{\omega(B_{t^{-1}R})}\int_{B_{t^{-1}R}}|b(x)-b_{t^{-1}R}|^{r_1^*}\omega(x)dx\Big)^{\frac{1}{r_1^*}}\nonumber
\\
&\leq \omega(B_R)^{\frac{1}{r_1}}\|b\|_{CMO^{{r_1}^*}_\omega(\mathbb H^n)}.
\end{align}
Thus, by (\ref{I123}), (\ref{esI1end}), (\ref{esI2end}) and (\ref{esI3end}), we obtain  the important inequality as follows 
\begin{align}\label{I123end}
\Big(\int_{S_{Q-1}}\|b(\cdot)-b(\delta_{t^{-1}|\cdot|_h}y')\|^{r_1}_{L^{r_1}_{\omega}(B_R)}dy'\Big)^{\frac{1}{r_1}}\lesssim \omega(B_R)^{\frac{1}{r_1}}(2+\Psi(t)).\|b\|_{CMO^{{r_1}^*}_\omega(\mathbb H^n)}.
\end{align}
Because of making $\frac{1}{q_1}>\frac{1}{q_1^*}\zeta\dfrac{r_\omega}{r_\omega-1}$ and estimating (\ref{esI3end}) above, we imply
\begin{align}
\Big(\int_{S_{Q-1}}\|f(\delta_{t^{-1}|x|_h}y')\|^{q_1}_{L^{q_1}_{\omega}(B_R)}dy'\Big)^{\frac{1}{q_1}}\lesssim\omega(B_R)^{\frac{1}{q_1}}\omega(B_{t^{-1}R})^{\frac{-1}{q^*_1}}\|f\|_{L^{q^*_1}_\omega(B_{t^{-1}R})}.\nonumber
\end{align}
From this, by (\ref{HfMorCMO}) and (\ref{I123end}), the proof of this lemma is finished.
\end{proof}
Now, we are in a position to give the boundedness of the commutators of the rough Hausdorff operators on the weighted Morrey, Herz and Morrey-Herz spaces on the Heisenberg group. The first main result in this section is as follows.
\begin{theorem}\label{MorreyCMO1}
Let $-\frac{1}{q_1}<\lambda<0$. Suppose that the assumptions of Lemma \ref{LemmaCMO1} hold. Then,  if 
$$\mathcal C_8=\int_{0}^{\infty}\dfrac{\Phi(t)}{t^{1+(Q+\gamma)\lambda}}(2+\Psi(t))dt<\infty,
$$
we have ${\mathcal H}^b_{\Phi,\Omega}$ is bounded from ${\mathop B\limits^.}^{q_1,\lambda}_{\omega}(\mathbb H^n)$ to ${\mathop B\limits^.}^{q,\lambda}_{\omega}(\mathbb H^n)$.
\end{theorem}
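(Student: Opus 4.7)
The plan is to deduce the theorem directly from Lemma \ref{LemmaCMO1} together with the defining property of the weighted $\lambda$-central Morrey space, noting that the power weight $\omega(x) = |x|_h^{\gamma}$ gives the clean scaling $\omega(B_R) \simeq R^{Q+\gamma}$ already recorded in \eqref{omegaBr}. First I would fix $R > 0$ and apply Lemma \ref{LemmaCMO1} to obtain
\[
\|{\mathcal H}^b_{\Phi,\Omega}(f)\|_{L^q_\omega(B_R)} \lesssim \|\Omega\|_{L^{q'}(S_{Q-1})}\|b\|_{CMO^{r_1}_\omega(\mathbb H^n)} R^{(Q+\gamma)/r_1}\int_{0}^{\infty}\frac{\Phi(t)}{t^{1-(Q+\gamma)/q_1}}(2+\Psi(t))\|f\|_{L^{q_1}_\omega(B_{t^{-1}R})}dt.
\]
Then I would control the inner norm by the source Morrey norm: from the definition,
\[
\|f\|_{L^{q_1}_\omega(B_{t^{-1}R})} \leq \omega(B_{t^{-1}R})^{1/q_1+\lambda}\|f\|_{{\mathop B\limits^.}^{q_1,\lambda}_{\omega}(\mathbb H^n)} \simeq (t^{-1}R)^{(Q+\gamma)(1/q_1+\lambda)}\|f\|_{{\mathop B\limits^.}^{q_1,\lambda}_{\omega}(\mathbb H^n)}.
\]

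Next I would divide both sides of the resulting inequality by $\omega(B_R)^{1/q+\lambda} \simeq R^{(Q+\gamma)(1/q+\lambda)}$ and track the powers of $R$ and $t$ carefully. Using the relation $1/q = 1/q_1 + 1/r_1$ from Lemma \ref{LemmaCMO1}, the $R$-exponent is
\[
\tfrac{Q+\gamma}{r_1} - (Q+\gamma)(\tfrac{1}{q}+\lambda) + (Q+\gamma)(\tfrac{1}{q_1}+\lambda) = 0,
\]
so the $R$-dependence cancels entirely. Collecting the $t$-exponents from $t^{-(1-(Q+\gamma)/q_1)}$ and $t^{-(Q+\gamma)(1/q_1+\lambda)}$ yields precisely $t^{-(1+(Q+\gamma)\lambda)}$, which is the exponent appearing in the definition of $\mathcal C_8$. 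Thus
\[
\frac{1}{\omega(B_R)^{1/q+\lambda}}\|{\mathcal H}^b_{\Phi,\Omega}(f)\|_{L^q_\omega(B_R)} \lesssim \mathcal C_8\, \|\Omega\|_{L^{q'}(S_{Q-1})}\|b\|_{CMO^{r_1}_\omega(\mathbb H^n)}\|f\|_{{\mathop B\limits^.}^{q_1,\lambda}_{\omega}(\mathbb H^n)},
\]
and taking the supremum over $R > 0$ completes the proof.

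There is essentially no hard step; the entire argument is an application of the pre-existing lemma followed by dimensional bookkeeping. The only point that requires care is verifying the cancellation of $R$-exponents via $1/q = 1/q_1 + 1/r_1$, and ensuring the hypothesis $\lambda \in (-1/q_1, 0)$ indeed forces $\lambda \in (-1/q, 0)$ so that the target Morrey norm is well-defined (which follows because $q < q_1$ and hence $-1/q < -1/q_1 < \lambda < 0$). The factor $(2 + \Psi(t))$ is simply carried through the computation and is precisely what makes $\mathcal C_8$ finite at both $t \to 0$ and $t \to \infty$ in view of the singular behaviour of $\Psi$.
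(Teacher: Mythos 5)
Your proposal is correct and follows essentially the same route as the paper: apply Lemma \ref{LemmaCMO1}, bound $\|f\|_{L^{q_1}_\omega(B_{t^{-1}R})}$ by $\omega(B_{t^{-1}R})^{1/q_1+\lambda}\|f\|_{{\mathop B\limits^.}^{q_1,\lambda}_{\omega}(\mathbb H^n)}$, and use $\omega(B_R)\simeq R^{Q+\gamma}$ together with $1/q=1/q_1+1/r_1$ to cancel the $R$-powers and produce the exponent $t^{-(1+(Q+\gamma)\lambda)}$ of $\mathcal C_8$ --- exactly the computation in the paper's proof. Your extra observation that $\lambda\in(-1/q_1,0)$ forces $\lambda\in(-1/q,0)$ (since $q<q_1$) is a small, correct addition the paper leaves implicit.
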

\begin{proof}
For any $R>0$, by Lemma \ref{LemmaCMO1}, we have
\begin{align}
\dfrac{1}{\omega(B_R)^{\frac{1}{q}+\lambda}}\|{\mathcal{H}}^b_{\Phi,\Omega}(f)\|_{L^q_\omega(B_R)} &\lesssim \|\Omega\|_{L^{q'}(S_{Q-1})}\|b\|_{CMO^{r_1}_\omega(\mathbb H^n)}\Big(\int_{0}^{\infty}\dfrac{\Phi(t)}{t^{1-\frac{Q+\gamma}{q_1}}}(2+\Psi(t))\times\nonumber
\\
&\,\,\,\,\,\,\times \dfrac{R^{\frac{Q+\gamma}{r_1}}\omega(B_{t^{-1}R})^{\frac{1}{q_1}+\lambda}}{\omega(B_{R})^{\frac{1}{q}+\lambda}}\dfrac{1}{\omega(B_{t^{-1}R})^{\frac{1}{q_1}+\lambda}}\|f\|_{L^{q_1}_\omega(B_{t^{-1}R})}dt,\nonumber
\end{align}
By a simple calculation, one has
$$
\dfrac{R^{\frac{Q+\gamma}{r_1}}\omega(B_{t^{-1}R})^{\frac{1}{q_1}+\lambda}}{\omega(B_{R})^{\frac{1}{q}+\lambda}}\simeq \dfrac{R^{\frac{Q+\gamma}{r_1}}(t^{-1}R)^{(Q+\gamma)(\frac{1}{q_1}+\lambda)}}{R^{(Q+\gamma)(\frac{1}{q}+\lambda)}}=t^{-(Q+\gamma)(\frac{1}{q_1}+\lambda)}.
$$
Thus, we obtain 
$$
\|{\mathcal H}^b_{\Phi,\Omega}(f)\|_{{\mathop B\limits^.}^{q,\lambda}_{\omega}(\mathbb H^n)}\lesssim \mathcal C_8.\|\Omega\|_{L^{q'}(S_{Q-1})}\|b\|_{CMO^{r_1}_\omega(\mathbb H^n)}\|f\|_{{\mathop B\limits^.}^{q_1,\lambda}_{\omega}(\mathbb H^n)},
$$
which finishes the proof of theorem.
\end{proof}
\begin{theorem}\label{MorreyCMO2}
Let the assumptions of Lemma \ref{LemmaCMO2} hold and $\lambda\in(-\frac{1}{q_1^*},0)$. If 
$$\mathcal C_9=\int_{0}^{1}\dfrac{\Phi(t)}{t^{1+Q\lambda(\delta-1)/\delta}}(2+\Psi(t))dt+\int_{1}^{\infty}\dfrac{\Phi(t)}{t^{1+Q\zeta\lambda}}(2+\Psi(t))dt<\infty,
$$
then ${\mathcal H}^b_{\Phi,\Omega}$ is bounded from ${\mathop B\limits^.}^{q_1^*,\lambda}_{\omega}(\mathbb H^n)$ to ${\mathop B\limits^.}^{q,\lambda}_{\omega}(\mathbb H^n)$.
\end{theorem}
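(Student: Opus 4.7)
The plan is to combine Lemma \ref{LemmaCMO2} (which plays the role Lemma \ref{LemmaCMO1} played in Theorem \ref{MorreyCMO1}) with the same weight-ratio estimate (\ref{t-1Rlambda}) already established in the proof of Theorem \ref{TheoremMorrey1}. First, for a fixed $R>0$, I invoke Lemma \ref{LemmaCMO2} to bound $\|\mathcal{H}^b_{\Phi,\Omega}(f)\|_{L^q_\omega(B_R)}$ by
$$
\|\Omega\|_{L^{q'}(S_{Q-1})}\|b\|_{CMO^{r_1^*}_\omega(\mathbb H^n)}\int_0^\infty\frac{\Phi(t)}{t}(2+\Psi(t))\,\frac{\omega(B_R)^{1/q}}{\omega(B_{t^{-1}R})^{1/q_1^*}}\|f\|_{L^{q_1^*}_\omega(B_{t^{-1}R})}\,dt.
$$

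Next I divide both sides by $\omega(B_R)^{1/q+\lambda}$ and insert the obvious bound $\|f\|_{L^{q_1^*}_\omega(B_{t^{-1}R})}\leq \omega(B_{t^{-1}R})^{1/q_1^*+\lambda}\|f\|_{\dot{B}^{q_1^*,\lambda}_\omega(\mathbb H^n)}$ coming from the definition of the Morrey norm (valid because $\lambda\in(-1/q_1^*,0)$). After a one-line cancellation the ratio collapses to $(\omega(B_{t^{-1}R})/\omega(B_R))^{\lambda}$ inside the integral, exactly as in the proof of Theorem \ref{TheoremMorrey1}. Since $\omega\in A_\zeta$ with reverse-Hölder exponent $r_\omega$ and $\delta\in(1,r_\omega)$, the two-sided estimate (\ref{t-1Rlambda}) applies verbatim and gives the dichotomy $t^{-Q\zeta\lambda}$ for $t\geq1$ and $t^{-Q\lambda(\delta-1)/\delta}$ for $0<t<1$.

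Substituting these bounds splits the $t$-integral into the two pieces appearing in the definition of $\mathcal{C}_9$, so taking the supremum in $R>0$ produces
$$
\|\mathcal{H}^b_{\Phi,\Omega}(f)\|_{\dot{B}^{q,\lambda}_\omega(\mathbb H^n)}\lesssim \mathcal{C}_9\,\|\Omega\|_{L^{q'}(S_{Q-1})}\|b\|_{CMO^{r_1^*}_\omega(\mathbb H^n)}\|f\|_{\dot{B}^{q_1^*,\lambda}_\omega(\mathbb H^n)},
$$
which is the claimed boundedness. No new idea is required; the proof is a direct recycling of the commutator estimate of Lemma \ref{LemmaCMO2} together with the weight-dilation control (\ref{t-1Rlambda}).

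The only part that requires care is the bookkeeping: one must check that $q'$ in Lemma \ref{LemmaCMO2} is compatible with the hypothesis $\Omega\in L^{q'}(S_{Q-1})$ of the current theorem (this is built into the "assumptions of Lemma \ref{LemmaCMO2} hold" clause), and that the factor $(2+\Psi(t))$ is harmlessly absorbed into $\mathcal{C}_9$, which is why $\mathcal{C}_9$ carries this weight relative to $\mathcal{C}_2$ of Theorem \ref{TheoremMorrey1}. I expect no substantive obstacle, since every analytic ingredient—the commutator pointwise estimate, the Minkowski/Hölder manipulations, and the $A_\zeta$/reverse-Hölder weight dichotomy—has already been carried out earlier in the paper; the present theorem is essentially their superposition.
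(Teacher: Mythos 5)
Your proposal is correct and follows essentially the same route as the paper's own proof: invoke Lemma \ref{LemmaCMO2}, divide by $\omega(B_R)^{1/q+\lambda}$, dominate $\|f\|_{L^{q_1^*}_\omega(B_{t^{-1}R})}$ by $\omega(B_{t^{-1}R})^{1/q_1^*+\lambda}\|f\|_{{\mathop B\limits^.}^{q_1^*,\lambda}_{\omega}(\mathbb H^n)}$ so that only the ratio $\bigl(\omega(B_{t^{-1}R})/\omega(B_R)\bigr)^{\lambda}$ survives, and control it via the dichotomy (\ref{t-1Rlambda}), which yields exactly the two integrals in $\mathcal C_9$. There is no substantive difference from the paper's argument.
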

\begin{proof}
For $R>0$, by making Lemma \ref{LemmaCMO2}, we infer
\begin{align}
\dfrac{1}{\omega(B_R)^{\frac{1}{q}+\lambda}}\|{\mathcal{H}}^b_{\Phi,\Omega}(f)\|_{L^q_\omega(B_R)}&\lesssim \|\Omega\|_{L^{q'}(S_{Q-1})}\|b\|_{CMO^{r^*_1}_\omega(\mathbb H^n)}\Big(\int_{0}^{\infty}\dfrac{\Phi(t)}{t}(2+\Psi(t))\times\nonumber
\\
&\,\,\,\,\,\,\times\Big(\dfrac{\omega(B_{t^{-1}R})}{\omega(B_R)}\Big)^{\lambda}.\dfrac{1}{\omega(B_{t^{-1}R})^{\frac{1}{q^*_1}+\lambda}}\|f\|_{L^{q_1^*}_\omega(B_{t^{-1}R})}dt.\nonumber
\end{align}
Hence, by the inequality (\ref{t-1Rlambda}) and the definition of the Morrey space, we estimate
$$
\|{\mathcal H}^b_{\Phi,\Omega}(f)\|_{{\mathop B\limits^.}^{q,\lambda}_{\omega}(\mathbb H^n)}\lesssim \mathcal C_9.\|\Omega\|_{L^{q'}(S_{Q-1})}\|b\|_{CMO^{r_1^*}_\omega(\mathbb H^n)}\|f\|_{{\mathop B\limits^.}^{q_1^*,\lambda}_{\omega}(\mathbb H^n)}.
$$
This implies that the proof of theorem is ended.
\end{proof}
\begin{theorem}\label{HerzCMO2}
Let $1\leq p, q, q^*_1, r^*_1<\infty$, $1\leq\zeta\leq r^*_1$, $\alpha\in\mathbb R$, $\alpha^*_1<0$ and 
$\omega\in A_{\zeta}$ with the finite critical index $r_\omega$ for the reverse H\"{o}lder condition and $\delta\in (1,r_\omega)$. Assume that $\omega\in L^{q'}(S_{Q-1})$, $b\in CMO_\omega^{r^*_1}(\mathbb H^n)$, the hypothesis (\ref{alpha*alpha}) in Theorem \ref{TheoremHerz1} and the hypothesis (\ref{r*q*}) in Lemma \ref{LemmaCMO2} hold.
\\
$(\rm i)$ If $\dfrac{1}{q^*_1}+\dfrac{\alpha^*_1}{Q}\geq 0$ and 
$$\mathcal C_{10.1}= \int_{0}^{1/2}\dfrac{\Phi(t)}{t^{1-Q\frac{(\delta-1)}{\delta}(\frac{1}{q^*_1}+\frac{\alpha^*_1}{Q})}}(2+\Psi(t))dt +\int_{1/2}^{\infty}\dfrac{\Phi(t)}{t^{1-Q\zeta(\frac{1}{q^*_1}+\frac{\alpha^*_1}{Q})}}(2+\Psi(t))dt<\infty,
$$
then 
$
\|{\mathcal H}^b_{\Phi,\Omega}(f)\|_{{\mathop{K}\limits^.}^{\alpha, p}_{q,\omega}(\mathbb H^n)}\lesssim \mathcal C_{10.1}.\|\Omega\|_{L^{q'}(S_{Q-1})}\|b\|_{CMO^{r_1^*}_\omega(\mathbb H^n)}\|f\|_{{\mathop{K}\limits^.}^{\alpha^*_1, p}_{q_1^*,\omega}(\mathbb H^n)}.
$
\\
$\,(\rm ii)$ If $\dfrac{1}{q^*_1}+\dfrac{\alpha^*_1}{Q}<0$ and 
$$\mathcal C_{10.2}= \int_{0}^{1/2}\dfrac{\Phi(t)}{t^{1-Q\zeta(\frac{1}{q^*_1}+\frac{\alpha^*_1}{Q})}}(2+\Psi(t))dt+\int_{1/2}^{\infty}\dfrac{\Phi(t)}{t^{1-Q\frac{(\delta-1)}{\delta}(\frac{1}{q^*_1}+\frac{\alpha^*_1}{Q})}}(2+\Psi(t))dt<\infty,
$$
then 
$
\|{\mathcal H}^b_{\Phi,\Omega}(f)\|_{{\mathop{K}\limits^.}^{\alpha, p}_{q,\omega}(\mathbb H^n)}\lesssim \mathcal C_{10.2}.\|\Omega\|_{L^{q'}(S_{Q-1})}\|b\|_{CMO^{r_1^*}_\omega(\mathbb H^n)}\|f\|_{{\mathop{K}\limits^.}^{\alpha^*_1, p}_{q_1^*,\omega}(\mathbb H^n)}.
$
\end{theorem}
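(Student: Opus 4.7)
The plan is to run the proof of Theorem \ref{TheoremHerz1} verbatim, only replacing the starting pointwise-on-annuli estimate by the one furnished in Lemma \ref{LemmaCMO2}; the extra $(2+\Psi(t))$ factor will simply be absorbed into the integral conditions $\mathcal C_{10.1}$ and $\mathcal C_{10.2}$, which is why those conditions differ from $\mathcal C_{4.1}$ and $\mathcal C_{4.2}$ only by that factor.

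First I would apply Lemma \ref{LemmaCMO2} with $B_R$ replaced by the ball $U_k$, observing that the restriction of ${\mathcal H}^b_{\Phi,\Omega}(f)$ to the annulus $C_k=U_k\setminus U_{k-1}$ is bounded by its restriction to $U_k$, giving
$$
\|{\mathcal H}^b_{\Phi,\Omega}(f)\chi_k\|_{L^{q}_\omega(\mathbb H^n)}\lesssim \|\Omega\|_{L^{q'}(S_{Q-1})}\|b\|_{CMO^{r^*_1}_\omega(\mathbb H^n)}\int_{0}^{\infty}\dfrac{\Phi(t)}{t}(2+\Psi(t))\dfrac{\omega(U_k)^{\frac{1}{q}}}{\omega(\delta_{t^{-1}}U_k)^{\frac{1}{q^*_1}}}\|f\|_{L^{q^*_1}_\omega(\delta_{t^{-1}}U_k)}dt.
$$
Then I would substitute into the definition of $\|\cdot\|_{{\mathop{K}\limits^{.}}^{\alpha,p}_{q,\omega}(\mathbb H^n)}$ and apply the generalized Minkowski inequality to move the $\ell^p$ norm in $k$ inside the $t$-integral, exactly as in the derivation leading to \eqref{HfHerzAp1}. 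Using the hypothesis \eqref{alpha*alpha}, the factor $\omega(U_k)^{\alpha p/Q}$ combines with $\omega(U_k)^{1/q}$ to yield $\omega(U_k)^{(1/q_1^*+\alpha_1^*/Q)}$.

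Next, I would split the $t$-integral dyadically by $t^{-1}\in (2^j,2^{j+1}]$, note that $U_{k+j}\subset \delta_{t^{-1}}U_k\subset U_{k+j+1}$, and decompose
$$
\|f\|_{L^{q^*_1}_\omega(U_{k+j+1})}\leq \sum_{\ell=-\infty}^{j+1}\|f\chi_{k+\ell}\|_{L^{q^*_1}_\omega(\mathbb H^n)},
$$
inserting $\omega(U_{k+\ell})^{\alpha^*_1/Q}\omega(U_{k+\ell})^{-\alpha^*_1/Q}$ as in \eqref{HfHerzAp1}. The ratios $(\omega(U_{k+j})/\omega(U_{k+\ell}))^{\alpha^*_1/Q}$ and $(\omega(U_k)/\omega(U_{k+j}))^{1/q^*_1+\alpha^*_1/Q}$ are then controlled by Proposition \ref{pro2.3DFan} precisely as in \eqref{omegaHez11}--\eqref{omegaHez13}, leading to the quantities $\mathcal A_{j,\ell}$ summable in $\ell$ (uniformly in $j$) by the argument following \eqref{omegaHez13}. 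The splitting in cases (i) and (ii) mirrors Cases 1 and 2 of Theorem \ref{TheoremHerz1}: the sign of $1/q^*_1+\alpha^*_1/Q$ decides whether the $\zeta$-exponent is used when $j\leq 0$ (respectively $j>0$) or when $j>0$ (respectively $j\leq 0$), producing the piecewise exponents that appear inside $\mathcal C_{10.1}$ and $\mathcal C_{10.2}$.

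Finally, the inner $\ell^p$-sum in $k$ gives, via Minkowski, a factor $\|f\|_{{\mathop{K}\limits^{.}}^{\alpha^*_1, p}_{q_1^*,\omega}(\mathbb H^n)}$, while the summation in $j$ reassembles the single integrals $\int_{0}^{1/2}$ and $\int_{1/2}^{\infty}$ that define $\mathcal C_{10.1}$ (resp.\ $\mathcal C_{10.2}$), now carrying the extra weight $(2+\Psi(t))$ inherited from Lemma \ref{LemmaCMO2}. The only real obstacle is bookkeeping: ensuring that the dyadic ranges on $t$ line up consistently with the case split on the sign of $1/q^*_1+\alpha^*_1/Q$ so that the exponents in the two integrals of $\mathcal C_{10.1}$ and $\mathcal C_{10.2}$ match exactly what Proposition \ref{pro2.3DFan} delivers. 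Apart from this careful matching, no new analytic ingredient beyond what is already developed in Theorem \ref{TheoremHerz1} and Lemma \ref{LemmaCMO2} is needed.
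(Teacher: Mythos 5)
Your proposal is correct and coincides with the paper's own proof, which consists precisely of invoking Lemma \ref{LemmaCMO2} on the balls $U_k$ to get the annulus estimate with the extra factor $(2+\Psi(t))$ and then repeating the dyadic-splitting argument of Theorem \ref{TheoremHerz1} verbatim. In fact you supply more detail than the paper does (the paper compresses everything after the first display into ``estimating as in the proof of Theorem \ref{TheoremHerz1}''), and your bookkeeping of the case split on the sign of $\frac{1}{q_1^*}+\frac{\alpha_1^*}{Q}$ matches \eqref{omegaHez12}--\eqref{omegaHez13} exactly.
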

\begin{proof}
From Lemma \ref{LemmaCMO2}, for any $k\in\mathbb Z$, we get
\begin{align}
\|{\mathcal H}_{\Phi,\Omega}(f)\chi_k\|_{L^{q}_\omega(\mathbb H^n)}&\lesssim \|\Omega\|_{L^{q'}(S_{Q-1})}\|b\|_{CMO^{r_1^*}_\omega(\mathbb H^n)}\int_{0}^{\infty}\dfrac{\Phi(t)}{t}(2+\Psi(t))\times\nonumber
\\
&\,\,\,\,\,\,\,\times\dfrac{\omega(B_k)^{\frac{1}{q}}}{\omega(\delta_{t^{-1}}B_k)^{\frac{1}{q^*_1}}}\|f\|_{L^{q^*_1}_\omega(\delta_{t^{-1}}B_k)}dt,\nonumber
\end{align}
where $\delta_{t^{-1}}B_k=\{z\in\mathbb H^n: |z|_h\leq \frac{2^{k}}{t}\}.$
Note that, by  estimating as the next step proof of Theorem \ref{TheoremHerz1}, we will have the desired results.
\end{proof}
Finally, we have the boundedness of ${\mathcal H}^b_{\Phi,\Omega}$ on the weighted Morrey-Herz spaces on the Heisenberg group as follows.
\begin{theorem}\label{MHerzCMO1}
Let the assumptions of Lemma \ref{LemmaCMO1} hold, $0<p<\infty$, $\lambda >0$ and $\alpha_1=\alpha+\frac{Q+\gamma}{r_1}$. If 
$$\mathcal C_{11}=\int_{0}^{\infty}\dfrac{\Phi(t)}{t^{1-\frac{Q+\gamma}{q_1}-\alpha_1+\lambda}}(2+\Psi(t))dt<\infty,
$$
then ${\mathcal H}^b_{\Phi,\Omega}$ is  a bounded operator from ${MK}^{\alpha_1,\lambda}_{p, q_1, \omega}(\mathbb H^n)$ to ${MK}^{\alpha,\lambda}_{p, q, \omega}(\mathbb H^n)$.
\end{theorem}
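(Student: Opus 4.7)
The plan is to mirror the strategy of Theorem~\ref{TheoremMorreyHerz} (the non-commutator Morrey--Herz case), now incorporating the CMO machinery of Lemma~\ref{LemmaCMO1}. The key technical move is to promote the ball-level estimate of Lemma~\ref{LemmaCMO1} to an annulus-level estimate for $\|{\mathcal H}^b_{\Phi,\Omega}(f)\chi_k\|_{L^q_\omega(\mathbb H^n)}$, so that the Morrey--Herz norm can then be assembled layer by layer.

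First I would derive the annular analogue of Lemma~\ref{LemmaCMO1}: for each $k\in\mathbb Z$, with $\ell=\ell(t)\in\mathbb Z$ chosen so that $2^{\ell-1}<t^{-1}\leq 2^\ell$,
$$
\|{\mathcal H}^b_{\Phi,\Omega}(f)\chi_k\|_{L^q_\omega(\mathbb H^n)} \lesssim \|\Omega\|_{L^{q'}(S_{Q-1})}\|b\|_{CMO^{r_1}_\omega(\mathbb H^n)}\, 2^{k(Q+\gamma)/r_1}\int_0^\infty \frac{\Phi(t)(2+\Psi(t))}{t^{1-(Q+\gamma)/q_1}}\bigl(\|f\chi_{k+\ell-1}\|_{L^{q_1}_\omega}+\|f\chi_{k+\ell}\|_{L^{q_1}_\omega}\bigr)\,dt.
$$
This is obtained by repeating the Minkowski--H\"older computation of Lemma~\ref{LemmaCMO1} after replacing $B_R$ by $C_k$: the polar-coordinate identity~(\ref{S0BR}), applied to the $f$-factor, produces a sum of two adjacent annular $L^{q_1}_\omega$-norms because $\delta_{t^{-1}}C_k\subset C_{k+\ell-1}\cup C_{k+\ell}$. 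For the $b$-factor, the three-piece decomposition
$$
b(x)-b(\delta_{t^{-1}|x|_h}y')=[b(x)-b_{B_{2^k}}]+[b_{B_{2^k}}-b_{B_{2^{k+\ell}}}]+[b_{B_{2^{k+\ell}}}-b(\delta_{t^{-1}|x|_h}y')]
$$
is estimated on $C_k$ exactly as $I_1,I_2,I_3$ in Lemma~\ref{LemmaCMO1}, contributing $\omega(B_{2^k})^{1/r_1}\simeq 2^{k(Q+\gamma)/r_1}$ times, respectively, $\|b\|_{CMO^{r_1}_\omega}$, $\Psi(t)\|b\|_{CMO^{r_1}_\omega}$, and $\|b\|_{CMO^{r_1}_\omega}$, whose sum produces the factor $(2+\Psi(t))$.

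Next I exploit the Morrey--Herz layer bound $\|f\chi_j\|_{L^{q_1}_\omega(\mathbb H^n)}\leq 2^{j(\lambda-\alpha_1)}\|f\|_{MK^{\alpha_1,\lambda}_{p,q_1,\omega}(\mathbb H^n)}$, valid for every $j\in\mathbb Z$, specialised to $j\in\{k+\ell-1,k+\ell\}$. Since $2^\ell\simeq t^{-1}$, this yields
$$
\|f\chi_{k+\ell-1}\|_{L^{q_1}_\omega}+\|f\chi_{k+\ell}\|_{L^{q_1}_\omega}\lesssim t^{\alpha_1-\lambda}\,2^{k(\lambda-\alpha_1)}\|f\|_{MK^{\alpha_1,\lambda}_{p,q_1,\omega}(\mathbb H^n)}.
$$
Plugging this into the annular estimate, the defining identity $\alpha_1=\alpha+(Q+\gamma)/r_1$ gives the decisive algebraic cancellation
$$
\tfrac{Q+\gamma}{r_1}+(\lambda-\alpha_1)=\lambda-\alpha,
$$
which collapses the $k$-exponents and leaves exactly the integral $\mathcal C_{11}$ in the variable $t$, so that
$$
\|{\mathcal H}^b_{\Phi,\Omega}(f)\chi_k\|_{L^q_\omega(\mathbb H^n)}\lesssim \mathcal C_{11}\,\|\Omega\|_{L^{q'}(S_{Q-1})}\|b\|_{CMO^{r_1}_\omega(\mathbb H^n)}\|f\|_{MK^{\alpha_1,\lambda}_{p,q_1,\omega}(\mathbb H^n)}\cdot 2^{k(\lambda-\alpha)}.
$$

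Finally I insert this layer bound into the definition of $\|{\mathcal H}^b_{\Phi,\Omega}(f)\|_{MK^{\alpha,\lambda}_{p,q,\omega}(\mathbb H^n)}$. The combined weight in the sum is $2^{k\alpha p}\cdot 2^{k(\lambda-\alpha)p}=2^{k\lambda p}$, and since $\lambda>0$ the geometric sum $\sum_{k\leq k_0}2^{k\lambda p}$ is dominated by a constant times $2^{k_0\lambda p}$, which exactly cancels the $2^{-k_0\lambda}$ prefactor uniformly in $k_0$, producing the stated bound. The main obstacle will be Step~1: tracking the three CMO pieces on the \emph{annulus} $C_k$ instead of a ball, in particular using the polar-coordinate trick to reduce the $I_3$-type integral over $C_k$ to one over $\delta_{t^{-1}}C_k\subset B_{2^{k+\ell}}$, on which one can legitimately apply the CMO norm with centre $B_{2^{k+\ell}}$ together with the bound $|b_{B_{2^k}}-b_{B_{2^{k+\ell}}}|\lesssim\Psi(t)\|b\|_{CMO^{r_1}_\omega}$ already derived inside Lemma~\ref{LemmaCMO1}. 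Once this annular estimate is secured, everything downstream is exponent bookkeeping and a convergent geometric series.
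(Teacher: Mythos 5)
Your proposal is correct and follows essentially the same route as the paper: the paper likewise combines Lemma~\ref{LemmaCMO1} with the annulus-restriction argument of (\ref{HfMHerzchik}) to obtain exactly your layer estimate (its inequality (\ref{LqLq1CMO})), then applies the bound $\|f\chi_j\|_{L^{q_1}_\omega}\leq 2^{j(\lambda-\alpha_1)}\|f\|_{MK^{\alpha_1,\lambda}_{p,q_1,\omega}}$ with $2^{\ell}\simeq t^{-1}$, uses $\alpha_1=\alpha+\frac{Q+\gamma}{r_1}$ for the same exponent cancellation yielding $\mathcal C_{11}$, and closes with the geometric sum $\sum_{k\leq k_0}2^{k\lambda p}\lesssim 2^{k_0\lambda p}$ for $\lambda>0$. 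Your Step~1, spelling out the three-piece CMO decomposition on the annulus (with $B_{2^{k+\ell}}$ in place of $B_{t^{-1}2^k}$, harmless since the radii are comparable), is merely a more explicit version of what the paper compresses into the phrase ``by making Lemma \ref{LemmaCMO1} and estimating as (\ref{HfMHerzchik})''.
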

\begin{proof}
By making Lemma \ref{LemmaCMO1} and estimating as (\ref{HfMHerzchik}) above, we get
\begin{align}\label{LqLq1CMO}
\|{\mathcal H}^{b}_{\Phi,\Omega}(f)\chi_k\|_{L^q_\omega(\mathbb H^n)}&\lesssim\|\Omega\|_{L^{q'}(S_{Q-1})}\|b\|_{CMO^{r_1}_\omega(\mathbb H^n)}\Big(\int_{0}^{\infty}\dfrac{\Phi(t)}{t^{1-\frac{Q+\gamma}{q_1}}}(2+\Psi(t))2^{\frac{k(Q+\gamma)}{r_1}}\times\nonumber
\\
&\,\,\,\,\,\,\,\times\Big(\|f\chi_{k+\ell-1}\|_{L^{q_1}_\omega(\mathbb H^n)}+\|f\chi_{k+\ell}\|_{L^{q_1}_\omega(\mathbb H^n)}\Big)dt\Big)\,\,\textit{\rm for all}\,k\in\mathbb Z.
\end{align}
Here $\ell=\ell(t)\in\mathbb Z$ such that $2^{\ell(t)-1}< t^{-1}\leq 2^{\ell(t)}$. By (\ref{esLqMH}), we have
\begin{align}
\Big(\|f\chi_{k+\ell-1}\|_{L^{q_1}_\omega(\mathbb H^n)}+\|f\chi_{k+\ell}\|_{L^{q_1}_\omega(\mathbb H^n)}\Big)&\lesssim 2^{(k+\ell)(\lambda-\alpha_1)}\|f\|_{{MK}^{\alpha_1,\lambda}_{p, q_1, \omega}(\mathbb H^n)}\nonumber
\\
&\lesssim t^{\alpha_1-\lambda}2^{k(\lambda-\alpha_1)}\|f\|_{{MK}^{\alpha_1,\lambda}_{p, q_1, \omega}(\mathbb H^n)}.\nonumber
\end{align}
From this, by (\ref{LqLq1CMO}) and definition of $\alpha_1$, we lead to
\begin{align}
\|{\mathcal H}^b_{\Phi,\Omega}(f)\chi_k\|_{L^q_\omega(\mathbb H^n)}
&\lesssim\|\Omega\|_{L^{q'}(S_{Q-1})}\|b\|_{CMO^{r_1}_\omega(\mathbb H^n)}\Big(\int_{0}^{\infty}\dfrac{\Phi(t)}{t^{1-\frac{Q+\gamma}{q_1}-\alpha_1+\lambda}}(2+\Psi(t))dt\Big)\times\nonumber
\\
&\,\,\,\,\,\,\,\times 2^{k(\lambda-\alpha)}\|f\|_{{MK}^{\alpha_1,\lambda}_{p, q_1, \omega}(\mathbb H^n)}.\nonumber
\end{align}
Hence, by $\lambda>0$, one has
\begin{align}
&\|{\mathcal H^b}_{\Phi,\Omega}(f)\|_{{MK}^{\alpha,\lambda}_{p, q, \omega}(\mathbb H^n)}=\mathop{\rm sup}\limits_{k_0\in\mathbb Z}2^{-k_0\lambda}\Big(\sum\limits_{k=-\infty}^{k_0}2^{k\alpha p}\|{\mathcal H}^b_{\Phi,\Omega}(f)\chi_k\|^p_{L^q_\omega(\mathbb H^n)}\Big)^{1/p}\nonumber
\\
&\,\,\,\lesssim \mathcal C_{11}.\|\Omega\|_{L^{q'}(S_{Q-1})} \|b\|_{CMO^{r_1}_\omega(\mathbb H^n)}\Big\{\mathop{\rm sup}\limits_{k_0\in\mathbb Z}2^{-k_0\lambda}\Big(\sum\limits_{k=-\infty}^{k_0}2^{k\alpha p}2^{k(\lambda-\alpha)p}\Big)^{1/p}\Big\}\|f\|_{{MK}^{\alpha_1,\lambda}_{p, q_1, \omega}(\mathbb H^n)}\nonumber
\\
&\,\,\,\lesssim \mathcal C_{11}\|\Omega\|_{L^{q'}(S_{Q-1})}\|b\|_{CMO^{r_1}_\omega(\mathbb H^n)}.\|f\|_{{MK}^{\alpha_1,\lambda}_{p, q_1, \omega}(\mathbb H^n)},\nonumber
\end{align}
which implies that ${\mathcal H}^b_{\Phi,\Omega}$ is  a bounded operator from ${MK}^{\alpha_1,\lambda}_{p, q_1, \omega}(\mathbb H^n)$ to ${MK}^{\alpha,\lambda}_{p, q, \omega}(\mathbb H^n)$. Therefore, the theorem is completely proved.
\end{proof}

\end{document}